\documentclass[11pt,reqno]{amsart}

\usepackage{amsmath}
\usepackage{amsfonts}
\usepackage{amssymb}

\usepackage{amsthm}
\usepackage{mathrsfs}
\usepackage{latexsym}

\usepackage{cite} 


\usepackage{esint}

\numberwithin{equation}{section}

\usepackage{graphicx,subfigure}





\usepackage{ifthen} 

\provideboolean{shownotes} 
\setboolean{shownotes}{true} 
\newcommand{\margnote}[1]{
\ifthenelse{\boolean{shownotes}}%
{\marginpar{\raggedright\tiny\texttt{#1}}}%
{}%
}

\newcommand{\hole}[1]{
\ifthenelse{\boolean{shownotes}}%
{\begin{center} \fbox{ \rule {.25cm}{0cm}
\rule[-.1cm]{0cm}{.4cm} \parbox{.85\textwidth}{\begin{center}
\texttt{#1}\end{center}} \rule {.25cm}{0cm}}\end{center}}
{}
}

\usepackage[colorlinks=true,urlcolor=blue,
citecolor=red,linkcolor=blue,linktocpage,pdfpagelabels,
bookmarksnumbered,bookmarksopen]{hyperref}



\usepackage{fullpage}


%
%





\theoremstyle{plain}

\newtheorem{lemma}{Lemma}[section]
\newtheorem{theorem}[lemma]{Theorem}
\newtheorem{proposition}[lemma]{Proposition}
\newtheorem{corollary}[lemma]{Corollary}

\theoremstyle{definition}
\newtheorem{remark}[lemma]{Remark}
\newtheorem{definition}[lemma]{Definition}

\theoremstyle{remark}




\newcommand{\vertiii}[1]{{\left\vert\kern-0.25ex\left\vert\kern-0.25ex\left\vert #1 
    \right\vert\kern-0.25ex\right\vert\kern-0.25ex\right\vert}}

\newcommand{\R}{\mathbb{R}}

\newcommand{\C}{\mathbb{C}}
\newcommand{\Z}{\mathbb{Z}}
\newcommand{\N}{\mathbb{N}}

\newcommand{\vphi}{\boldsymbol{\phi}}
\newcommand{\vg}{\boldsymbol{g}}
\newcommand{\vh}{\boldsymbol{h}}
\newcommand{\vw}{\boldsymbol{w}}
\newcommand{\vu}{\boldsymbol{u}}
\newcommand{\vq}{\boldsymbol{q}}
\newcommand{\vx}{\boldsymbol{x}}

\newcommand{\DD}{\mathbf{D}}
\newcommand{\II}{\mathbf{I}}
\newcommand{\TT}{\mathbf{T}}
\newcommand{\GG}{\mathbf{G}}
\newcommand{\KK}{\mathbf{K}}
\newcommand{\tTT}{\widetilde{\mathbf{T}}}
\newcommand{\bU}{\overline{U}}
\newcommand{\bV}{\overline{V}}

\newcommand{\cC}{\check{C}}

\newcommand{\tiK}{\widetilde{K}}

\newcommand{\tiA}{\widetilde{A}}
\newcommand{\tiB}{\widetilde{B}}
\newcommand{\tiV}{\widetilde{V}}
\newcommand{\tiN}{\widetilde{N}}
\newcommand{\tiW}{\widetilde{W}}

\newcommand{\cL}{{\mathcal{L}}}
\newcommand{\cD}{{\mathcal{D}}}

\newcommand{\cA}{{\mathcal{A}}}

\newcommand{\cU}{{\mathcal{U}}}
\newcommand{\cV}{{\mathcal{V}}}
\newcommand{\cE}{{\mathcal{E}}}
\newcommand{\cZ}{{\mathcal{Z}}}

\newcommand{\vep}{\varepsilon}

\renewcommand{\Re}{\mathrm{Re}\,} 
\renewcommand{\Im}{\mathrm{Im}\,}
\newcommand{\tr}{\mathrm{tr}\,}

\newcommand{\bu}{\overline{u}}

\newcommand{\bmu}{\overline{\mu}}
\newcommand{\bkp}{\overline{\kappa}}

\newcommand{\bM}{\overline{M}}
\newcommand{\bm}{\overline{m}}

\newcommand{\brho}{\overline{\rho}}
\newcommand{\bthe}{\overline{\theta}}
\newcommand{\balp}{\overline{\alpha}}
\newcommand{\bp}{\overline{p}}
\newcommand{\bc}{\overline{c}}
\newcommand{\be}{\overline{e}}
\newcommand{\bk}{\overline{k}}

\newcommand{\hW}{\widehat{W}}

\newcommand{\hV}{\widehat{V}}

%


\DeclareMathOperator{\Hess}{Hess}

\newcommand{\ep}{\epsilon}

\newcommand{\<}{\langle}
\renewcommand{\>}{\rangle}



\begin{document}

\title[Global decay of heat conducting Korteweg compressible fluids]{Global decay of perturbations of equilibrium states for one-dimensional heat conducting compressible fluids of Korteweg type}

\author[R. G. Plaza]{Ram\'on G. Plaza}

\address{{\rm (R. G. Plaza)} Instituto de 
Investigaciones en Matem\'aticas Aplicadas y en Sistemas\\Universidad Nacional Aut\'onoma de 
M\'exico\\ Circuito Escolar s/n, Ciudad Universitaria, C.P. 04510\\Cd. de M\'{e}xico (Mexico)}

\email{plaza@mym.iimas.unam.mx}

\author[J. M. Valdovinos]{Jos\'{e} M. Valdovinos}

\address{{\rm (J. M. Valdovinos)} Instituto de 
Investigaciones en Matem\'aticas Aplicadas y en Sistemas\\Universidad Nacional Aut\'onoma de 
M\'exico\\ Circuito Escolar s/n, Ciudad Universitaria, C.P. 04510\\Cd. de M\'{e}xico (Mexico)}

\email{valdovinos94@comunidad.unam.mx}

\begin{abstract}
This paper studies the one dimensional Navier-Stokes-Fourier-Korteweg system of equations describing the evolution of a heat-conducting compressible fluid that exhibits viscosity and capillarity. The main goal of the present analysis is to examine the dissipative structure of the system and to prove the global existence and the asymptotic decay of perturbations of equilibrium states. For that purpose, a novel nonlinear change of perturbed state variables, which takes into account that the conserved quantities contain density gradients, is introduced. These new perturbation variables satisfy a partially symmetric system whose linearization fulfills the generalized genuine coupling condition of Humpherys \cite{Hu05} for higher order systems. It is shown that the linearized system is symbol symmetrizable and an appropriate compensating matrix is constructed. This procedure allows to obtain linear decay rates which underlie a dissipative mechanism of regularity-gain type. This linear dissipative structure implies, in turn, the global decay of small perturbations to constant equilibrium states as solutions to the full nonlinear system.
\end{abstract}

\keywords{Navier-Stokes-Fourier-Korteweg compressible fluids, dissipative structure, global decay, genuine coupling.}

\subjclass[2020]{35Q35, 35B35, 35B40, 35G20}

\maketitle

\setcounter{tocdepth}{1}



\section{Introduction}

In this paper we study the one dimensional Navier-Stokes-Fourier-Korteweg (NSFK) system of equations, which describes the evolution of a heat-conducting compressible fluid that exhibits viscosity and capillarity on an infinite one-dimen\-sional spatial domain. In Eulerian coordinates, the system under consideration reads (cf. \cite{Hasp09,CHZ17,DS85,HaLi96b}),
\begin{equation}
\label{NSFK}
\begin{aligned}
        \rho_t + (\rho u)_x &= 0,   \\
        (\rho u)_t + \big(\rho u^{2}+ p \big)_x &= \big( \mu u_{x} + K \big)_x, \\
        \big(\rho  \varepsilon + \tfrac{1}{2}\rho u^{2} \big)_t + \big( \rho u \big( \varepsilon + \tfrac{1}{2}u^2 \big) + pu \big)_x &= \big( \alpha \theta_{x} + \mu u u_{x} + uK + w \big)_x.       
\end{aligned} 
\end{equation}

Here $x \in \R$ and $t > 0$ denote space and time coordinates, respectively. The unknown functions are the mass density $\rho$, the velocity $u$ and the absolute temperature $\theta$. According to custom, $p$ denotes the thermodynamic pressure function, and $\varepsilon$ is the internal energy density (per unit mass) of the fluid. The Korteweg stress tensor, $K$, and the interstitial work flux, $w$, are given by
\begin{equation}
\label{defKw}
\begin{aligned}
K &= k \rho \rho_{xx} + \rho k_{x}\rho_{x} - \tfrac{1}{2}k_{\rho}\rho \rho_{x}^2 - \tfrac{1}{2} k\rho_{x}^2, \quad \text{and,}\\
w &= -k \rho \rho_{x} u_{x},
\end{aligned}
\end{equation}
respectively. Here the viscosity coefficient $\mu$, the thermal conductivity $\alpha$ and the capillarity coefficient $k$ are strictly positive smooth functions of the state variables $\rho$ and $\theta$.

System \eqref{NSFK} is the one dimensional version of the system rigorously derived by Dunn and Serrin \cite{DS85} under the framework of Rational Mechanics to account for compressible fluids endowed with internal capillarity. The model originated in the early XXth century when the Dutch physicist D. J. Korteweg proposed a form for the Cauchy stress tensor depending on variations of the mass density of the fluid (cf. Korteweg \cite{Kortw1901}). Korteweg's formulation was, however, incompatible with the Clasius-Dulhem inequality that expresses the Second Law of Thermodynamics and demands non-negative energy dissipation, which is the rate of internal entropy production per unit volume times the absolute temperature. Dunn and Serrin circumvented this problem with the introduction of the interstitial work flux into the energy equation, which accounts for an additional supply of mechanical energy (for details see \cite{DS85} or Section \ref{secKortew} below). It is important to emphasize that, in this framework, Korteweg-type models are based on an extended version of \emph{non-equilibrium thermodynamics} which assumes that the energy of the fluid not only depends on the standard thermodynamic variables but also on the gradient of the density. In the system \eqref{NSFK} under consideration, this fact is reflected in the particular ``non-standard'' form of the internal energy of the fluid,
\[
\vep = \vep(\rho, \theta,\rho_x) = e(\rho, \theta) + \tfrac{1}{2} \big(k(\rho, \theta) - \theta k_\theta(\rho, \theta) \big) \rho_x^2/\rho,
\]
where the function $e = e(\rho, \theta)$ is a ``standard'' internal energy from equilibrium thermodynamics and $k = k(\rho, \theta)$ is the capillarity coefficient appearing in \eqref{defKw} (see Section \ref{secKortew} below for a justification of this expression). Therefore, the vector of conserved quantities (namely, mass, momentum and total energy) depends upon density gradients as well, through the non-standard term in the internal energy. Surprisingly, many serious mathematical analyses of the non-isothermal Korteweg model (see, for example, \cite{ChXi13,WaYa14,ChZha14,ZhTa14,SSZh22,HYZ17}) overlook this important physical feature by assuming the internal energy to be in equilibrium form, that is, by taking $k \equiv 0$ in the expression of the internal energy (leading to $\vep = e = c_v \theta$) but keeping $k \neq 0$ in the momentum and energy equations. It is to be observed that in the isothermal version of the model, which only considers balance of mass and momentum (cf. \cite{PlV22,DD01,HaLi94}) and known as the Navier-Stokes-Korteweg (NSK) system, the conserved quantities do not depend on density gradients, even when the capillarity coefficient is switched on. Also notice that, when the capillarity coefficient is set to zero in both the expression of the internal energy and in the balance equations, then the NSFK system \eqref{NSFK} reduces to the classical Navier-Stokes-Fourier (NSF) model involving the standard thermodynamic potential for its internal energy, $e = e(\rho, \theta)$.

Since the seminal work of Dunn and Serrin, there has been a large amount of work on the mathematical analysis of models of Korteweg type reported in the literature. Many mathematical results pertain to the existence and uniqueness of strong \cite{HaLi94,HaLi96a,HaLi96b,Hasp09,Hasp16,Kot08,Kot10} and weak \cite{DD01,BrDjL03,Hasp09} solutions, as well as to the existence and stability of nonlinear elementary waves \cite{Sl83,ChXi13,CHZ15} (see also the results on the purely capillary, non-viscous model by Benzoni-Gavage \emph{et al.} \cite{BDDJ05, BDD06}). Most of these works study the isothermal version of the Korteweg system, namely the NSK model. The full, non-isothermal NSFK system has been, in contrast, less studied. Still, there is a vast literature on it, which includes the works by Hattori and Li \cite{HaLi96b}, Haspot \cite{Hasp09}, Chen \emph{et al.} \cite{ChXM14}, Freist\"uhler and Kotschote \cite{FrKo15a,FrKo17,FrKo19}, Kotschote \cite{Kot10,Kot12a}, Cai \emph{et al.} \cite{CTX15}, Chen and Zhao \cite{ChZha14}, Hou \emph{et al.} \cite{HPZ18}, Keim \emph{et al.} \cite{KMR23} and Tian \emph{et al.} \cite{TXKV15}, just to mention a few. The above list of references is by no means exhaustive, and the reader is invited to review the aforementioned works and to consult the many references therein. 

The main goal of this work is to study the existence and global decay of small perturbations of constant equilibrium states of system \eqref{NSFK}. Given an arbitrary constant state, say $\bU = (\brho, \bu, \bthe)$ with $\brho > 0$ and $\bthe > 0$, we pose the nonlinear problem for an eventual perturbation and show that these solutions exist globally in time and that they decay in appropriate function spaces. One course of action is to establish energy estimates on the perturbations directly, relying on the intrinsic form of the Korteweg system. Another approach is to examine whether the linearized system around the constant state exhibits some abstract symmetrizability and dissipative properties that can be extrapolated to the nonlinear problem. This refers to the \emph{dissipative structure} of the nonlinear system of equations \eqref{NSFK}. Therefore, our analysis follows this second approach and falls under the framework of Humpherys’s extension \cite{Hu05} to higher order systems of the classical results by Kawashima \cite{KaTh83} and Shizuta and Kawashima \cite{ShKa85,KaSh88a} for systems of hyperbolic-parabolic type. Humpherys considered generic linear systems of any order of the form,
\begin{equation}
\label{HuNSys}
U_{t} = -\sum_{k=0}^{m}D_{k}\partial_{x}^{k}U, \quad t>0, \quad x \in \R, \quad U\in \R^{n},
\end{equation}
where $D_{k}$, $k=0,1,\ldots, m$, are constant $n \times n$ matrices. After taking the Fourier transform of system \eqref{HuNSys} and splitting the symbol into odd and even order derivatives, Humpherys extended in a natural way the classical definitions of genuine coupling and strict dissipative for second order systems due to Kawashima and Shizuta, by working with Fourier symbols instead of with constant matrices. The main result in \cite{Hu05} is the equivalence between the above conditions and the existence of a compensating matrix symbol, which allows us to perform energy estimates at the linear level. Another important contribution by Humpherys is the concept of \emph{symbol symmetrizability}, which generalizes the classical definition of (termwise) symmetrizability by Friedrichs \cite{Frd54}. Actually, every symmetrizable system in the sense of Friedrichs is symbol symmetrizable, but no the other way around, as it was proved by Humpherys for the physically relevant isothermal Korteweg system in one dimension and in Lagrangian variables. The potential applications of Humpherys' results and concepts are numerous because they pertain to linear operators of any order. In a recent contribution \cite{PlV22}, we studied the dissipative structure of the isothermal Korteweg system under Humpherys' framework and were able to extend the linear decay structure to the nonlinear system in order to show the global decay of perturbations to equilibrium states. Thus, our motivation is to extrapolate these ideas to the full non-isothermal Korteweg system.

The study of the non-isothermal case presents new challenges. First, the conserved quantities depend upon density gradients. It is to be observed that one may linearize system \eqref{NSFK} around a constant state $\bU$ and examine its decay structure. Since the gradient of a constant density is zero, the linearization coincides with the standard version of the system endowed with standard thermodynamic potentials. In fact, there is a recent paper by Kawashima \emph{et al.} \cite{KSX22}, in which the authors perform a linearization of the full non-isothermal Korteweg system, even in several space dimensions, and obtain decay estimates for the solutions to the linearized equations. The authors do not employ Humpherys' characterization of symbol symmetrizability and coupling of the symbols, but handle the non-symmetric part of the equations with \emph{ad hoc} ``craftmanship'' conditions which are fulfilled by the Korteweg system under consideration. Their analysis holds at the linear level only (for details and further information, see \cite{KSX22}). To complete the study of nonlinear decay of perturbations, however, one needs to keep track of the dependence on density gradients of the conserved quantities. For the one dimensional NSFK system \eqref{NSFK}, the latter have the form
\[
F^0(U,U_x) = \begin{pmatrix} 
\rho \\ \rho u \\ \rho \vep(\rho, \theta,\rho_x) + \tfrac{1}{2} \rho u^2
\end{pmatrix},
\]
where $U = (\rho, u, \theta)$ denotes the state variables and $\vep = \vep (\rho, \theta,\rho_x)$ is the non-standard energy density. As we have already mentioned, when evaluated at a constant state $\bU$, the conserved quantities reduce to $F^0(\bU,0)$. Our analysis hinges on a novel (but quite natural) nonlinear change of perturbation variables,
\[
(U, U_x) \mapsto W = D_UF^0(\bU,0)^{-1} \big( F^0(U,U_x) - F^0(\bU,0)\big),
\]
where $D_UF^0$ denotes the Jacobian of the vector of three conserved quantities with respect to the three state variables, $U = (\rho, u, \theta)$, only. This transformation is motivated by the symmetrization procedure for the classical NSF system (wihout capillarity) performed by Kawashima and Shizuta \cite{KaSh88a}, which makes use of the notion of a convex extension for hyperbolic-parabolic systems of conservations laws (see also Chapter 4 in \cite{KaTh83}) and which constitutes an extension of the standard definition of a convex entropy/entropy flux pair introduced by Godunov \cite{Godu61a} and by Friedrichs and Lax \cite{FLa3} for hyperbolic systems. To perform the change of variables, we make use the non-standard entropy of the fluid which depends on $\rho_x$ as well. At first, this does not represent any difficulty regarding the linear terms of the new system, as the matrices used for the change of variables are evaluated at the constant equilibrium state, for which the non-standard part of all thermodynamic potentials vanish (and, hence, they coincide to those for the NSF system; for completeness, we recall the symmetrization procedure for the NSF system in Appendix \ref{ConvExtN-S}). However, one additional thermodynamic assumption is needed to handle the nonlinear part. After some calculations, one observes that the change of variables works and that the new nonlinear system for the $W$ is feasible, provided that a certain matrix field is invertible (see Proposition \ref{propgoodW} below). This requirement is fulfilled by assuming that the capillarity coefficient is a \emph{non-strictly concave function of the temperature at constant density} (see Remark \ref{remconcave} and assumption \hyperref[H4]{\rm{(H$_4$)}} below). This is a thermodynamic hypothesis that we have not found in the physical literature explicitly. It is, however, quite natural. It can be interpreted as a sufficient condition for the non-standard Hemholtz free energy density to be (strictly) concave as a function of the absolute temperature, a well-known stability requirement formulated by Hanley and Evans \cite{HaEv82} for non-equilibrium thermodynamic systems under shear (see also \cite{JCL10,LeJC08}). For convenience of the reader, in Subsection \ref{secKortew} we present a brief review of the derivation of the Korteweg system by Dunn and Serrin \cite{DS85}, which also contains a discussion on non-equilibrium thermodynamics. Even though this derivation can be found in many references (see, e.g., \cite{FrKo19,Szk20b,Szk20a}), we have included it here in order to convince the reader of the feasibility of the new thermodynamic assumption of this paper.

Under the assumption of non-strict concavity of the capillarity coefficient, the nonlinear change of variables is well defined and the new perturbed variables $W$ satisfy an equivalent nonlinear system of equations in conservation form. Moreover, it is proved that the new variables $W$ are equivalent to the perturbation, $U - \bU$, in appropriate function spaces (see Lemma \ref{lemregW}). Thanks to the convex extension procedure mentioned above, the resulting system is ``partially'' symmetric in the sense that, at the linear level, the transport and viscosity terms get symmetrized, but with a non-symmetric capillarity tensor. A significant part of our analysis is devoted to prove these features (see Section \ref{secconvexvar}). Once the nonlinear system is properly posed in the new variables, we proceed with its linearization and with the study of its decay properties. We prove, for instance, that the resulting linearized system is symbol symmetrizable, but not symmetrizable in the sense of Friedrichs (see Lemma \ref{Sym2Full} below), adding the NSFK system to the (still small) list of physically relevant models which satisfy this property. It is also shown that it fulfills the genuine coupling condition and, therefore, one may invoke the equivalence theorem by Humpherys (see Theorems 3.3 and 6.3 in \cite{Hu05} or Theorem \ref{HuThSym} below). However, as we pointed out in our previous article \cite{PlV22}, since we are working with symbols instead of with constant matrices, there is an extra degree of freedom in the choice of the compensating matrix function and, therefore, in applications it is more convenient to construct this symbol directly. We then construct an appropriate compensating matrix symbol for the NSFK system which is endowed with some extra properties which cannot be deduced from the equivalence theorem (see Lemma \ref{lemourK}). As a consequence of such construction, we are able to prove a linear decay structure of \emph{regularity-gain type}, yielding optimal pointwise decay rates of the solutions to the linear system in Fourier space (see \cite{UDK12, UDK18} or Remark \ref{remtypediss} below). These pointwise energy estimates imply, in turn, the decay rate for the semigroup associated to solutions of the linear system. Relying on the existing local well-posedness theory for the full NSFK model by Hattori and Li \cite{HaLi96b} (see also \cite{CHZ17}), we then combine the linear estimates with a priori nonlinear estimates in order to prove the global existence and the decay of small perturbations to equilibria for the full nonlinear system, via a standard continuation argument. The particular form of the equations in the new variables $W$ plays a fundamental role to close the nonlinear estimate and to obtain the appropriate regularity in the density variable. Our analysis culminates into the Main Theorem \ref{thmgloex} below.

There exist many results on the global existence and optimal decay of perturbations to constant equilibrium states for the isothermal NSK model, even in several space dimensions (see, e.g., \cite{WaTa11, TWX12,TZh14,GaZoYa15,GLZh20}). In these works, the authors perform energy estimates directly and exploit the intrinsic form of the system of equations. Our isothermal analysis in \cite{PlV22}, in contrast, yields similar results by studying the strict dissipativity of the system. Up to our knowledge, there exist only two results in the literature which study the stability of constant equilibrium states in the non-isothermal case (see \cite{SSZh22,ZhTa14}). These works, however, either do not consider non-equilibrium (non-standard) energy densities or do not add the interstitial work flux to the energy equation. Thus, as far as we know, ours is the first result on global existence and decay of perturbations of constant equilibrium states for the full non-isothermal nonlinear Korteweg system derived by Dunn and Serrin. More importantly, the present work applies the linear formulation and definitions by Humpherys \cite{Hu05}, showing how to extend the linear decay analysis to the nonlinear problem even in the presence of density gradients inside the conserved quantities. In other words, we consider our contributions not only novel for the analysis of Korteweg fluids, but also of methodological nature, with prospects for the study of larger systems or with higher order derivatives.

\subsection*{Plan of the paper}

Section \ref{secprel} contains preliminary material. In Subsection \ref{secKortew} we review the derivation of the Korteweg model by Dunn and Serrin \cite{DS85} and enumerate the main hypotheses of our analysis (assumptions \hyperref[H1]{\rm{(H$_1$)}} - \hyperref[H4]{\rm{(H$_4$)}} below). In Subsection \ref{Well-poss-Sec} we recall the local well-posedness theory for the non-isothermal Korteweg system, paying particular attention to the works by Hattori and Li \cite{HaLi96b} and by Chen \emph{et al.} \cite{CHZ17}, which establish the existence and uniqueness of classical solutions. The central Section \ref{secconvexvar} recasts the nonlinear system \eqref{NSFK} as a partially symmetric system in terms of new perturbation variables. The latter transformation works thanks to the thermodynamic assumption \hyperref[H4]{\rm{(H$_4$)}}. In Section \ref{seclindecay} we examine the linear part of this system and show that it satisfies the genuine coupling condition and the symbol symmetrizability in the sense of Humpherys. A suitable compensating matrix symbol, which conveys a dissipative mechanism of regularity-gain type, is constructed. This symbol is employed to obtain linear decay estimates for the associated semigroup. Finally, in Section \ref{secglobal} we use the previous information to prove the global decay of small perturbations to constant equilibrium states of the full nonlinear system. 

\subsection*{On notation} 
Standard Sobolev spaces of functions on the real line will be denoted as $H^s(\R)$, with $s \in \R$, endowed with the standard inner products and norms. The norm on $H^s(\R)$ will be denoted as $\| \cdot \|_s$ and the norm in $L^2$ will be henceforth denoted by $\| \cdot \|_0$. Any other $L^p$ -norm will be denoted as $\| \cdot \|_{L^p}$ for $p \geq 1$. We denote the real and imaginary parts of a complex number $\lambda \in \C$ by $\Re\lambda$ and $\Im\lambda$, respectively, as well as complex conjugation by ${\lambda}^*$. The standard inner product of two vectors $a$ and $b$ in $\C^n$ will be denoted by $\langle a,b \rangle = \sum_{j=1}^n a_j b_j^*$. Complex transposition of matrices are indicated by the symbol $A^*$, whereas simple transposition is denoted by the symbol $A^\top$. 

\section{Preliminaries}
\label{secprel}

\subsection{Heat conducting compressible fluids of Korteweg type}
\label{secKortew}

In order to model capillarity effects in fluids and based on an original idea by van der Waals \cite{vdW1894}, in 1901 the Dutch physicist D. J. Korteweg \cite{Kortw1901} proposed a constitutive equation for the Cauchy stress that includes density gradients. Korteweg's formulation was, however, incompatible with classical thermodynamics. In their seminal work, Dunn and Serrin \cite{DS85} circumvented this difficulty by introducing the concept of interstitial work into the balance of energy equation as an additional supply of mechanical energy, and the model that they derived is the one that we now review.

In Eulerian coordinates and in $d$ space dimensions, $d \geq 1$, the general system of equations expressing conservation of mass, conservation of momentum, balance of energy and the Clausius-Dulhem inequality for a compressible fluid has the form (cf. \cite{DS85}),
\begin{equation}
\label{gensystfluid}
\begin{aligned}
\rho_t + \nabla \cdot ( \rho \vu) &= 0,\\
(\rho \vu)_t + \nabla \cdot (\rho \vu \otimes \vu) &= \nabla \cdot \TT,\\
\big( \rho ( \vep + \tfrac{1}{2} |\vu|^2) \big)_t + \nabla \cdot \big( \rho ( \vep + \tfrac{1}{2} |\vu|^2) \vu ) &= - \nabla \cdot \vq + \nabla \cdot (\TT \vu),\\
\rho \theta \frac{Ds}{Dt} + \nabla \cdot \vq - \frac{\vq \cdot \nabla \theta}{\theta} &\geq 0,
\end{aligned}
\end{equation}
where
\[
\frac{D}{Dt} := \partial_t + \vu \cdot \nabla,
\]
denotes the material derivative operator, and
\begin{eqnarray*}
\rho = \rho(\vx,t) \in \R &  &\text{is the mass density (per unit volume) of the fluid,}\\
\vu = \vu(\vx,t) \in \R^d & &\text{is the velocity field of the fluid,}\\
\theta = \theta(\vx,t) > 0 & &\text{is the absolute temperature of the fluid,}\\
\vep > 0 & & \text{is the specific internal energy density (per unit mass),}\\
s \in \R & & \text{is the specific entropy (per unit mass),}\\
\vq \in \R^d & & \text{is the heat flux vector, and}\\
\TT \in \R^{d \times d} & & \text{is the Cauchy stress tensor.}
\end{eqnarray*}
Here, $\vx \in \R^d$ and $t > 0$ denote space and time variables. In the standard NSF model, the Cauchy stress tensor has the form $\TT = - p \II + \widetilde{\lambda} \, (\tr \DD) \II + 2 \widetilde{\nu} \, \DD$, where $\DD = \tfrac{1}{2} (\nabla \vu + \nabla \vu^\top)$ is the deformation tensor; $\tr(\cdot)$ is the trace operator; $\widetilde{\lambda}$ and $\widetilde{\nu}$ are the usual bulk and shear viscosity coefficients (depending on $\rho$ and $\theta$), respectively, which satisfy $\widetilde{\lambda} + 2 \widetilde{\nu} > 0$; $p$ is the pressure; and, $\II$ is the identity $d \times d$-dimensional tensor. Korteweg's original proposal \cite{Kortw1901} was to consider a Cauchy stress tensor depending upon higher order derivatives of the density, of the form
\[
\TT = - p \II + \widetilde{\lambda} \, (\tr \DD)\II + 2\widetilde{\nu} \, \DD + (\delta_1 \Delta \rho + \delta_2 |\nabla \rho|^{2}) \II + \delta_3 \nabla \rho \otimes \nabla \rho + \delta_4 \Hess( \rho ),
\]
where the coefficients $\delta_j$, $j = 1, \ldots, 4$, are material functions of the thermodynamic variables $\rho$ and $\theta$ and $\Hess (\cdot)$ denotes the Hessian square matrix of second-order partial derivatives of any scalar field. Applying standard arguments by Gurtin \cite{Gurt65} and Eringen \cite{Erng66}, Dunn and Serrin showed that the Clausius-Duhem inequality in \eqref{gensystfluid} forces the quantities $\vep$, $s$ and $\TT$ to depend on at most $\theta$ and $\rho$, yielding necessarily that $\delta_j \equiv 0$ for all $j$ (see \cite{DS85} for details). Whence, Dunn and Serrin proposed that the classical form of the balance laws of momenta be retained, while the energy balance be enlarged to incorporate long range effects in the medium through the concept of \emph{interstitial work flux}, denoted as $\vw$. The new system now reads,
\begin{equation}
\label{gensystfluid2}
\begin{aligned}
\rho_t + \nabla \cdot ( \rho \vu) &= 0,\\
(\rho \vu)_t + \nabla \cdot (\rho \vu \otimes \vu) &= \nabla \cdot \TT,\\
\big( \rho ( \vep + \tfrac{1}{2} |\vu|^2) \big)_t + \nabla \cdot \big( \rho ( \vep + \tfrac{1}{2} |\vu|^2) \vu ) &= - \nabla \cdot \vq + \nabla \cdot (\TT \vu) + \nabla \cdot \vw,\\
\rho \theta \frac{Ds}{Dt} + \nabla \cdot \vq - \frac{\vq \cdot \nabla \theta}{\theta} &\geq 0.
\end{aligned}
\end{equation}

The interstitial work flux is compatible with an extended principle of non-equilibrium thermodynamics that regards capillarity as a long range molecular interaction which penalizes high variations of the density (cf. \cite{Se83,Se81,ChHa11,ChDX21}). In terms of the Helmholtz free energy,
\[
\Psi = \vep - \theta s,
\]
for instance, this principle takes the form of a generalized volumetric Gibbs-Duhem relation (see, e.g., Jou \emph{et al.} \cite{JCL10}; see also \cite{HaEv82,Callen-2e}),
\[
d\Psi = - s d\theta + \frac{1}{\rho} dp + \frac{1}{\rho} \vphi \cdot \vg,
\]
where, in the additional term, $\vg$ stands for $\nabla \rho$ and $\vphi$ is an abstract potential. As a result, the free energy of the fluid not only depends on the standard variables but also on their higher order derivatives. In order to obtain the expression of the corresponding (capillarity) stress tensor, which we shall denote as $\tTT$, Dunn and Serrin assume a constitutive relation for the Helmholtz free energy of the form $\Psi = \Psi (\rho, \theta, \vg, \vh, \GG)$, with $\vg = \nabla \rho$, $\vh = \nabla \theta$ and $\GG = \Hess(\rho)$, as well as analogous relations for $s$, $\tTT$ and $\vw$. Upon application of fundamental physical principles and the Clausius-Duhem inequality, Dunn and Serrin proved that these response functions can depend at most on $\rho$, $\theta$ and $\vg = \nabla \rho$, and that the following relations must be observed,
\begin{equation}
\label{exprabc}
\begin{aligned}
s &= - \Psi_\theta,\\
\tTT &= -\rho (\nabla \rho \otimes \nabla_{\vg} \Psi) +  \big( \rho (\nabla \cdot (\rho  \nabla_{\vg} \Psi)) - \rho^2 \Psi_\rho\big) \II,\\
\vw &= \rho \frac{D\rho}{Dt} \vphi.
\end{aligned}
\end{equation}
Moreover, the objectivity of thermodynamic potentials (cf. Gurtin \cite{Gurt81}) demands that these physical quantities depend on $\nabla \rho$ only through $|\nabla \rho|^2$. For details, see Dunn and Serrin \cite{DS85} or Appendix A in \cite{Szk20a}. Hence, Dunn and Serrin proposed the potential $\vphi$ to have the form,
\begin{equation}
\label{choicepot}
\vphi = 2 \kappa \vg,
\end{equation}
with $\kappa = \kappa(\rho,\theta) > 0$ being a \emph{capillarity (or diffuse interface) coefficient}, depending on both $\rho$ and $\theta$, and reflecting the shear-rate dependence of the thermodynamic potential (see Jou \emph{et al.} \cite{JCL10}, p. 186). Consequently, the Helmholtz free energy is decomposed as
\begin{equation}
\label{Helmfe}
\Psi (\rho,\theta,\vg) = \psi(\rho,\theta) + \kappa(\rho,\theta) |\vg|^2 = \psi(\rho,\theta) + \kappa(\rho,\theta) |\nabla \rho|^2,
\end{equation}
where the function $\psi = \psi(\rho,\theta)$ represents the standard part of the free energy (that is, the usual Helmholtz free energy of equilibrium thermodynamics), and the dependence on the gradient is encoded in the non-standard term, $\kappa |\nabla \rho|^2$. In this fashion, the fluid is completely specified by its free energy $\Psi$, from which its internal energy $\vep$ derives via the Legendre transform, $\vep = \Psi + \theta s$, with temperature $\theta$ and specific entropy $s = - \Psi_\theta$ as dual variables. This yields similar decompositions for $\vep$ and $s$, namely,
\begin{subequations}
\label{TherRel}
\begin{align}
s(\rho, \theta, \vg) &= - \Psi_\theta = - \psi_\theta (\rho, \theta) - \kappa_\theta (\rho, \theta) |\vg|^2 =: \eta - \kappa_\theta |\nabla \rho|^2, \label{TherRelEnt}\\
\vep(\rho, \theta, \vg) &= \Psi + \theta s = \psi(\rho,\theta) - \theta \psi_\theta (\rho, \theta) + (\kappa (\rho, \theta) - \theta \kappa_\theta (\rho, \theta)) |\vg|^2 =: e + (\kappa - \theta \kappa_\theta)|\nabla \rho|^2, \label{TherRelEne}
\end{align}
\end{subequations}
where
\[
\begin{aligned}
\eta = \eta(\rho, \theta) &= - \psi_\theta(\rho,\theta),\\
e = e(\rho,\theta) &= \psi(\rho,\theta) - \theta \psi_\theta(\rho,\theta),
\end{aligned}
\]
denote the standard specific entropy and the standard specific internal energy, respectively. The pressure is then defined as
\begin{equation}
\label{defp}
p = p(\rho,\theta) = \rho^2 \psi_\rho(\rho,\theta) = \rho^2 \Psi_\rho(\rho,\theta,\boldsymbol{0}).
\end{equation}

As in \cite{DS85}, here we assume that the thermodynamic potentials involved in the standard part of the Helmholtz free energy satisfy the classical relations of equilibrium thermodynamics, starting with the First Law,
\begin{equation}
\label{StdTherRel}
de = \theta d\eta -p d\left(\frac{1}{\rho} \right), 
\end{equation}
together with the relations,
\[
e_\theta >0, \quad p_\rho > 0, \quad e_\rho = \frac{1}{\rho^2} (p - \theta p_\theta), \quad \eta_\theta = \frac{e_\theta}{\theta}, \quad \eta_\rho = - \frac{p_\theta}{\rho^2}.
\]

\begin{remark}
\label{remideal}
A typical example of a standard energy density is the case of a polytropic ideal gas, with equation of state given by $p(\rho, \theta) = R \rho \theta$, where $R > 0$ is the ideal gas constant. Under the calorically perfect gas approximation, this yields an internal energy density of the form $e(\rho, \theta) = R(\gamma -1)^{-1} \theta \equiv c_v \theta$, where $\gamma > 1$ is the (constant) adiabatic index (ratio of specific heats), $\gamma = c_p / c_v$, with $c_p$ and $c_v$ being the (in this case, constant) specific heat at constant pressure and volume, respectively. Here $c_v = (\partial e / \partial \theta) |_\rho = R(\gamma-1)^{-1}$ and Mayer's relation holds, $c_p - c_v = R$. Thus, the ideal gas law can be expressed as $p = (\gamma-1)\rho e$ (cf. \cite{Callen-2e,Pipp57}). In this case, the standard Helmholtz free energy can be taken as $\psi(\rho,\theta) = R \theta \big( \log \rho - (\gamma -1)^{-1} \log \theta\big)$, which satisfies the above relations as the reader may easily verify.
\end{remark}

\begin{remark}
\label{remconcave}
From classical equilibrium thermodynamics, it is known that the standard Helmhotz free energy is a strictly concave function of the temperature at constant volume (or equivalently, at constant density),
\[
\psi_{\theta \theta} = - \eta_\theta = - \frac{e_\theta}{\theta} < 0.
\]
This condition is tantamount to the specific heat at constant volume to be positive, $c_v(\rho,\theta) = \theta \eta_\theta = e_\theta > 0$, which is a necessary condition for an equilibrium state to be stable against spatial fluctuations (see, e.g., Lebon \emph{et al.} \cite{LeJC08}, p. 27; see also \cite{Callen-2e,Pipp57}). In the case of non-equilibrium thermodynamics, a similar condition was formulated by Hanley and Evans \cite{HaEv82} for thermodynamic systems under shear (such as Korteweg fluids). Using the generalized Gibbs relation, Hanley and Evans arrive at the (non-equilibrium) thermal stability condition
\[
C_v(\rho,\theta,\nabla \rho) = \theta \Big(\frac{\partial s}{\partial \theta}\Big)\Big|_{\rho} \geq 0
\]
(see Hanley and Evans \cite{HaEv82}, p. 3227; see also Jou \emph{et al.} \cite{JCL10}, \S 3.4), where $C_v = C_v(\rho,\theta,\vg)$ denotes the non-equilibrium specific heat. Therefore, from \eqref{Helmfe} and \eqref{TherRelEnt}, we obtain
\[
C_v = - \theta \Big( \psi_{\theta \theta} + \kappa_{\theta \theta} |\nabla \rho|^2 \Big) \geq 0.
\]
In order to fulfill this condition for any value of $|\nabla \rho|$ and in view of $\psi_{\theta \theta} < 0$, in this paper we assume that the capillarity coefficient is non-strictly concave with respect of the temperature, 
\[
\kappa_{\theta \theta} \leq 0.
\]
This is a condition imposed on the capillarity coefficient that guarantees the strict concavity of the non-standard Hemholtz free energy with respect to the absolute temperature. Moreover, notice that it is compatible with the assumption of a constant capillarity coefficient ($\kappa \equiv \kappa_0 > 0$), which appears frequently in the literature.
\end{remark}

\begin{remark}
The choice of the potential \eqref{choicepot} goes back to the work of van der Waals \cite{vdW1894},  who proposed an energy functional that penalizes high spatial variations on the density, typically at interfaces. This Helmholtz free energy functional allows us to select the correct physical solution, and has the form
\[
\mathcal{F}[\rho,\theta,\nabla\rho] = \int_\Omega
 \rho e (\rho,\theta) + \kappa(\rho,\theta) |\nabla \rho|^2 \, d\Omega,
 \]
 where $\Omega$ is a control volume (see, e.g., Anderson \emph{et al.} \cite{AMW98}).
\end{remark}

From the expression \eqref{Helmfe} we then have $\nabla_{\vg} \Psi = 2 \kappa \nabla \rho$ and $\Psi_\rho = \psi_\rho + \kappa_\rho |\nabla \rho|^2$. Upon substitution into \eqref{exprabc} we obtain
\begin{align}
\tTT &= - \rho^2 \psi_\rho \II - 2 \kappa \rho (\nabla \rho \otimes \nabla \rho) - \rho^2 \kappa_\rho |\nabla \rho|^2 \II + 2 \rho \nabla \cdot(\kappa \rho \nabla \rho) \II, \label{captensor}\\
\vw &= - 2 \kappa \rho^2 (\nabla \cdot \vu) \nabla \rho, \label{intwork}
\end{align}
where the continuity equation in \eqref{gensystfluid2} is used to derive \eqref{intwork}. Hence, $\tTT$ adopts the form $\tTT = -p \II + \KK$, with the Korteweg tensor defined as
\[
\KK :=  - 2 \kappa \rho (\nabla \rho \otimes \nabla \rho) - \rho^2 \kappa_\rho |\nabla \rho|^2 \II + 2 \rho \nabla \cdot(\kappa \rho \nabla \rho) \II.
\]
For convenience, we relabel the capillarity coefficient through
\begin{equation}
\label{defk}
k = k(\rho,\theta) := 2 \rho \kappa(\rho,\theta),
\end{equation}
so that the interstitial work flux now reads,
\begin{equation}
\label{intwork2}
\vw = - k \rho (\nabla \cdot \vu) \nabla \rho,
\end{equation}
and the Korteweg tensor takes the form
\begin{equation}
\label{Korttensor}
\KK =  - k (\nabla \rho \otimes \nabla \rho) + \rho \nabla \cdot (k \nabla \rho) \II - \tfrac{1}{2} \rho k_\rho  |\nabla \rho|^2  \II + \tfrac{1}{2} k |\nabla \rho|^2  \II.
\end{equation}

In addition, Dunn and Serrin reckon that dissipation or entropy production due to viscous friction and heat conduction can be modeled in the same way as in the conventional continuum equilibrium thermodynamics, so that we may assume Newton's law of viscosity through the addition of the viscosity tensor,
\begin{equation}
\label{visctensor}
\mathbf{S} = \widetilde{\lambda} \, (\tr \DD) \II + 2 \widetilde{\nu} \, \DD,
\end{equation}
with $\DD = \tfrac{1}{2}(\nabla \vu + \nabla \vu^\top)$, and heat conduction through Fourier's constitutive law,
\begin{equation}
\label{Fourierlaw}
\vq = - \alpha \nabla \theta,
\end{equation}
where $\alpha = \alpha(\rho,\theta) > 0$ is the heat conductivity coefficient. This principle holds because the interstitial work is precisely the work done by internal \emph{microforces} (in this case, \emph{microstresses}) which are considered to act between parts of the medium as in conventional continuum thermodynamics. Under the assumption that the same Clausius-Dulhem inequality is satisfied, these microforces are non-dissipative and, hence, they do not contribute to the entropy production (see, e.g., Suzuki \cite{Szk20b}). Actually, the concept of microforces introduced by Gurtin \cite{Gurt96} and applied to fluid flow by Gurtin \emph{et al.} \cite{GPV96} can also be alternatively used in order to obtain the Korteweg system of equations for compressible flow, so that the derivation by Dunn and Serrin is a particular case of the microforces formulation (a case in which \emph{external} microforces are absent; see Suzuki \cite{Szk20a,Szk20b} and Freist\"uhler and Kotschote \cite{FrKo19} for further information).

Therefore, after the addition of viscosity and heat conduction terms, the full Cauchy stress tensor takes the form
\begin{equation}
\label{fullT}
\TT = -p \II + \mathbf{S} + \KK,
\end{equation}
where $p$, $\mathbf{S}$ and $\KK$ are given by \eqref{defp}, \eqref{visctensor} and \eqref{Korttensor}, respectively. Upon substitution of \eqref{fullT} and \eqref{intwork2} in \eqref{gensystfluid2}, and since the derivation of the capillarity terms already guarantees that Clausius-Dulhem inequality is satisfied, Dunn and Serrin finally arrive at the thermodynamically consistent compressible Korteweg system of equations:
\begin{equation}
\label{fullKw}
\begin{aligned}
\rho_t + \nabla \cdot ( \rho \vu) &= 0,\\
(\rho \vu)_t + \nabla \cdot (\rho \vu \otimes \vu) + \nabla p &= \nabla \cdot \big( \widetilde{\lambda} \, (\nabla \cdot \vu) \II + \widetilde{\nu} (\nabla \vu + \nabla \vu^\top) \big) + \nabla \cdot \KK,\\
\big( \rho ( \vep + \tfrac{1}{2} |\vu|^2) \big)_t + \nabla \cdot \big( \rho ( \vep + \tfrac{1}{2} |\vu|^2) \vu + p\vu) &=  \nabla \cdot (\alpha \nabla \theta) + \nabla \cdot (\KK \vu)  + \nabla \cdot \vw \, + \\
&\quad + \big( \widetilde{\lambda} \, (\nabla \cdot \vu) \vu + \widetilde{\nu} (\nabla \vu + \nabla \vu^\top)\vu \big),
\end{aligned}
\end{equation}
with $\vw$ and $\KK$ given by \eqref{intwork2} and \eqref{Korttensor}, respectively. The fluid is fully specified by its Helmholtz free energy, $\Psi = \psi + \kappa |\nabla\rho|^2$, through its equilibrium free energy $\psi$ and its capillarity coefficient $\kappa$, as well as the viscosity and heat conducting coefficients. Notice that, if we assume that the non-standard part of the energy is absent, that is, when $\kappa \equiv 0$, then the full Korteweg system \eqref{fullKw} - \eqref{fullT} reduces to the standard NSF model for compressible heat conducting fluids.

Finally, if we specialize system \eqref{fullKw} to one space dimension ($d = 1$) then we arrive at model equations \eqref{NSFK} and \eqref{defKw}, where the combined viscosity coefficient $\mu$ is defined as
\begin{equation}
\label{defmu}
\mu = \mu(\rho,\theta) := \widetilde{\lambda}(\rho,\theta) + 2 \widetilde{\nu}(\rho,\theta) > 0.
\end{equation}
This is the model we study in the present paper.

To finish this section, let us state the main physical assumptions for the one-dimensional system under consideration.
\begin{itemize}
\item[(H$_1$)] \phantomsection
\label{H1} The independent thermodynamic variables are the mass density, $\rho > 0$, and the absolute temperature, $\theta > 0$. It is assumed that they belong to the fixed open domain
\begin{equation}
\label{defD}
\cD := \big\{ (\rho, \theta) \in \R^2 \, : \, \rho > \overline{C}_0, \, \theta > \overline{C}_1 \big\},
\end{equation} 
for uniform constants $\overline{C}_j > 0$, $j = 0,1$. The set of state variables is the open connected set
\begin{equation}
\label{defcU}
\cU := \big\{ (\rho, u, \theta) \in \R^3 \, : \, (\rho,\theta) \in \cD \big\}.
\end{equation}
The constants $\overline{C}_j > 0$ are fixed but arbitrary.
\item[(H$_2$)] \phantomsection
\label{H2} The fluid is specified by a non-standard Helmholtz free energy of the form 
\begin{equation}
\label{Helmfe1d}
\Psi(\rho,\theta,\rho_x) = \psi(\rho,\theta) + \kappa(\rho,\theta) \rho_x^2,
\end{equation}
for given thermodynamic functions, $\psi \in C^\infty(\overline{\cD})$ and $\kappa \in C^{\infty}(\overline{\cD})$, $\kappa > 0$, denoting the standard part of the free energy and a smooth, strictly positive capillarity coefficient, respectively. The specific entropy $s$ of the fluid is then given by expression 
\begin{equation}
\label{TherRelEnt1d}
s(\rho, \theta, \rho_x) = - \Psi_\theta = - \psi_\theta - \kappa_\theta  \rho_x^2 =: \eta - \kappa_\theta  \rho_x^2,
\end{equation}
whereas its internal energy $\vep$ is determined by 
\begin{equation}
\label{TherRelEne1d}
\vep(\rho, \theta, \rho_x) = \Psi + \theta s = \psi - \theta \psi_\theta + (\kappa - \theta \kappa_\theta) \rho_x^2 =: e + (\kappa - \theta \kappa_\theta) \rho_x^2.
\end{equation}
Moreover, the viscosity coefficient and the heat conductivity are strictly positive smooth functions of the independent thermodynamic variables,
\[
\mu, \alpha \in C^{\infty}(\overline{\cD}), \quad \mu > 0, \, \alpha > 0.
\]
 \item[(H$_3$)] \phantomsection
\label{H3} The standard part of the free energy, namely the function $\psi$, is such that the pressure, $p = \rho^2 \psi_\rho > 0$, $p \in C^{\infty}(\overline{\cD})$, the standard specific energy, $e = \psi - \theta \psi_\theta \in C^{\infty}(\overline{\cD})$, and the standard specific entropy, $\eta = - \psi_\theta \in C^{\infty}(\overline{\cD})$, fulfill the (equilibrium) volumetric First Law \eqref{StdTherRel} as well as the conditions
\begin{equation}
\label{Weyl}
p > 0, \quad p_\rho > 0, \quad p_\theta > 0, \quad e_\theta > 0,
\end{equation}
which imply, in turn, the relations
\begin{equation}
\label{las4}
e_\rho = \frac{1}{\rho^2} \big(p - \theta p_\theta \big), \quad \eta_\theta = \frac{e_\theta}{\theta}, \quad \eta_\rho = - \frac{p_\theta}{\rho^2},
\end{equation}
for all $(\rho,\theta)\in \cD$.
 \item[(H$_4$)] \phantomsection
\label{H4} It is assumed that the capillarity coefficient $\kappa \in C^\infty(\overline{\cD})$ is a non-strictly concave function of the temperature at constant density, that is,
\begin{equation}
\label{concavekappa}
\kappa_{\theta \theta} \leq 0, \qquad \text{for all } (\rho,\theta) \in \cD.
\end{equation}
\end{itemize}

\begin{remark}
Hypothesis \hyperref[H1]{\rm{(H$_1$)}} simply selects the independent thermodynamic variables. The mass density and absolute temperature are assumed to be uniformly bounded below in order to avoid both vacuum and absolute zero temperature. The constants $\overline{C}_j > 0$ are fixed but can be arbitrarily chosen depending upon the constant equilibrium state to be considered. Hypothesis \hyperref[H2]{\rm{(H$_2$)}} describes the non-equilibrium thermodynamics of the fluid according to the derivation by Dunn and Serrin \cite{DS85}. In \hyperref[H3]{\rm{(H$_3$)}} we impose the classical equilibrium assumptions on the standard part of the free energy. In fact, hypotheses \eqref{Weyl} are very general and define what are known as \emph{Weyl fluids} \cite{We49}, for which the internal energy increases due to an increase in temperature at constant volume ($e_\theta > 0$), there is an adiabatic increase of pressure due to compression ($p_\rho > 0$), and a generalized Gay-Lussac law holds ($p_\theta > 0$). Assumption \hyperref[H4]{\rm{(H$_4$)}} is imposed on the capillarity coefficient in order to ensure a thermal stability condition. As discussed in Remark \ref{remconcave}, this concavity condition on the coefficient $\kappa$ is sufficient for the full Helmholtz free energy to be strictly concave with respect to the temperature. Notice that \eqref{concavekappa} implies 
\[
k_{\theta \theta} \leq 0, \qquad \text{for all } (\rho,\theta) \in \cD,
\]
where $k$ is the modified capillarity coefficient defined in \eqref{defk}. Hypothesis \hyperref[H4]{\rm{(H$_4$)}} will allow us to define an invertible change of perturbation variables suitable for the stability analysis (see section \ref{secconvexvar} below).
\end{remark}

\begin{remark}
There have been attempts in the literature to provide a thermodynamic derivation of compressible fluids of Korteweg type without the introduction of the concept of interstitial work; see, for example, Heida and M\'alek \cite{HdMl10} and Freist\"uhler and Kotschote \cite{FrKo17,FrKo19}, resulting into new definitions of the Korteweg tensor $\KK$. These new systems satisfy the principle of objectivity in continuum dynamics, as well as the principle of material frame indifference. From a Rational Mechanics viewpoint, they are motivated by the requirement to keep the contributions of the stress tensor $\TT$ to momentum and energy balances related as $\nabla \cdot \TT$ and $\nabla \cdot (\TT \vu)$, respectively (compare the forms of systems \eqref{gensystfluid} and \eqref{gensystfluid2}, for instance). The model by Dunn and Serrin remains, however, as the standard thermodynamically consistent system for compressible fluids exhibiting both capillarity and energy exchange, essentially because there is physical evidence that microstresses do exist (see discussions in \cite{Szk20b,FrKo19,MoVi16} and in the references therein). Since the differences with these alternative systems involve higher order derivatives in the Korteweg tensor, we claim that the results of this paper remain valid for these other models, inasmuch as we linearize around constant equilibrium states for which higher order derivatives are treated as perturbations.
\end{remark}

\subsection{Local well-posedness of perturbations of constant equilibrium states}
\label{Well-poss-Sec}

In this section, we review the local well-posedness theory for system \eqref{NSFK} - \eqref{defKw} in terms of perturbations of a given equilibrium state. The global existence of solutions to compressible fluid models of Korteweg type has been addressed by many authors (for an abridged list of references see \cite{HaLi94,HaLi96a,HaLi96b,DD01,CHZ17,CCD15,Kot08,Kot10}), particularly in the isothermal case. Here, we follow the analysis of the non-isothermal system by Hattori and Li \cite{HaLi96b} and by Chen \emph{et al.} \cite{CHZ17}. Although these works establish the global existence of smooth perturbations of constant equilibrium states, we limit ourselves to invoke the local well-posedness in appropriate function spaces, because the global existence (and, more importantly, the decay) of such perturbations will be a consequence of the stability estimates that we obtain in the forthcoming sections.

Let $\bU = (\brho, \bu, \bthe) \in \cU$ be a constant equilibrium state.  For positive constants $T > 0$, $M_i \geq m_i > 0$, $i = 1,2$ and any $s \geq 3$ let us define the following space of functions:
\begin{equation}
\label{defXs}
\begin{aligned}
X_s\big((0,T); \, m_1,M_1, & m_2,M_2\big) :=  \\
\Big\{ (\rho,u,\theta) :  \, &\rho-\brho \in C((0,T); H^{s+1}(\R)) \cap C^1((0,T); H^{s-1}(\R)), \\
& (u-\bu,\theta - \bthe) \in  C((0,T); H^s(\R) \times H^s(\R)) \cap C^1((0,T); H^{s-2}(\R) \times H^{s-2}(\R)),\\
&(\rho_x, u_x,\theta_x) \in L^2((0,T); H^{s+1}(\R) \times H^s(\R) \times H^s(\R)), \; \text{and} \\
&m_1 \leq \rho(x,t) \leq M_1, \, m_2 \leq \theta(x,t) \leq M_2, \; \text{a.e. in } \, (x,t) \in \R \times (0,T) \Big\}.
\end{aligned}
\end{equation}

In order to distinguish from the standard norm of a vector valued function $V = (V_1, V_2, V_3)$ in $H^s(\R)^3$, namely, $\| V \|_s^2 := \sum \| V_j \|_s^2$, let us introduce the following notation,
\begin{equation}
\label{triplenorm}
\vertiii{V}_\ell^2 := \| V_1 \|_{\ell+1}^2 + \| V_2 \|_\ell^2 + \| V \|_\ell^2,\qquad V \in H^{s+1}(\R) \times H^{s}(\R)^2, \quad  0 \leq \ell \leq s.
\end{equation} 
Henceforth, for any $U = (\rho, u, \theta) \in X_s\big((0,T); m_1,M_1,m_2,M_2\big)$ we denote
\begin{align}
E_s(t) &:= \sup_{\tau \in [0,t]} \Big[ \| \rho(\tau)-\brho\|^2_{s+1} + \| u(\tau)-\bu\|^2_{s} + \| \theta(\tau)-\bthe\|^2_{s}\Big] = \sup_{\tau \in [0,t]}  \vertiii{U(\tau) - \bU}_s^2, \label{defEs}\\
F_s(t) &:= \int_0^t \Big[ \| \rho_x(\tau) \|_{s+1}^2 + \| u_x(\tau) \|_s^2 + \| \theta_x(\tau) \|_s^2 \Big] \, d\tau = \int_0^t \, \vertiii{U_x(\tau)}_s^2 \, d\tau ,\label{defFs}
\end{align}
for all $t \in [0,T]$. 

The following local existence theorem for perturbations of equilibria can be obtained by the application of a dual argument and by an iteration technique, as performed in the proof of Theorem 2.1 by Hattori and Li \cite{HaLi96b}. Details are omitted.

\begin{theorem}
\label{themlocale}
Under assumptions \hyperref[H1]{\rm{(H$_1$)}} - \hyperref[H3]{\rm{(H$_3$)}}, let $\bU = (\brho, \bu, \bthe)$ be a constant equilibrium state with $\brho >0$ and $\bthe > 0$. Suppose that
\begin{equation}
\label{incond}
\rho_0 - \brho \in H^{s+1}(\R), \quad (u_0 - \bu, \theta_0 - \bthe) \in H^s(\R) \times H^s(\R),
\end{equation}
for some $s \geq 3$. Then there exists a positive constant $a_0 > 0$ such that, if
\[
E_s(0)^{1/2} = \Big[ \| \rho_0-\brho\|^2_{s+1} + \| u_0-\bu\|^2_{s} + \| \theta_0-\bthe\|^2_{s} \Big]^{1/2} < a_0,
\]
then there hold $m_1 \leq \rho_0(x) \leq M_1$ and $m_2 \leq \theta_0(x) \leq M_2$, \emph{a.e.} in $x \in \R$ for some positive constants $M_i \geq m_i > 0$, $i = 1,2$, and there exists a positive time $T_1 = T_1(a_0) > 0$ such that a unique smooth solution,
\[
U = (\rho,u,\theta) \in X_s\big((0,T_1); \tfrac{1}{2}m_1,2M_1,\tfrac{1}{2}m_2,2M_2\big),
\]
exists for the Cauchy problem of system \eqref{NSFK} - \eqref{defKw} with initial data $U(0) = (\rho_0,u_0,\theta_0)$. Moreover, the solution satisfies the energy estimate
\begin{equation}
\label{localEE}
E_s(T_1) + F_s(T_1) \leq C_1 E_s(0),
\end{equation}
for some constant $C_1 > 0$ depending only on $a_0$.
\end{theorem}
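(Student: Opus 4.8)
The plan is to follow the classical iteration scheme for quasilinear systems of mixed hyperbolic--parabolic type, exactly as in the proof of Theorem~2.1 of \cite{HaLi96b} (see also \cite{CHZ17}); we only sketch the main steps, since the details are routine. First, the pointwise bounds on the data: because $s\geq 3$, the embeddings $H^{s+1}(\R),H^s(\R)\hookrightarrow L^\infty(\R)$ give $\|\rho_0-\brho\|_{L^\infty}+\|\theta_0-\bthe\|_{L^\infty}\leq C_* E_s(0)^{1/2}<C_* a_0$, so choosing $a_0$ small (depending only on $\brho$, $\bthe$, $\overline{C}_0$, $\overline{C}_1$ and $C_*$) forces $(\rho_0(x),\theta_0(x))$ into a fixed compact subset of $\cD$, which defines the constants $M_i\geq m_i>0$; and, by continuity in time of the $H^{s+1}$ and $H^s$ norms together with the same embeddings, any solution on a sufficiently short interval stays inside $X_s$ with the dilated bounds $\tfrac12 m_1,2M_1,\tfrac12 m_2,2M_2$.

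Next, the linear iteration. Rewrite \eqref{NSFK}--\eqref{defKw} in primitive (nonconservative) variables $U=(\rho,u,\theta)$: the continuity equation becomes the transport equation $\rho_t+u\rho_x+\rho u_x=0$; dividing the momentum balance by $\rho$ and using continuity turns it into $u_t+uu_x+(p_\rho/\rho)\rho_x+(p_\theta/\rho)\theta_x=(\mu/\rho)u_{xx}+k\,\rho_{xxx}+(\text{lower order})$; and the energy balance reduces, after the usual manipulation, to a uniformly parabolic equation for $\theta$ with principal part $(\alpha/(\rho e_\theta))\theta_{xx}$, plus transport, lower-order, and --- since $\vep$ depends on $\rho_x$ --- further third-order-in-$\rho$ terms. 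Starting from $U^0\equiv\bU$, define $U^n=(\rho^n,u^n,\theta^n)$, $n\geq1$, by freezing all coefficients and all genuinely nonlinear terms at $U^{n-1}$ and solving the resulting linear system for $U^n$, keeping the third-order Korteweg contribution $k(\rho^{n-1},\theta^{n-1})\rho^{n-1}\partial_x^3\rho^n$ in the principal part. This linear system is parabolic in the sense of Petrowsky for the mixed-order unknown in which $\rho$ is counted one order above $u$ and $\theta$: the relevant $2\times2$ block of the principal symbol has characteristic polynomial $\lambda^2+(\mu/\brho)\xi^2\lambda+k\brho\,\xi^4$, whose roots obey $\Re\lambda\leq-\delta\xi^2$ because $\mu,k>0$. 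Standard linear parabolic theory (solving, where needed, by a duality argument, the operator being non-self-adjoint) then yields a unique solution with $\rho^n-\brho\in C_tH^{s+1}\cap L^2_tH^{s+2}$ and $(u^n-\bu,\theta^n-\bthe)\in C_tH^s\cap L^2_tH^{s+1}$.

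The heart of the matter is the uniform-in-$n$ a priori estimate. Apply $\partial_x^\ell$, $0\leq\ell\leq s$, to the linearized equations, pair with $\partial_x^\ell u^n$, $\partial_x^\ell\theta^n$ and with a suitably weighted $\partial_x^{\ell+1}\rho^n$ for the continuity equation, integrate in $x$, and control every commutator and nonlinear product by Moser / Gagliardo--Nirenberg estimates. The parabolic equations give the energy--dissipation bound for $(u^n,\theta^n)$ in the $\vertiii{\cdot}_s$ norm; the delicate point is the density, whose transport equation alone does \emph{not} close in $H^{s+1}$, because its top-order term would require one more $x$-derivative of $u^{n-1}$ than is available. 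This is exactly where the Korteweg structure is used: differentiating the continuity equation once in $t$ and substituting the momentum equation produces a fourth-order term $\propto\rho\,\partial_x^4\rho$, so that the coupled $\rho$--$u$ block furnishes a parabolic-type control of $\partial_x\rho$ in $H^{s+1}$ --- precisely the dissipation $\int_0^t\|\rho_x\|_{s+1}^2\,d\tau$ in $F_s$ --- and returns the $L^2_tH^{s+2}$ regularity of $\rho^n$ used above. Summing the $(\rho,u,\theta)$ estimates gives a closed inequality of the shape $E_s^{(n)}(t)+F_s^{(n)}(t)\leq C_0 E_s(0)+C_0(t+\sqrt t)\big(\sup_{[0,t]}(E_s^{(n)}+E_s^{(n-1)})\big)^{3/2}$, where $E_s^{(n)}$, $F_s^{(n)}$ denote the functionals \eqref{defEs}--\eqref{defFs} evaluated along $U^n$; hence, for $a_0$ and then $T_1=T_1(a_0)$ small, the iterates stay in a fixed ball of $X_s\big((0,T_1);\tfrac12 m_1,2M_1,\tfrac12 m_2,2M_2\big)$ with $E_s^{(n)}(T_1)+F_s^{(n)}(T_1)\leq C_1 E_s(0)$ uniformly in $n$.

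Finally, subtracting the linear problems for $U^{n+1}$ and $U^n$ and running the same transport/parabolic estimates on the differences, but in the weaker norm $\vertiii{\cdot}_{s-1}$ so that no top-order regularity gain is needed, yields a contraction $\sup_{[0,T_1]}\vertiii{U^{n+1}-U^n}_{s-1}\leq C\sqrt{T_1}\,\sup_{[0,T_1]}\vertiii{U^n-U^{n-1}}_{s-1}$ for $T_1$ small; thus $\{U^n\}$ is Cauchy in that norm, interpolation with the uniform $X_s$ bound upgrades convergence to $X_{s'}$ for every $s'<s$, and weak-$*$ compactness together with the equations identifies the limit $U\in X_s\big((0,T_1);\tfrac12 m_1,2M_1,\tfrac12 m_2,2M_2\big)$ as a solution of \eqref{NSFK}--\eqref{defKw} with the prescribed data. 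Uniqueness follows from the same difference estimate applied to two solutions, and the energy bound \eqref{localEE} is inherited from the uniform estimate of the previous step by lower semicontinuity of the Sobolev norms. The main obstacle, as indicated, is the regularity bookkeeping forced by the third-order Korteweg terms: one must place $\rho$ one order above $u$ and $\theta$ and extract the hidden parabolic smoothing of the density from the coupled momentum/continuity block rather than from the bare transport equation; once this is organized, the remainder is the standard --- if lengthy --- energy machinery for quasilinear parabolic systems.
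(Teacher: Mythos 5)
Your proposal is correct and follows the same route as the paper, which simply cites the iteration-plus-duality argument of Hattori and Li (Theorem 2.1 of \cite{HaLi96b}) and omits all details. Your sketch fills those details in faithfully: the small-data step to trap $(\rho_0,\theta_0)$ in $\cD$, the linear iteration keeping the Korteweg term implicit, the principal-symbol computation $\lambda^2+(\bmu/\brho)\xi^2\lambda+\bk\brho\,\xi^4$ showing $\Re\lambda\le-\delta\xi^2$, the coupled $\rho$--$u$ mechanism that upgrades the density to $L^2_tH^{s+2}$ (whence $F_s$), the closed a priori bound on short time, and the contraction in the weaker $\vertiii{\cdot}_{s-1}$ norm are exactly the standard ingredients this citation refers to.
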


\begin{corollary}[a priori estimate]
\label{cor26}
Under \hyperref[H1]{\rm{(H$_1$)}} - \hyperref[H3]{\rm{(H$_3$)}}, let 
\[
U = (\rho,u,\theta) \in X_s\big((0,T); \tfrac{1}{2}m_1,2M_1,\tfrac{1}{2}m_2,2M_2\big),
\]
be a local solution of the initial value problem of \eqref{NSFK} - \eqref{defKw} with initial data $U(0) = (\rho_0,u_0,\theta_0)$ satisfying \eqref{incond}. Then there exists $0 < a_2 \leq a_0$ sufficiently small such that, if for any $0 < t  \leq T$ we have $E_s(t)^{1/2} \leq a_2$, then there holds
\begin{equation}
\label{localaprioriEE}
\big( E_s(t) + F_s(t) \big)^{1/2} \leq C_2 E_s(0)^{1/2},
\end{equation}
where $C_2 = C_2(a_2) > 0$ is a positive constant independent of $t > 0$.
\end{corollary}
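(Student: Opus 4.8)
I would deduce \eqref{localaprioriEE} from a closed dissipative energy inequality combined with a smallness–absorption step. The starting point is the observation that the energy method used to prove Theorem \ref{themlocale} (following the iteration scheme of Theorem 2.1 in \cite{HaLi96b}, see also \cite{CHZ17}) actually delivers more than the short–time estimate \eqref{localEE}: for any solution $U=(\rho,u,\theta)\in X_s\big((0,T);\tfrac12 m_1,2M_1,\tfrac12 m_2,2M_2\big)$ and any $0<t\leq T$ one has an inequality of the form
\[
E_s(t) + F_s(t) \leq C\,E_s(0) + C\,G\big(E_s(t)\big)\,\big(E_s(t)+F_s(t)\big),
\]
where $G:[0,\infty)\to[0,\infty)$ is continuous and nondecreasing with $G(0)=0$, and $C>0$ depends only on $a_0$, on $\bU$ and on the fixed bounds $m_i,M_i$, but not on $t$ or $T$. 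Here the dissipation $F_s$ is generated by the viscosity term $\mu u_x$, the heat–conduction term $\alpha\theta_x$ and the capillary term $k\rho\rho_{xx}$ in \eqref{NSFK}: differentiating the equations up to order $s$, integrating by parts and using the uniform positivity of $\mu,\alpha,k$ on the compact range of $(\rho,\theta)$ controls $\|u_x\|_s$, $\|\theta_x\|_s$ and, using the extra regularity carried by $\rho$, also $\|\rho_x\|_{s+1}$ in $L^2_t$; the convective, pressure and commutator terms are quadratic or of higher order in $U-\bU$ and, by the embedding $H^s(\R)\hookrightarrow L^\infty(\R)$ ($s\geq3$) together with Moser–type product and commutator estimates, are all absorbed into $C\,G(E_s(t))(E_s(t)+F_s(t))$.

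\textbf{Absorption step.} Granting the above, I would then choose $0<a_2\leq a_0$ so small that $C\,G(a_2^2)\leq\tfrac12$. If $E_s(t)^{1/2}\leq a_2$ for some $0<t\leq T$, then — since $\tau\mapsto E_s(\tau)$ is nondecreasing — $E_s(\tau)\leq a_2^2$ for every $\tau\in[0,t]$, hence $G(E_s(t))\leq G(a_2^2)\leq\tfrac1{2C}$, and the closed inequality becomes
\[
E_s(t)+F_s(t)\leq C\,E_s(0)+\tfrac12\big(E_s(t)+F_s(t)\big).
\]
Moving the last term to the left gives $E_s(t)+F_s(t)\leq 2C\,E_s(0)$, and taking square roots yields \eqref{localaprioriEE} with $C_2:=\sqrt{2C}=C_2(a_2)>0$ independent of $t$.

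\textbf{Main obstacle.} The only real work is establishing the closed dissipative inequality, which is precisely the content of the Hattori–Li energy scheme. The delicate point will be the third–order capillary contribution $k\rho\rho_{xx}$ in the momentum equation (and the corresponding $uK+w$ terms in the energy equation): after differentiation these must be handled by careful integration by parts that exploits the $H^{s+1}$ regularity of $\rho$ built into $X_s$, so as to produce a nonnegative capillary dissipation term rather than an uncontrolled residual. One must also check that all constants depend only on $a_0$ and the uniform bounds for $(\rho,\theta)$ — not on $T$ — which holds because membership in $X_s$ confines $(\rho,\theta)$ to a fixed compact subset of $\cD$ on which $\mu,\alpha,k,p$ and their derivatives are bounded and bounded away from degeneracy. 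Since these estimates reproduce those of \cite{HaLi96b} (and \cite{CHZ17}), I would simply cite them and present only the absorption argument in detail.
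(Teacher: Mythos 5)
Your proposal is correct and takes essentially the same route as the paper, which simply cites Proposition 2.3 of Hattori and Li \cite{HaLi96b} (and Proposition 2.2 of \cite{CHZ17}) for this a priori estimate. You have merely unpacked what that citation contains: a closed dissipative energy inequality produced by the Hattori--Li scheme, followed by the standard smallness-absorption argument, which you carry out correctly (monotonicity of $E_s$, choice of $a_2$ so that $CG(a_2^2)\leq\tfrac12$, then absorb).
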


\begin{proof}
See Proposition 2.3 in Hattori and Li \cite{HaLi96b} (see also the related Proposition 2.2 in \cite{CHZ17}).
\end{proof}

\begin{remark}
\label{remclassical}
Notice that Hattori and Li \cite{HaLi96b} demand high regularity on the initial condition and on the solutions ($s \geq 3$), yielding classical local solutions to the Cauchy problem by virtue of Sobolev's embedding theorem: since $s \geq 3$ then we have the continuous embeddings $H^{s+j}(\R) \hookrightarrow C^{j+2}(\R)$ for any $j \geq 0$, with 
\[
\sup_{k \leq j+2} \; \sup_{x \in\R^n} |\partial_x^k u| \leq C \| u \|_{s+j},
\]
for all $u \in H^{s+j}(\R)$ and some $C = C(s,j) > 0$. Hence, the local solutions to the Cauchy problem for system \eqref{NSFK} satisfy $\rho(t) \in H^{s+1} \hookrightarrow C^{3}(\R)$, $u(t), \theta(t) \in H^s(\R) \hookrightarrow C^2(\R)$, for all $t \in [0,T]$.
\end{remark}

\begin{remark}
\label{remLinftybound}
From estimate \eqref{localEE} we have
\[
\| \partial_x^j (\rho(t) - \brho) \|_0 \leq C_1 E_s(0),
\]
with $C_1 = C_1(a_0)> 0$, for all $t \in [0,T]$ and $0 \leq j \leq 3$. Therefore, from the classical Sobolev's inequality, namely $\| v \|_\infty \leq 2 \| v \|_0^{1/2} \| v_x \|_0^{1/2}$ for all $v \in H^1(\R)$, we readily obtain
\[
\| \partial_x^j (\rho(t) - \brho) \|_\infty \leq C,
\]
for all $t\in [0,T]$, $0 \leq j \leq 3$, and some constant $C = C(a_0,s) > 0$ independent of $T$. In the same fashion we have
\[
\| \partial_x^j (u(t) - \bu) \|_\infty, \, \| \partial_x^j (\theta(t) - \bthe) \|_\infty \leq C, \qquad j = 0,1,2, \quad t \in [0,T].
\]

Finally, using the above $L^\infty$-estimates, it is not hard to verify that we can choose $a_0 > 0$ in the statement of Theorem \ref{themlocale} sufficiently small such that
\[
(\rho, u, \theta)(t) \in \cU, \quad \text{a.e. in }\, x \in \R, \quad \forall t \in [0,T].
\]
This holds mainly because the constants $\overline{C}_j > 0$, which define the space of state variables $\cU$ in \eqref{defD} and \eqref{defcU}, are arbitrary. The constant $a_0 > 0$ in the original theorem in \cite{HaLi96b} depends only on $\bU$. Hence, for a given equilibrium state $\bU$ with $\brho, \bthe > 0$ we may choose appropriate constants $\overline{C}_j > 0$, and $a_0$ sufficiently small, such that $\tfrac{1}{2}m_j > \overline{C}_j$, yielding $U(t) \in \cU$ for all $t \leq T$. 
\end{remark}

\section{Partially symmetric perturbation system}
\label{secconvexvar}

In this section we reformulate the nonlinear Korteweg system \eqref{NSFK} - \eqref{defKw} in terms of perturbations of constant equilibrium states. For that purpose, we employ the convex extension and symmetrization procedure implemented in \cite{KaSh88a} for the NSF system (for convenience of the reader, we recall such calculation in Appendix \ref{ConvExtN-S}). Essentially, we use the Hessian of the equilibrium convex entropy as a (in our case, partial) symmetrizer, exactly as in the NSF case: since the change of variables involves matrix functions evaluated at equilibrium states, the non-standard part of the entropy vanishes.

\subsection{Convex extension and new perturbation variables}
\label{secnewpert}

Once again, let $\bU = (\brho, \bu, \bthe) \in \cU$ be a constant equilibrium state. For given initial conditions $U(0) = (\rho_0,u_0,\theta_0)$ satisfying \eqref{incond} for some $s \geq 3$, Theorem \ref{themlocale} guarantees the existence of an evolution,
\[
U = (\rho,u,\theta) \in X_s\big((0,T); \tfrac{1}{2}m_1,2M_1,\tfrac{1}{2}m_2,2M_2\big),
\]
which is a local solution to the Cauchy problem for some $T > 0$, with constants $m_j, M_j$, $j = 1,2$, such that $U(\cdot, t) \in \cU$ a.e. in $x\in \R$ and for all $t \in [0,T]$ (see Remark \ref{remLinftybound}). Moreover, it is a classical solution to system \eqref{NSFK}. After some straightforward calculations, we recast the former as the following system in conservation form,
\begin{equation}
\label{NSFKc}
F^0(U,U_x)_t + F^1(U,U_x)_x = (G(U)U_x)_x + (H(U)U_{xx})_x + \widetilde{g}(U,U_x)_x.
\end{equation}
Here the conserved quantities are
\begin{equation}
\label{defF0}
F^0(U,U_x) := \begin{pmatrix} 
\rho \\ \rho u \\ \rho (\vep + \tfrac{1}{2} u^2)
\end{pmatrix},
\end{equation}
whereas
\begin{equation}
\label{defF1}
F^1(U,U_x) := \begin{pmatrix} 
\rho u \\ \rho u^2 + p \\ \rho u(\vep + \tfrac{1}{2} u^2) + pu
\end{pmatrix}, 
\end{equation}
\begin{equation}
\label{defGH}
G(U) := \begin{pmatrix} 
0 & 0 & 0 \\ 0 & \mu & 0 \\ 0 & \mu u & \alpha
\end{pmatrix}, \quad H(U) := \begin{pmatrix} 
0 & 0 & 0 \\ k \rho & 0 & 0 \\ k \rho u & 0 & 0
\end{pmatrix}.
\end{equation}

From assumptions \hyperref[H1]{\rm{(H$_1$)}} - \hyperref[H4]{\rm{(H$_4$)}}, these matrix and vector fields are smooth with respect to $U$ and $U_x$. It is to be observed that the dependence of the functions $F^j$, $j = 0,1$, on $U_x$ is solely determined by the (non-standard) internal energy, 
\[
\vep = \vep(\rho,\theta,\rho_x) = e(\rho,\theta) + (\kappa(\rho,\theta) - \theta\kappa_\theta(\rho,\theta)) \rho_x^2,
\]
which only involves density gradients. Hence $F^j \in C^\infty(\widetilde{\cU}; \R^3)$, $j = 0,1$, where $\widetilde{\cU} := \{ (\rho, u, \theta, \rho_x) \in \R^4 \, : \, (\rho, u, \theta) \in \cU \}$ and $G, H \in C^\infty(\cU;\R^{3 \times 3})$ because the latter do not depend on density gradients. Finally, the term $\widetilde{g} = \widetilde{g}(U,U_x)$ has the form
\begin{equation}
\label{deftildeg}
\widetilde{g}(U,U_x) := \begin{pmatrix} 
0 \\ \tfrac{1}{2} \rho \rho_x^2 k_\rho + \rho \rho_x \theta_x k_\theta - \tfrac{1}{2} k \rho_x^2 \\ 
\tfrac{1}{2} u \rho \rho_x^2 k_\rho + u\rho \rho_x \theta_x k_\theta - \tfrac{1}{2} k u \rho_x^2 - \rho \rho_x u_x k
\end{pmatrix}.
\end{equation}
Notice that $\widetilde{g}(U,U_x)$ includes gradients of both density and temperature. Moreover, $\widetilde{g}(U,U_x) = O(|U_x|^2)$ and the terms of order $O(|U||U_x|)$ are included in the expressions involving $G$ and $H$.

The following result guarantees the invertibility of an appropriate change of coordinates.
\begin{proposition}
\label{propgoodW}
Under assumptions \hyperref[H1]{\rm{(H$_1$)}} - \hyperref[H4]{\rm{(H$_4$)}}, let 
\[
U = (\rho,u,\theta) \in X_s\big((0,T); \tfrac{1}{2}m_1,2M_1,\tfrac{1}{2}m_2,2M_2\big),
\]
be the local solution to the Cauchy problem \eqref{NSFK} with suitable initial conditions satisfying the hypotheses of Theorem \ref{themlocale}, with constants $m_j, M_j$, $j = 1,2$, such that $U(\cdot, t) \in \cU$ \emph{a.e.} in $x\in \R$ and for all $t \in [0,T]$, for any $T \leq T_1$. Then the matrix field $D_U F^0(U,U_x)$ is invertible.
\end{proposition}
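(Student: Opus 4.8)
The plan is to compute the Jacobian $D_U F^0$ explicitly and to exploit the fact that, with $U = (\rho,u,\theta)$, it is block lower triangular. Recall from \eqref{defF0} that $F^0(U,U_x) = (\rho,\ \rho u,\ \rho(\vep + \tfrac12 u^2))^\top$, where the only dependence on gradients enters through $\vep = \vep(\rho,\theta,\rho_x) = e(\rho,\theta) + (\kappa(\rho,\theta) - \theta\kappa_\theta(\rho,\theta))\rho_x^2$, which does not depend on $u$. First I would differentiate the three components with respect to $\rho$, $u$ and $\theta$, obtaining
\[
D_U F^0(U,U_x) = \begin{pmatrix} 1 & 0 & 0 \\ u & \rho & 0 \\ \vep + \tfrac12 u^2 + \rho\,\vep_\rho & \rho u & \rho\,\vep_\theta \end{pmatrix},
\]
where $\vep_\rho = e_\rho + (\kappa_\rho - \theta\kappa_{\theta\rho})\rho_x^2$. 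Since every entry above the main diagonal vanishes, the matrix is lower triangular and $\det D_U F^0 = 1\cdot\rho\cdot(\rho\,\vep_\theta) = \rho^2\,\vep_\theta$.

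Next I would compute $\vep_\theta$ from the expression above: differentiating $\vep = e + (\kappa - \theta\kappa_\theta)\rho_x^2$ in $\theta$ gives $\vep_\theta = e_\theta + (\kappa_\theta - \kappa_\theta - \theta\kappa_{\theta\theta})\rho_x^2 = e_\theta - \theta\kappa_{\theta\theta}\rho_x^2$, so that
\[
\det D_U F^0(U,U_x) = \rho^2\big(e_\theta(\rho,\theta) - \theta\,\kappa_{\theta\theta}(\rho,\theta)\,\rho_x^2\big).
\]
Finally I would check that this quantity is strictly positive pointwise. Since $U(\cdot,t)\in\cU$ for all $t\in[0,T]$ (Remark \ref{remLinftybound}), we have $(\rho,\theta)\in\cD$, hence $\rho \geq \tfrac12 m_1 > 0$, $\theta \geq \tfrac12 m_2 > 0$, and $e_\theta(\rho,\theta) > 0$ by \hyperref[H3]{\rm{(H$_3$)}}. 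By \hyperref[H4]{\rm{(H$_4$)}}, $\kappa_{\theta\theta}(\rho,\theta)\leq 0$, and since $\rho_x^2 \geq 0$ and $\theta > 0$ we get $-\theta\,\kappa_{\theta\theta}\,\rho_x^2 \geq 0$; therefore $\vep_\theta \geq e_\theta > 0$ and $\det D_U F^0 \geq \rho^2\, e_\theta > 0$, so $D_U F^0(U,U_x)$ is invertible at every $(x,t)$.

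There is no serious obstacle in this argument; the only point that requires attention is locating precisely where \hyperref[H4]{\rm{(H$_4$)}} is needed. Because $\rho_x$ is unconstrained, the term $-\theta\kappa_{\theta\theta}\rho_x^2$ appearing in $\vep_\theta$ could in principle have arbitrary sign and magnitude, and could cancel $e_\theta$; the concavity hypothesis $\kappa_{\theta\theta}\leq 0$ is exactly what guarantees that this term is nonnegative, so that $\vep_\theta$ stays bounded below by $e_\theta>0$ uniformly in $\rho_x$. This is the thermodynamic role of assumption \hyperref[H4]{\rm{(H$_4$)}} announced in Remark \ref{remconcave}, and it is what makes the nonlinear change of variables $(U,U_x)\mapsto W$ well defined.
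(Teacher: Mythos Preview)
Your proof is correct and follows essentially the same approach as the paper: compute the lower-triangular Jacobian $D_U F^0$, read off the determinant $\rho^2\vep_\theta$, and invoke \hyperref[H3]{\rm{(H$_3$)}} and \hyperref[H4]{\rm{(H$_4$)}} to conclude positivity. Your computation $\vep_\theta = e_\theta - \theta\kappa_{\theta\theta}\rho_x^2$ is in fact more accurate than the paper's displayed expression, which drops the factor $\theta$; since $\theta>0$ this does not affect the argument.
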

\begin{proof}
From Theorem \ref{themlocale} we know that the solution is classical (thanks to Sobolev's embedding theorem with $s \geq 3$). Thus, we may compute
\[
D_U F^0(U,U_x) = \begin{pmatrix}
1 & 0 & 0 \\ u & \rho & 0 \\ \vep + \tfrac{1}{2}u^{2} + \rho \vep_{\rho} & \rho u & \rho \vep_{\theta}
\end{pmatrix}.
\]
In view that hypotheses \hyperref[H3]{\rm{(H$_3$)}} and \hyperref[H4]{\rm{(H$_4$)}} hold, its determinant is clearly positive
\[
\det \big[D_U F^0(U,U_x) \big]= \rho^2 \vep_\theta = \rho^2 (e_\theta - \kappa_{\theta \theta} \rho_x^2) > 0.
\]
Therefore, $D_U F^0(U,U_x)^{-1}$ exists for all solutions $U$. 
\end{proof}

\begin{remark}
\label{remdiffcnsrvd}
Notice that the conserved variables in the NSFK system differ from the conserved variables for the NSF model due to the presence of density gradients,
\begin{equation}
\label{decF0}
F^0(U,U_x) = f^0(U) + \Gamma^0(U,U_x), \qquad
 \Gamma^0(U,U_x) := \begin{pmatrix} 0 \\ 0 \\ \rho(\kappa - \theta \kappa_\theta) \rho_x^2
\end{pmatrix},
\end{equation}
where $f^0 = f^0(U)$ denotes the conserved quantities for the NSF system as defined in \eqref{3}. In other words, the difference lies in the non-standard part of the energy. Since the derivatives of a constant state $\bU$ vanish, these conserved quantities coincide when they are evaluated at equilibrium states,
\[
F^0(\bU,0) = f^0(\bU).
\]
The same happens to the nonlinear flux function
\begin{equation}
\label{decF1}
F^1(U,U_x) = f^1(U) + \Gamma^1(U,U_x), \qquad
 \Gamma^1(U,U_x) := \begin{pmatrix} 0 \\ 0 \\ \rho u(\kappa - \theta \kappa_\theta) \rho_x^2
\end{pmatrix}
\end{equation}
(where $f^1 = f^1(U)$ is defined in \eqref{3}), and to their Jacobians with respect to $U$. In other words, we have $F^j(\bU,0) =f^j(\bU)$ and $D_UF^j(\bU,0) = D_U f^j(\bU)$, for each $j = 0,1$.

For shortness, in the sequel and for the rest of the paper we shall write all physical quantities evaluated at the constant state $\bU = (\brho, \bu, \bthe)$ with overlined variables (for example, $\bp = p(\brho,\bthe)$, $\be_\theta = e_\theta(\brho,\bthe)$, $p_\rho(\brho,\bthe) = \bp_\rho$, etc.)
\end{remark}

Next, let us recast system \eqref{NSFKc} for perturbations of a given constant equilibrium state $\bU = (\brho, \bu, \bthe) \in \cU$. For instance, in view that Proposition \ref{propgoodW} holds, we may write
\[
\begin{aligned}
F^1(U,U_x)_x &= \partial_x \big( F^1(U,U_x) - F^1(\bU,0)\big)\\
&= D_U F^1(\bU,0) D_UF^0(\bU,0)^{-1} F^0(U,U_x)_x \, + \\ &\qquad + \partial_x \Big[ F^1(U,U_x) - F^1(\bU,0) - D_U F^1(\bU,0) D_UF^0(\bU,0)^{-1} \big(F^0(U,U_x) - F^0(\bU,0) \big)\Big].
\end{aligned}
\]
Likewise, the viscous and capillar terms can be written as,
\[
\begin{aligned}
(G(U)U_x)_x &= G(\bU) D_U F^0(\bU,0)^{-1} F^0(U,U_x)_{xx} \, +\\
&\quad + \partial_x \Big[ G(U) D_U F^0(U,U_x)^{-1} D_UF^0(U,U_x) U_x - G(\bU) D_UF^0(\bU,0)^{-1} F^0(U,U_x)_x \Big]\\
&= G(\bU) D_U F^0(\bU,0)^{-1} F^0(U,U_x)_{xx} \, + \\
&\quad + \partial_x \Big[ \big( G(U) D_UF^0(U,U_x)^{-1} - G(\bU) D_U F^0(\bU,0)^{-1} \big) D_U F^0(U,U_x) U_x +\\
&\quad \qquad - G(\bU) D_U F^0(\bU,0)^{-1} D_{U_x} F^0(U,U_x) U_{xx} \Big],
\end{aligned}
\]
and as
\[
\begin{aligned}
(H(U)U_{xx})_x &= (H(U)U_{xx})_x + H(\bU) D_U F^0(\bU,0)^{-1} F^0(U,U_x)_{xxx} - H(\bU) D_U F^0(\bU,0)^{-1} F^0(U,U_x)_{xxx}\\
&= H(\bU) D_U F^0(\bU,0)^{-1} F^0(U,U_x)_{xxx} \, + \\
&\quad + \partial_x \Big[ H(U) D_U F^0(U,U_x)^{-1} D_U F^0(U,U_x) U_{xx} - H(\bU) D_UF^0(\bU,0)^{-1} F^0(U,U_x)_{xx} \Big]\\
&= H(\bU) D_U F^0(\bU,0)^{-1} F^0(U,U_x)_{xxx} \, + \\
&\quad + \partial_x \Big[ \big[ H(U) D_U F^0(U,U_x)^{-1} - H(\bU) D_UF^0(\bU,0)^{-1} \big] D_U F^0(U,U_x) U_{xx} + \\
&\qquad \qquad - H(\bU) D_UF^0(\bU,0)^{-1} \big[ \partial_x (D_U F^0(U,U_x)) U_x + \\
&\qquad \qquad + D_{U_x} F^0(U,U_x) U_{xxx} + \partial_x (D_{U_x} F^0(U,U_x))  U_{xx}\big] \Big],
\end{aligned}
\]
respectively. Henceforth, system \eqref{NSFKc} is recast as
\begin{equation}
\label{newNSFKc}
\begin{aligned}
F^0(U,U_x)_t &+ D_U F^1(\bU,0) D_UF^0(\bU,0)^{-1} F^0(U,U_x)_x + \\&- G(\bU) D_U F^0(\bU,0)^{-1} F^0(U,U_x)_{xx} +\\
&- H(\bU) D_U F^0(\bU,0)^{-1} F^0(U,U_x)_{xxx}\\
&= \partial_x \Big[ \widetilde{r}(U,U_x) + \widetilde{R}(U,U_x) U_x + \widetilde{I} (U,U_x,U_{xx}) + \widetilde{g}(U,U_x)\Big],
\end{aligned}
\end{equation}
where
\[
\begin{aligned}
\widetilde{r}(U,U_x) &:= - \big(F^1(U,U_x) - F^1(\bU,0) \big) + D_U F^1(\bU,0) D_UF^0(\bU,0)^{-1} \big(F^0(U,U_x) - F^0(\bU,0) \big),\\
\widetilde{R}(U,U_x) &:= \big[ G(U) D_UF^0(U,U_x)^{-1} - G(\bU) D_U F^0(\bU,0)^{-1} \big] D_U F^0(U,U_x),
\end{aligned}
\]
\[
\begin{aligned}
\widetilde{I} (U,U_x,U_{xx}) &:= - G(\bU) D_U F^0(\bU,0)^{-1} D_{U_x} F^0(U,U_x) U_{xx} + \\
&\quad + \big[ H(U) D_U F^0(U,U_x)^{-1} - H(\bU) D_UF^0(\bU,0)^{-1} \big] D_U F^0(U,U_x) U_{xx} +\\
&\quad - H(\bU) D_UF^0(\bU,0)^{-1} \big[ \partial_x (D_U F^0(U,U_x)) U_x + D_{U_x} F^0(U,U_x) U_{xxx} + \partial_x (D_{U_x} F^0(U,U_x))  U_{xx}\big],
\end{aligned}
\]
and $\widetilde{g} = \widetilde{g}(U,U_x)$ is given by \eqref{deftildeg}.

Let us now define the new perturbation variables as
\begin{equation}
\label{defWvar}
W:= D_UF^0(\bU,0)^{-1} \big( F^0(U,U_x) - F^0(\bU,0)\big).
\end{equation}
We now show that system \eqref{newNSFKc} can be written in partially symmetric form in terms of the $W$ variables. To that end, we notice that $D_U F^j(\bU,0)^{-1} = D_U f^j(\bU)^{-1}$ for $j =0,1$, and hence the coefficients of system \eqref{newNSFKc}, when evaluated at equilibrium states, coincide with those of the NSF system. Consequently, we may employ the Hessian of the standard entropy function, $\cE = - \rho \eta$, as symmetrizer (see Appendix \ref{ConvExtN-S}). If we regard $\cE$ as a function of the conserved quantities for the NSF system, $V = f^0(U)$, then the properties of its Hessian $D_V^2 \cE$ yield,
\[
\begin{aligned}
D_Uf^0(\bU)^\top &D_V^2 \cE(f^0(\bU)) F^0(U,U_x)_t = \\
&= D_U f^0(\bU)^\top D_V^2 \cE(f^0(\bU)) D_U F^0(\bU,0) \partial_t \Big[ D_U F^0(\bU,0)^{-1} \big( F^0(U,U_x) - F^0(\bU,0) \big) \Big]\\
&= D_U f^0(\bU)^\top D_V^2 \cE(f^0(\bU)) D_U f^0(\bU) W_t\\
&= A^0(\bU) W_t,
\end{aligned}
\]
where $A^0(\bU)$ is given by \eqref{15_1}. Likewise,
\[
\begin{aligned}
D_Uf^0(\bU)^\top &D_V^2 \cE(f^0(\bU)) D_U F^1(\bU,0) D_UF^0(\bU,0)^{-1} F^0(U,U_x)_x = \\
&= D_Uf^0(\bU)^\top D_V^2 \cE(f^0(\bU)) D_U f^1(\bU) \partial_x \Big[ D_U F^0(\bU,0)^{-1} \big( F^0(U,U_x) - F^0(\bU,0) \big) \Big]\\
&= D_Uf^0(\bU)^\top D_V^2 \cE(f^0(\bU)) D_U f^1(\bU) W_x\\
&= A^1(\bU) W_x,
\end{aligned}
\]
\[
\begin{aligned}
D_Uf^0(\bU)^\top &D_V^2 \cE(f^0(\bU)) G(\bU) D_U F^0(\bU,0)^{-1} F^0(U,U_x)_{xx} = \\
&= D_Uf^0(\bU)^\top D_V^2 \cE(f^0(\bU)) G(\bU) \partial_x^2 \Big[ D_U F^0(\bU,0)^{-1} \big( F^0(U,U_x) - F^0(\bU,0) \big) \Big]\\
&= D_Uf^0(\bU)^\top D_V^2 \cE(f^0(\bU)) G(\bU)  W_{xx}\\
&= B(\bU) W_{xx},
\end{aligned}
\]
and
\[
\begin{aligned}
D_Uf^0(\bU)^\top &D_V^2 \cE(f^0(\bU)) H(\bU) D_U F^0(\bU,0)^{-1} F^0(U,U_x)_{xxx} = \\
&= D_Uf^0(\bU)^\top D_V^2 \cE(f^0(\bU)) H(\bU) \partial_x^3 \Big[ D_U F^0(\bU,0)^{-1} \big( F^0(U,U_x) - F^0(\bU,0) \big) \Big]\\
&= D_Uf^0(\bU)^\top D_V^2 \cE(f^0(\bU)) H(\bU)  W_{xxx}\\
&= C(\bU) W_{xxx},
\end{aligned}
\]
where $A^1(\bU)$ and $B(\bU)$ are given by \eqref{15_2} and \eqref{15_3}, respectively (see Appendix \ref{ConvExtN-S} for details). Moreover, here the capillarity coefficient matrix is defined as 
\begin{equation}
\label{defC}
C(\bU) := D_Uf^0(\bU)^\top D_V^2 \cE(f^0(\bU)) H(\bU).
\end{equation}
As a result, if we multiply system \eqref{newNSFKc} on the left by $D_Uf^0(\bU)^\top D_V^2 \cE(f^0(\bU))$ we obtain 
\begin{equation}
\label{Wsystem}
\begin{aligned}
A^0(\bU) W_t + A^1(\bU) W_x &- B(\bU) W_{xx} - C(\bU) W_{xxx} = \\&= \partial_x \Big[ r(U,U_x) + R(U,U_x) U_x + I(U,U_x,U_{xx}) + g(U,U_x) \Big],
\end{aligned}
\end{equation} 
where
\begin{equation}
\label{nonlinterms}
\begin{aligned}
r(U,U_x) &:= D_Uf^0(\bU)^\top D_V^2 \cE(f^0(\bU)) \widetilde{r}(U,U_x),\\
R(U,U_x) &:= D_Uf^0(\bU)^\top D_V^2 \cE(f^0(\bU)) \widetilde{R}(U,U_x),\\
I(U,U_x,U_{xx}) &:= D_Uf^0(\bU)^\top D_V^2 \cE(f^0(\bU)) \widetilde{I}(U,U_x,U_{xx}),\\
g(U,U_x) &:= D_Uf^0(\bU)^\top D_V^2 \cE(f^0(\bU)) \widetilde{g}(U,U_x).
\end{aligned}
\end{equation}
For convenience of the reader, let us write here the coefficients of system \eqref{Wsystem} (which are computed in Appendix \ref{ConvExtN-S}):
\begin{equation}
\label{barA0A1B}
\begin{aligned}
A^{0}(\bU) &= \frac{1}{\bthe}\begin{pmatrix}
\bp_{\rho}/ \brho & 0 & 0 \\ 0 & \brho & 0 \\ 0 & 0 &  \be_{\theta} \brho / \bthe
\end{pmatrix},\\
A^{1}(\bU) &= \frac{1}{\bthe} \begin{pmatrix}
 \bp_{\rho} \bu / \brho & \bp_{\rho} & 0 \\ \bp_{\rho} & \brho \, \bu  & \bp_{\theta}  \\ 0 & \bp_{\theta} & \brho \, \bu \, \be_{\theta}  / \bthe
\end{pmatrix},\\
B(\bU) &= \frac{1}{\bthe}  \begin{pmatrix}
0 & 0 & 0 \\ 0 & \bmu & 0 \\ 0 & 0 & \balp/ \bthe
\end{pmatrix}.
\end{aligned}
\end{equation}
Furthermore, after some straightforward algebra, we use identities \eqref{idstar}, \eqref{7Z} and \eqref{inverseDf0} in Appendix \ref{ConvExtN-S} in order to obtain the capillarity coefficient matrix,
\begin{equation}
\label{barC}
C(\bU) = \frac{1}{\bthe}\begin{pmatrix}
0 & 0 & 0 \\ \bk \brho & 0 & 0 \\ 0 & 0 & 0
\end{pmatrix}.
\end{equation}

To sum up, for a given solution $U$ to the Cauchy problem for system \eqref{NSFKc}, hypotheses \hyperref[H1]{\rm{(H$_1$)}} - \hyperref[H4]{\rm{(H$_4$)}} allow us to define new perturbation variables $W$ and to recast the equations as a partially symmetric system for $W$ of the form \eqref{Wsystem}. The change of variables is valid thanks to hypothesis \hyperref[H4]{\rm{(H$_4$)}} (see Proposition \ref{propgoodW}). Notice that the left hand side of the resulting system \eqref{Wsystem} contains constant coefficient matrices, from which $A^{0}(\bU)$, $A^{1}(\bU)$ and $B(\bU)$ are symmetric, whereas $C(\bU)$ is not (and hence the term partially symmetric). Also note that $A^{0}(\bU)$ is positive definite and invertible, and that $B(\bU)$ is positive semi-definite. It is important to observe, in addition, that the right hand side of \eqref{Wsystem} is written in divergence form. 

\subsection{Regularity of $W$ and estimates on the nonlinear terms}
\label{secregW}

In order to close the energy estimates at a nonlinear level (see Section \ref{secglobal} below), it is crucial to examine the nonlinear terms appearing on the right hand side of system \eqref{Wsystem}. 

\begin{lemma}
\label{lemorderN}
The nonlinear terms appearing on the right hand side of system \eqref{Wsystem}, namely,
\begin{equation}
\label{deftiN}
\tiN(U,U_x,U_{xx}) := r(U,U_x) + R(U,U_x) U_x + I(U,U_x,U_{xx}) + g(U,U_x),
\end{equation}
are of order
\[
\begin{aligned}
\tiN(U,U_x,U_{xx}) = O\Big(|U-\bU|^2 + |U-\bU||U_x| + |U-\bU||\rho_{xx}| + |U_x|^2 + |\rho_x||\rho_{xx}|\Big) \begin{pmatrix} 0 \\ 1 \\1 
\end{pmatrix},
\end{aligned}
\]
as $|\partial_x^j(U - \bU)| \to 0$, $j =0,1$.
\end{lemma}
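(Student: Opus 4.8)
The plan is to prove the claimed order estimate by inspecting each of the four building blocks $r$, $R\,U_x$, $I$, and $g$ defined in \eqref{nonlinterms} and \eqref{deftildeg}, and tracking both the order of vanishing in the perturbation and the location of the surviving (nonzero) components. Since the prefactor $D_Uf^0(\bU)^\top D_V^2\cE(f^0(\bU))$ is a fixed invertible matrix, it does not affect the order of vanishing, but one must check that it does not destroy the structural fact that the first component vanishes; this is why the statement carries the column vector $(0,1,1)^\top$ and not $(1,1,1)^\top$. I would begin by recording that the first component of $\widetilde r$, $\widetilde R U_x$, $\widetilde I$ and $\widetilde g$ is identically zero: for $\widetilde g$ this is immediate from \eqref{deftildeg}; for $\widetilde r$ it follows because the first entries of $F^1$ and $F^0$ are $\rho u$ and $\rho$, which are linear in $U$, so the Taylor remainder in the definition of $\widetilde r$ kills the first slot; for $\widetilde R$ and $\widetilde I$ it follows from the fact that $G(U)$ and $H(U)$ in \eqref{defGH} have vanishing first row, together with $D_UF^0$ being lower triangular with first row $(1,0,0)$. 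One then notes that left-multiplication by $D_Uf^0(\bU)^\top D_V^2\cE(f^0(\bU))$ — which is itself block/triangular-compatible with this structure, as can be read off from \eqref{barA0A1B} — preserves the vanishing of the first component, giving the $(0,1,1)^\top$ factor.

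Next I would estimate the order of vanishing term by term. For $r$: $\widetilde r$ is the second-order Taylor remainder of the smooth map $(U,U_x)\mapsto F^1(U,U_x)-D_UF^1(\bU,0)D_UF^0(\bU,0)^{-1}(F^0(U,U_x)-F^0(\bU,0))$ about $(\bU,0)$, and since the linear part has been subtracted off, Taylor's theorem on the compact set where the solution lives (using classicality and the $L^\infty$ bounds from Remark \ref{remLinftybound}) gives $\widetilde r = O(|U-\bU|^2 + |U-\bU||U_x| + |U_x|^2)$; here only $\rho_x$ enters among the components of $U_x$ because $F^0,F^1$ depend on $U_x$ solely through $\rho_x$, which actually sharpens $|U_x|$ to $|\rho_x|$ but the stated bound is weaker and hence fine. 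For $R\,U_x$: the bracket $G(U)D_UF^0(U,U_x)^{-1}-G(\bU)D_UF^0(\bU,0)^{-1}$ is a smooth function of $(U,\rho_x)$ vanishing at $(\bU,0)$, hence $O(|U-\bU|+|\rho_x|)$; multiplying by $D_UF^0(U,U_x)$ (bounded) and then by $U_x$ yields $R\,U_x = O((|U-\bU|+|\rho_x|)|U_x|) = O(|U-\bU||U_x|+|U_x|^2)$. For $g$: directly from \eqref{deftildeg}, every term is quadratic in $(\rho_x,\theta_x,u_x)$ with bounded coefficients, so $g = O(|U_x|^2)$.

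The term $I$ is the one requiring the most care, and I expect it to be the main obstacle, precisely because it contains a third-derivative term $U_{xxx}$ which must be shown to disappear or be reabsorbed. Looking at $\widetilde I$ in the definition above, the potentially dangerous pieces are $D_{U_x}F^0(U,U_x)U_{xx}$ and $H(\bU)D_UF^0(\bU,0)^{-1}D_{U_x}F^0(U,U_x)U_{xxx}$. The key computation is that $D_{U_x}F^0(U,U_x)$ has only one nonzero entry, in its third row/first column, proportional to $\rho_x$ (since $F^0$ depends on $U_x$ only through the $\rho(\kappa-\theta\kappa_\theta)\rho_x^2$ term, whose $\rho_x$-derivative is $O(\rho_x)$); consequently $D_{U_x}F^0(U,U_x)U_{xxx}$ has only a third component, equal to $O(|\rho_x||\rho_{xxx}|)$, and when hit by $H(\bU)D_UF^0(\bU,0)^{-1}$ — whose relevant columns are the first two, as $H(\bU)$ has nonzero entries only in its first column (entries $\bk\brho$, $\bk\brho\bu$) — this third component is annihilated. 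Hence the $U_{xxx}$ contribution vanishes identically. The same structural observation shows $\partial_x(D_{U_x}F^0(U,U_x))U_{xx} = O((|\rho_{xx}|+|U-\bU||\rho_x|+|\rho_x|^2)|\rho_{xx}|)$ surviving only in the third slot and then annihilated, and $\partial_x(D_UF^0(U,U_x))U_x = O(|U_x|^2 + |\rho_x||\rho_{xx}| + |U-\bU||U_x|)$ after differentiating the smooth matrix field; the bracket $H(U)D_UF^0(U,U_x)^{-1}-H(\bU)D_UF^0(\bU,0)^{-1}$ times $D_UF^0(U,U_x)U_{xx}$ is $O((|U-\bU|+|\rho_x|)|\rho_{xx}|)$; and the first term $G(\bU)D_UF^0(\bU,0)^{-1}D_{U_x}F^0(U,U_x)U_{xx}$ is $O(|\rho_x||\rho_{xx}|)$. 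Collecting every contribution and majorizing $|U_x|$, $|\rho_x|$, $|\rho_{xx}|$ appropriately produces exactly the claimed bound. I would close by remarking that all smoothness and boundedness of the coefficient fields used here is guaranteed on the set $\{U(\cdot,t)\in\cU\}$ by hypotheses \hyperref[H1]{\rm{(H$_1$)}}--\hyperref[H4]{\rm{(H$_4$)}} together with the classical-solution regularity and the $L^\infty$ bounds recorded in Remark \ref{remLinftybound}.
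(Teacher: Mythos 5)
Your overall strategy is the same as the paper's in Appendix~\ref{appB}: show the first component of each of $\widetilde r$, $\widetilde R\,U_x$, $\widetilde I$, $\widetilde g$ vanishes, note that left-multiplication by $D_Uf^0(\bU)^\top D_V^2\cE(f^0(\bU))$ preserves this because that matrix has first row $(\bp_\rho/\brho\,\bthe,\,0,\,0)$, and then estimate each piece by Taylor expansion, with the key structural observation that the $U_{xxx}$ and $\partial_x(D_{U_x}F^0)U_{xx}$ contributions inside $\widetilde I$ are annihilated since $D_{U_x}F^0$ has a nonzero entry only in its third row while $H(\bU)D_UF^0(\bU,0)^{-1}$ has nonzero entries only in its first column. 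These are exactly the steps the paper carries out.

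However, your justification for $\widetilde r_1 \equiv 0$ does not hold as written. You argue that the first entries of $F^1$ and $F^0$, namely $\rho u$ and $\rho$, ``are linear in $U$'' and that this kills the first slot. But $\rho u$ is not linear in $U$, and even if it were, $\widetilde r$ is not the Taylor remainder of $F^1$ in the variable $U$: it measures the deviation of $F^1$ from an affine function of the conserved quantity $F^0$. The actual mechanism is the identity $F^1_1 = F^0_2$ (the first flux equals the second conserved quantity — this is the continuity equation). This forces $D_UF^1_1(\bU,0)$ to equal the second row of $D_UF^0(\bU,0)$, so the first row of $D_UF^1(\bU,0)\,D_UF^0(\bU,0)^{-1}$ is exactly $(0,1,0)$, and the cancellation in $\widetilde r_1$ is exact, as the paper verifies by direct computation following \eqref{structDD}. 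Put differently, $F^1_1$ is a linear function of the conserved variables $F^0$, not of the state variables $U$; that is the fact you need, and the way you phrase it would not deliver the conclusion. A smaller slip: you call the first two columns of $H(\bU)D_UF^0(\bU,0)^{-1}$ the ``relevant'' ones, but only the first column is nonzero — in fact $H(\bU)D_UF^0(\bU,0)^{-1} = H(\bU)$ — which makes the annihilation argument even more immediate. With the $\widetilde r_1$ justification repaired, your proof is sound and matches the paper's.
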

\begin{proof}
See Appendix \ref{appB}.
\end{proof}

\begin{remark}
\label{remfirstentryzero}
Notice that the first entry of the nonlinear terms in system \eqref{Wsystem} is equal to zero. This feature will play a crucial role in the establishment of the energy estimates with the appropriate regularity for the density variable in Section \ref{secglobal}. A quick way to verify this property is simply to compute the evolution equation for the first coordinate of the $W$ variables. Indeed, from inspection of the formula for $D_U F^0(\bU,0)^{-1} = D_Uf^0(\bU)^{-1}$ in \eqref{inverseDf0} and substitution into \eqref{defWvar} we find that the first and second coordinates of $W$ are $W_1 = \rho - \brho$ and $W_2 = \rho (u - \bu) / \brho$, respectively. Hence, using the expressions for the coefficients in \eqref{barA0A1B} (notice that the first rows of $B(\bu)$ and of $C(\bU)$ are zero) one easily arrives at the left hand side of the first equation of system \eqref{Wsystem}, 
\[
\frac{\bp_\rho}{\brho \,\bthe} \partial_t W_1 + \frac{\bp_\rho \bu}{\brho \,\bthe} \partial_x W_1 + \frac{\bp_\rho}{\bthe} \partial_x W_2 = \frac{\bp_\rho}{\brho \, \bthe} \big( \rho_t + (\rho u)_x \big) = 0,
\]
which is equal to zero because of the continuity equation in system \eqref{NSFK}. Consequently, we confirm that the first entry of the right hand side of \eqref{Wsystem} must be equal to zero as well. 
\end{remark}

From Proposition \ref{propgoodW} we know that the change of variables is well defined. But we need to examine the regularity of the new perturbation variables $W$. For that purpose we require some auxiliary classical results.

\begin{lemma}
\label{lemaux1}
$\,$
\begin{itemize}
\item[(a)] For any $s > n/2$, $n \in \N$, the space $H^s(\R^n)$ is a Banach algebra. Moreover, there exists a constant $C_s > 0$ such that
\[
\| uv \|_s \leq C_s \|u\|_s \|v \|_s,
\] 
for all $u,v \in H^s(\R^n)$.
\item[(b)] Let $u, v \in H^s(\R^n) \cap L^\infty(\R^n)$. Then
\[
\| uv \|_s \leq C \big( \| u \|_s \| v \|_\infty + \|u \|_\infty \|v \|_s \big),
\]
for some uniform constant $C > 0$.
\item[(c)] Let $s \geq 0$ and $k \geq 0$ be such that $s+k \geq [\frac{n}{2}] +1$. Assume that $u \in H^s(\R^n)$, $v \in H^k(\R^n)$. Then for $\ell = \min \{ s, k, s+k - [\frac{n}{2}] -1 \}$ we have $uv \in H^\ell(\R^n)$ and there exists a uniform $C > 0$ such that
\[
\| uv \|_\ell \leq C \| u \|_s \| v \|_k.
\]
In particular, if $s \geq [\frac{n}{2}] +1$, $0 \leq \ell \leq s$ and $u \in H^s(\R^n)$, $v \in H^\ell(\R^n)$, then
\[
\| uv \|_\ell \leq C \| u \|_s \| v \|_\ell.
\]
\end{itemize}
\end{lemma}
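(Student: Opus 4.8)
These three statements are classical facts from harmonic analysis; in the write-up one could simply cite standard references (Moser-type inequalities, the Kato--Ponce product estimate, or textbook treatments of Sobolev spaces). If self-contained proofs are wanted, the plan is to argue on the Fourier side with the normalisation $\widehat{fg} = (2\pi)^{-n/2}\,\widehat f \ast \widehat g$ and the weight $\langle\xi\rangle := (1+|\xi|^2)^{1/2}$, so that $\|u\|_s = \|\langle\cdot\rangle^s\widehat u\|_{L^2}$. For part (a), I would bound $\langle\xi\rangle^s|\widehat{uv}(\xi)|$ using the elementary inequality $\langle\xi\rangle^s\leq C_s(\langle\xi-\eta\rangle^s + \langle\eta\rangle^s)$, valid for $s\geq 0$, which gives
\[
\langle\xi\rangle^s\,|\widehat{uv}(\xi)| \leq C_s\,\big((\langle\cdot\rangle^s|\widehat u|)\ast|\widehat v|\big)(\xi) + C_s\,\big(|\widehat u|\ast(\langle\cdot\rangle^s|\widehat v|)\big)(\xi).
\]
Young's inequality $\|f\ast g\|_{L^2}\leq\|f\|_{L^2}\|g\|_{L^1}$ applied to each term, together with the bound $\||\widehat v|\|_{L^1}\leq\|\langle\cdot\rangle^{-s}\|_{L^2}\|v\|_s$ from Cauchy--Schwarz — finite precisely because $s>n/2$ — then yields $\|uv\|_s\leq C_s\|u\|_s\|v\|_s$. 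Since $H^s(\R^n)$ is complete, this makes it a Banach algebra (after rescaling the norm if one insists on submultiplicativity with constant $1$).

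For part (b), the plan is a Moser-type estimate. When $s$ is an integer, for each multi-index $\gamma$ with $|\gamma|\leq s$ the Leibniz rule expands $\partial^\gamma(uv)$ into a sum of terms $\partial^\beta u\,\partial^{\gamma-\beta}v$; bounding each factor by the Gagliardo--Nirenberg inequality $\|\partial^\beta u\|_{L^{2s/|\beta|}}\leq C\|u\|_{L^\infty}^{1-|\beta|/s}\|D^s u\|_{L^2}^{|\beta|/s}$ (the cases $|\beta|\in\{0,s\}$ being handled directly in $L^\infty$), applying Hölder with the conjugate exponents $2s/|\beta|$ and $2s/(s-|\beta|)$, and then Young's inequality for products, gives $\|\partial^\beta u\,\partial^{\gamma-\beta}v\|_{L^2}\leq C(\|u\|_\infty\|v\|_s+\|u\|_s\|v\|_\infty)$; summation over $\beta$ and $\gamma$, together with $\|uv\|_{L^2}\leq\|u\|_\infty\|v\|_{L^2}$, concludes. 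For non-integer $s$ one instead uses a Littlewood--Paley (Bony) decomposition $uv = T_uv + T_vu + R(u,v)$, with $\|T_uv\|_s\leq C\|u\|_\infty\|v\|_s$, $\|T_vu\|_s\leq C\|v\|_\infty\|u\|_s$, and the resonant remainder controlled using $s>0$.

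For part (c), again on the Fourier side, set $f:=\langle\cdot\rangle^s|\widehat u|$ and $g:=\langle\cdot\rangle^k|\widehat v|$, so $\|f\|_{L^2}=\|u\|_s$ and $\|g\|_{L^2}=\|v\|_k$; then
\[
\langle\xi\rangle^\ell\,|\widehat{uv}(\xi)| \leq C\int \frac{\langle\xi\rangle^\ell}{\langle\xi-\eta\rangle^s\,\langle\eta\rangle^k}\,f(\xi-\eta)\,g(\eta)\,d\eta.
\]
Splitting $\langle\xi\rangle^\ell\leq C(\langle\xi-\eta\rangle^\ell+\langle\eta\rangle^\ell)$ reduces this to two convolutions of the form $(\langle\cdot\rangle^{\ell-s}f)\ast(\langle\cdot\rangle^{-k}g)$ and $(\langle\cdot\rangle^{-s}f)\ast(\langle\cdot\rangle^{\ell-k}g)$. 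Here $\ell\leq s$ and $\ell\leq k$ keep the surplus exponents on $f$ and $g$ nonpositive, while $s+k-\ell\geq[\tfrac{n}{2}]+1>n/2$ guarantees that the remaining negative power $\langle\cdot\rangle^{-(s+k-\ell)}$ can be distributed between the two factors so that, via a suitable choice of Young exponents $(p,q)$ with $1/p+1/q=3/2$ (and Cauchy--Schwarz at the boundary), each convolution is bounded in $L^2$ by $C\|f\|_{L^2}\|g\|_{L^2}=C\|u\|_s\|v\|_k$. The final special case follows by taking $k=\ell$ with $s\geq[\tfrac{n}{2}]+1$, since then $\min\{s,\ell,s+\ell-[\tfrac{n}{2}]-1\}=\ell$.

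The step I expect to be the main obstacle is part (c): parts (a) and (b) are essentially one-liners once the right inequality is invoked, but in (c) the kernel $\langle\xi\rangle^\ell\langle\xi-\eta\rangle^{-s}\langle\eta\rangle^{-k}$ is not uniformly $L^1$ in $\eta$, so one must split frequency space carefully and track the exponents through the low--high, high--low and high--high regimes — equivalently, choose the Young exponents correctly and treat the boundary cases $\ell=s$, $\ell=k$ or $k=0$ separately — verifying that $s+k\geq[\tfrac{n}{2}]+1$ is precisely the hypothesis that makes the relevant weight integrable.
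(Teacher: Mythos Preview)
Your proposal is correct and matches the paper's approach: the paper simply cites standard references (Iorio--Iorio for (a), Hattori--Li for (b), Nirenberg and Kawashima's thesis for (c)) without giving any details, so your opening sentence already reproduces its proof. The Fourier-side and Moser/Littlewood--Paley sketches you add are sound and go well beyond what the paper provides.
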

\begin{proof}
For the proof of (a) see Theorem 7.77, p. 359, in Iorio and Iorio \cite{IoIo01}. For (b) see Lemma 3.2 in \cite{HaLi96a}. The proof of (c) is a corollary of the interpolation inequalities obtained by Nirenberg \cite{Nir59} (see also Corollary 2.2 and Lemmata 2.1 and 2.3 in \cite{KaTh83}). 
\end{proof}

We also need some estimates on composite functions.

\begin{lemma}
\label{lemaux2}
Let $s \geq 1$ and suppose that $Y = (Y_1, \ldots, Y_m)$, $m \in \N$, $Y_i \in H^s(\R^n) \cap L^\infty(\R^n)$, for all $1 \leq i \leq m$. Let $\Lambda = \Lambda(Y)$, $\Lambda : \R^m \to \R^m$, be a $C^\infty$ function. Then for each $1 \leq j \leq s$ there hold $\partial_x \Lambda(Y) \in H^{j-1}(\R^n)$ and
\[
\| \partial_x \Lambda (Y) \|_{j-1} \leq C M \big( 1+\| Y \|_\infty \big)^{j-1} \| \partial_x Y \|_{j-1},
\]
where $C > 0$ is a uniform constant and
\[
M = \sum_{k=1}^j \sup_{\substack{V \in \R^m\\ |V|\leq \| Y \|_\infty}} | D^k_Y \Lambda(Y)| > 0.
\]
\end{lemma}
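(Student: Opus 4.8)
The plan is to reduce everything to estimates on the $L^2$ norms $\|\partial_x^p\Lambda(Y)\|_0$ for $1\le p\le j$, using the identity $\|\partial_x\Lambda(Y)\|_{j-1}^2=\sum_{p=1}^{j}\|\partial_x^p\Lambda(Y)\|_0^2$; since $j\le s$, the components of $Y$ lie in $H^s\subset H^j$, so all these derivatives make sense, and it suffices to treat each scalar component $\Lambda_\ell$ of $\Lambda=(\Lambda_1,\dots,\Lambda_m)$ separately and sum over $\ell$ at the end. First I would apply the Fa\`a di Bruno formula to write $\partial_x^p\big(\Lambda_\ell(Y)\big)$ as a finite sum of terms of the form
\[
c\,\big(\partial_{y_{i_1}}\!\cdots\partial_{y_{i_k}}\Lambda_\ell\big)(Y)\;\partial_x^{a_1}Y_{i_1}\cdots\partial_x^{a_k}Y_{i_k},
\]
with universal combinatorial constants $c$, indices $1\le k\le p$ and $1\le i_r\le m$, and exponents $a_r\ge 1$ with $a_1+\cdots+a_k=p$.

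For each such term I would first bound the coefficient pointwise by $\big|\big(\partial_{y_{i_1}}\!\cdots\partial_{y_{i_k}}\Lambda_\ell\big)(Y(x))\big|\le\sup_{|V|\le\|Y\|_\infty}|D_Y^k\Lambda(V)|$, which is legitimate precisely because $Y\in L^\infty$, and this quantity is one of the summands defining $M$. For the product of derivatives I would invoke the Gagliardo--Nirenberg interpolation inequalities (Nirenberg \cite{Nir59}; cf. also Lemma \ref{lemaux1}(c) and \cite{KaTh83}): for $1\le a\le p$ and a scalar $v\in H^p(\R^n)\cap L^\infty(\R^n)$,
\[
\|\partial_x^a v\|_{L^{2p/a}}\le C\,\|v\|_\infty^{1-a/p}\,\|\partial_x^p v\|_0^{a/p},
\]
with $C$ uniform in $n$. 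Since $\sum_{r=1}^{k}\tfrac{a_r}{2p}=\tfrac12$, H\"older's inequality with the exponents $2p/a_r$ then yields
\[
\big\|\partial_x^{a_1}Y_{i_1}\cdots\partial_x^{a_k}Y_{i_k}\big\|_0\le C\,\|Y\|_\infty^{k-1}\,\|\partial_x^p Y\|_0,
\]
because the powers $1-a_r/p$ add up to $k-1$ and the powers $a_r/p$ add up to $1$; moreover $\|\partial_x^p Y\|_0=\|\partial_x^{p-1}(\partial_x Y)\|_0\le\|\partial_x Y\|_{p-1}\le\|\partial_x Y\|_{j-1}$ since $p\le j$.

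Combining these bounds term by term gives $\|\partial_x^p\Lambda(Y)\|_0\le C\sum_{k=1}^{p}\big(\sup_{|V|\le\|Y\|_\infty}|D_Y^k\Lambda(V)|\big)\,\|Y\|_\infty^{k-1}\,\|\partial_x Y\|_{j-1}$, and summing the squares over $p=1,\dots,j$ produces
\[
\|\partial_x\Lambda(Y)\|_{j-1}\le C\Big(\sum_{k=1}^{j}\big(\sup_{|V|\le\|Y\|_\infty}|D_Y^k\Lambda(V)|\big)\,\|Y\|_\infty^{k-1}\Big)\,\|\partial_x Y\|_{j-1}.
\]
Since $\|Y\|_\infty^{k-1}\le(1+\|Y\|_\infty)^{j-1}$ for every $1\le k\le j$, the parenthesis is at most $(1+\|Y\|_\infty)^{j-1}M$, which is the claimed inequality, and the finiteness of the right-hand side also yields $\partial_x\Lambda(Y)\in H^{j-1}(\R^n)$. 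The one place where care is needed is the bookkeeping of the Gagliardo--Nirenberg exponents — verifying that the Lebesgue exponents $2p/a_r$ are H\"older-conjugate for each admissible partition, that the resulting powers of $\|Y\|_\infty$ total exactly $k-1$, and that the interpolation constants are uniform in the dimension $n$; the remainder of the argument is routine.
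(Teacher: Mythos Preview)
Your argument is correct and is precisely the standard Moser-type proof of this composition estimate: Fa\`a di Bruno, pointwise control of the outer derivatives via $\|Y\|_\infty$, Gagliardo--Nirenberg interpolation $\|\partial_x^a v\|_{L^{2p/a}}\le C\|v\|_\infty^{1-a/p}\|\partial_x^p v\|_0^{a/p}$, and H\"older with conjugate exponents $2p/a_r$. The paper itself does not supply a proof but simply refers to Vol\cprime pert--Hudjaev \cite{VoH72} and Lemma~2.4 of Kawashima's thesis \cite{KaTh83}; your write-up is essentially a reconstruction of the argument those references contain, so there is no methodological difference to discuss.
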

\begin{proof}
See Vol$'$pert and Hudjaev \cite{VoH72} (see also Lemma 2.4 in \cite{KaTh83}).
\end{proof}

Now we prove the following result.

\begin{lemma}
\label{lemregW}
Assume \hyperref[H1]{\rm{(H$_1$)}} - \hyperref[H4]{\rm{(H$_4$)}}. Suppose that $\rho_0 - \brho \in H^{s+1}(\R)$, $u_0 - \bu, \theta_0 - \bthe \in H^s(\R)$, with $s \geq 3$, satisfy the hypotheses of the local existence theorem \ref{themlocale}. If
\[
U = (\rho,u,\theta) \in X_s\big((0,T); \tfrac{1}{2}m_1,2M_1,\tfrac{1}{2}m_2,2M_2\big),
\]
denotes the local solution to the Cauchy problem \eqref{NSFK} with initial data $U(0) = (\rho_0, u_0, \theta_0)$ for some $0 < T \leq T_1$, and where the constants $m_j, M_j$, $j = 1,2$, are such that $U(\cdot, t) \in \cU$ \emph{a.e.} in $x\in \R$ and for all $t \in [0,T]$, then the new perturbation variables,
\[
W= D_Uf^0(\bU)^{-1} \big( F^0(U,U_x) - F^0(\bU,0)\big),
\]
satisfy:
\begin{itemize}
\item[(a)] $W \in X_s\big((0,T); \tfrac{1}{2}\bm_1,2\bM_1,\tfrac{1}{2}\bm_2,2\bM_2\big)$ for the same $T > 0$ and some constants $\bm_j$, $\bM_j$, $j = 1,2$, $\bar{m}_{1},\bar{M}_{1}>0$, depending on $a_0$, $\bU$, $m_j$, $M_j$.
\item[(b)] There exists a uniform constant $C_0 > 0$, depending only on $a_0, \bU, m_j, M_j$ (and hence independent of $T$) such that
\begin{equation}
\label{equivestUW}
C_0^{-1} \vertiii{U(t) - \bU}_s \leq \vertiii{W(t)}_s \leq C_0 \vertiii{U(t) - \bU}_s,
\end{equation}
for all $t \in [0,T]$. If, in addition, $U(0) -\bU \in L^1(\R)^3$ then the initial condition for $W$ satisfies the estimate
\begin{equation}
\label{L1estUW}
\| W(0) \|_{L^1} \leq C \| U(0) -\bU \|_{L^1} + C \| \rho_0 - \brho \|_1^2,
\end{equation}
for some uniform $C > 0$.
\item[(c)] $W$ solves the nonlinear system \eqref{Wsystem}.
\end{itemize}
\end{lemma}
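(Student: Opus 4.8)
The plan is to extract first the structural identity that drives all three parts. By Remark \ref{remdiffcnsrvd} one has $D_UF^0(\bU,0)=D_Uf^0(\bU)$, and since $F^0$ depends on $\rho_x$ only through $\vep = e + (\kappa-\theta\kappa_\theta)\rho_x^2$, the map $\Phi(U,\rho_x):=D_Uf^0(\bU)^{-1}\big(F^0(U,\rho_x)-F^0(\bU,0)\big)$ is a polynomial of degree two in $\rho_x$ with smooth coefficients in $(\rho,\theta)$, say $\Phi = \Phi_0(U)+\Phi_2(U)\rho_x^2$, with $\Phi_0(\bU)=0$, $D_U\Phi_0(\bU)=\Id$ and $\partial_{\rho_x}F^0(\bU,0)=0$ (as $\partial_{\rho_x}\vep=2(\kappa-\theta\kappa_\theta)\rho_x$ vanishes at $\rho_x=0$). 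Hence, by Taylor's theorem,
\[
W=\Phi(U,\rho_x)=(U-\bU)+\cR(U,\rho_x),\qquad \cR(U,\rho_x)=O\big(|U-\bU|^2+|\rho_x|^2\big),
\]
uniformly on the bounded set where $U$ ranges, with $\cR$ a finite sum of terms of the form (smooth function of $U$)$\times$(quadratic monomial in the entries of $U-\bU$) plus (smooth function of $(\rho,\theta)$)$\times\rho_x^2$; in particular $W_1=\rho-\brho$, $W_2=\rho(u-\bu)/\brho$ (cf. Remark \ref{remfirstentryzero}), and $W_3$ carries a term proportional to $\rho(\kappa-\theta\kappa_\theta)\rho_x^2$. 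Part (c) then follows at once: since $s\ge3$, Theorem \ref{themlocale} gives a \emph{classical} solution $U$ and Proposition \ref{propgoodW} ensures $D_UF^0(U,U_x)$ is invertible along it, so every step of Subsection \ref{secnewpert} — adding and subtracting $F^j(\bU,0)$, the chain rule, and left multiplication by the constant matrix $D_Uf^0(\bU)^\top D_V^2\cE(f^0(\bU))$ — is legitimate, and the derivation of \eqref{Wsystem} from \eqref{NSFKc} applies verbatim once part (a) supplies the regularity needed to interpret \eqref{Wsystem} in the relevant spaces.

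For part (a) I would verify the defining conditions of $X_s$ for $W$ componentwise using $W=(U-\bU)+\cR$. The linear part $(U-\bU)$ already lies in the required class, so only $\cR$ needs estimating. A typical term $a(U)\,(U-\bU)_i(U-\bU)_j$ lies in $C((0,T);H^s)\cap C^1((0,T);H^{s-2})$ with $x$-derivative in $L^2((0,T);H^s)$ because $a(U)-a(\bU)$ has this regularity by the composite-function estimate Lemma \ref{lemaux2}, $(U-\bU)_i$ does by hypothesis, and products are controlled by the algebra/product estimates of Lemma \ref{lemaux1}. The delicate terms are those carrying $\rho_x^2$: here $\rho_x\in C((0,T);H^s)\cap L^2((0,T);H^{s+1})$ and $\|\rho_x(t)\|_\infty$ is uniformly bounded (Remark \ref{remLinftybound}), so Lemma \ref{lemaux1}(b) gives $\|\rho_x^2(t)\|_{s+1}\le C\|\rho_x(t)\|_\infty\|\rho_x(t)\|_{s+1}$, whence $\rho_x^2\in C((0,T);H^s)\cap L^2((0,T);H^{s+1})$ and $\partial_x(\rho_x^2)\in L^2((0,T);H^s)$; multiplying by the bounded smooth coefficient $\rho(\kappa-\theta\kappa_\theta)$ preserves this, and for the time derivative one uses $\partial_t(\rho_x^2)=2\rho_x\,\partial_x\rho_t$ with $\rho_t\in C((0,T);H^{s-1})$, so that $\rho_x\,\partial_x\rho_t\in H^{s-2}$ by Lemma \ref{lemaux1}(c). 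The pointwise bounds defining the $X_s$-class for $W$ follow from the $L^\infty$-estimates of Remark \ref{remLinftybound} (shrinking $a_0$ if necessary), with $\bm_j,\bM_j$ depending only on $a_0,\bU,m_j,M_j$.

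For part (b), the same estimates show that in each norm entering $\vertiii{\cdot}_s$ every term of $\cR$ is bounded by $C\epsilon\,\vertiii{U(t)-\bU}_s$, where $\epsilon$ is the uniform small bound on $\|U(t)-\bU\|_\infty+\|\rho_x(t)\|_\infty$; for instance $\|\rho(\kappa-\theta\kappa_\theta)\rho_x^2\|_s\le C\|\rho_x\|_\infty\|\rho_x\|_s\le C\epsilon\|\rho-\brho\|_{s+1}$. Hence $\vertiii{W(t)}_s\le(1+C\epsilon)\vertiii{U(t)-\bU}_s$, and, writing $U-\bU=W-\cR$ and absorbing $\cR$, also $\vertiii{U(t)-\bU}_s\le(1-C\epsilon)^{-1}\vertiii{W(t)}_s$ once $a_0$ is small enough that $C\epsilon<1$; this gives \eqref{equivestUW} with $C_0=C_0(a_0,\bU,m_j,M_j)$. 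For \eqref{L1estUW}, $\|W(0)\|_{L^1}\le\|U(0)-\bU\|_{L^1}+\|\cR(0)\|_{L^1}$, and since $\cR=O(|U-\bU|^2+|\rho_x|^2)$ pointwise, $\|\cR(0)\|_{L^1}\le C\|U(0)-\bU\|_{L^\infty}\|U(0)-\bU\|_{L^1}+C\|(\rho_0)_x\|_{L^2}^2\le C\|U(0)-\bU\|_{L^1}+C\|\rho_0-\brho\|_1^2$, using $\|U(0)-\bU\|_{L^\infty}\le Ca_0$.

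The only real obstacle I anticipate is the bookkeeping of the $\rho_x^2$ terms in $W_3$: since $\rho_x$ enjoys only $L^2$-in-time control in the top norm $H^{s+1}$, a naive product estimate would demand $L^4$-in-time control of $\|\rho_x\|_{s+1}$; the remedy, applied systematically, is to place one factor $\rho_x$ in $L^\infty$ (via Remark \ref{remLinftybound}) and the other in the Sobolev norm, using Lemma \ref{lemaux1}(b)--(c). Everything else reduces to routine applications of the product and composite-function estimates.
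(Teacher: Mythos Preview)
Your proposal is correct and follows essentially the same route as the paper: decompose $W$ into a linear part plus a quadratic remainder, estimate the remainder via the product/composite-function Lemmata \ref{lemaux1}--\ref{lemaux2} and the $L^\infty$ bounds of Remark \ref{remLinftybound}, and read off (c) from the derivation in Subsection \ref{secnewpert}. The only notable difference is in the lower bound of \eqref{equivestUW}: the paper inverts $\Lambda(U)=f^0(U)-f^0(\bU)$ via the inverse function theorem and applies Lemma \ref{lemaux2} to $\Lambda^{-1}$, whereas you obtain it more directly by writing $U-\bU=W-\cR$ and absorbing $\vertiii{\cR}_s\le C\epsilon\,\vertiii{U-\bU}_s$ into the left-hand side using smallness of $a_0$; both arguments ultimately rely on the same smallness (the paper's absorption step for the $\Gamma^0$ contribution is also driven by $\|\rho_x\|_\infty,\|U-\bU\|_\infty\lesssim a_0$), and yours is marginally cleaner.
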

\begin{proof}
Since the local solution is such that $U(t) \in \cU$ for all $t \in [0,T]$, then it is clear that Proposition \ref{propgoodW} holds, the matrix field $D_U F^0(U,U_x)$ is invertible and, consequently, the new perturbation variables $W$ solve system \eqref{Wsystem} as we have proved in Section \ref{secnewpert}. This shows (c) and we only need to prove (a) and (b).

In the sequel, we suppress the dependence on the time variable for the sake of brevity. Let us write
\[
W = \tiW + D_U f^0(\bU)^{-1} \Gamma^0(U,U_x), \qquad \tiW := D_U f^0(\bU)^{-1} (f^0(U) - f^0(\bU)).
\]
From the expression for $D_U f^0(\bU)^{-1}$ (see \eqref{inverseDf0}) and \eqref{decF0} it is easily verified that $W_1 = \tiW_1 = \rho - \brho$, yielding
\begin{equation}
\label{estW1l}
\| W_1 \|_{\ell+1} = \| \tiW_1 \|_{\ell+1} = \| \rho - \brho \|_{\ell +1},
\end{equation}
for all $0 \leq \ell \leq s$ and all $t\in [0,T]$. In particular, we have $W_1 \in H^{s+1}(\R)$. Also from inspection we have
\[
W_2 = \tiW_2 = - \frac{\bu}{\brho}(\rho - \brho) + \frac{1}{\brho} (\rho u - \brho \,\bu) = \frac{1}{\brho}(\rho - \brho)(u-\bu) + u - \bu.
\]
Therefore, from Lemma \ref{lemaux1} (a) we obtain
\[
\| W_2 \|_s \leq C \big(\| (\rho - \brho)(u - \bu) \|_s+ \| u - \bu \|_s \big) \leq C_s \big( \| \rho - \brho \|_s \| u - \bu \|_s + \| u - \bu \|_s \big),
\]
for some uniform $C > 0$, showing that $W_2 \in H^s(\R)$ for all $t \in [0,T]$. Moreover, since we already know that $\| U - \bU\|_\infty$ is uniformly bounded (see Remark \ref{remLinftybound}), from Lemma \ref{lemaux1} (b) we deduce that for each $0 \leq \ell \leq s$ there holds
\[
\begin{aligned}
\| W_2 \|_\ell^2 &\leq C \big(\| (\rho - \brho)(u - \bu) \|_\ell + \| u - \bu \|_\ell \big)^2 \\
 & \leq C \big( \| \rho - \brho \|_\infty \| u - \bu \|_\ell + \| u - \bu \|_\infty \| \rho - \brho \|_\ell +  \| u - \bu \|_\ell \big)^2\\ 
 & \leq C \big( \| \rho - \brho \|_\ell +  \| u - \bu \|_\ell \big)^{2} \\
&\leq C \big( \| \rho - \brho \|_\ell^2 + \| u - \bu \|_\ell^2 \big),
\end{aligned}
\]
for some uniform $C > 0$ depending only on $a_0$ and $\bU$. This yields, in turn,
\begin{equation}
\label{estW2l}
\| W_2 \|_\ell^2 \leq C \big( \| \rho - \brho \|_{\ell}^2 + \| u - \bu \|_\ell^2 \big), \qquad 0 \leq \ell \leq s.
\end{equation}
Upon inspection, we notice that the third component of $\tiW$ is of the form
\[
\tiW_3 = \bc_1 (\rho - \brho) + \bc_2 (\rho u - \brho \, \bu) + \bc_3 \big( \rho ( e + \tfrac{1}{2}u^2 ) - \brho( \be + \tfrac{1}{2} \bu^2) \big),
\]
for some constants $\bc_j$ depending on $\bU$. From the estimates above we already know that $\rho - \brho \in H^{s+1}(\R)$ and $\rho u - \brho \, \bu \in H^s(\R)$ and we only need to examine the third term. By a Taylor expansion of the standard total energy, $\rho \big( e(\rho,\theta) + \tfrac{1}{2}u^{2} \big)\in C^{\infty}(\overline{\cU})$, we have
\[
\begin{aligned}
\rho \big( e + \tfrac{1}{2}u^{2} \big) -\brho \big( \be + \tfrac{1}{2}\bu^{2} \big) &=  \big( \be+ \tfrac{1}{2}\bu^{2} \big)(\rho -\brho) + \brho \: \bu (u- \bu) + \brho \, \be_{\theta}(\theta- \bthe) + \\ 
& \quad+ O ( |\rho - \brho|^2 + |u - \bu|^2+ |\theta - \bthe|^2).
\end{aligned}
\]
Whence, from the Banach algebra properties of Lemma \ref{lemaux1} we get
\[
\begin{aligned}
\| \rho \big( e + \tfrac{1}{2}u^{2} \big) -\brho \big( \be + \tfrac{1}{2}\bu^{2} \big) \|^2_\ell & \leq C \big( \| \rho - \brho \|_\ell^2 + \| u - \bu \|_\ell^2 + \| \theta - \bthe \|_\ell^2 +   \\ & \quad +  \| |\rho - \brho|^2 + |u - \bu|^2+ |\theta - \bthe|^2  \|_\ell^2 \big) \\ & \leq C \big( \| \rho - \brho \|_\ell^2 + \| u - \bu \|_\ell^2 + \| \theta - \bthe \|_\ell^2 +  \\ & \quad +\| \rho - \brho \|_\infty^2 \| \rho - \brho \|_\ell^2 + \| u - \bu \|_\infty^2 \| u - \bu \|_\ell^2 + \| \theta - \bthe \|_\infty^2 \| \theta - \bthe \|_\ell^2   \big) \\ & \leq C \big( \| \rho - \brho \|_\ell^2 + \| u - \bu \|_\ell^2 + \| \theta - \bthe \|_\ell^2 \big) 
\end{aligned}
\]
for some $C > 0$ depending only on $a_0$ and $\bU$ and all $0 \leq \ell \leq s$. This shows that $\tiW_3 \in H^s(\R)$ and 
\begin{equation}
\label{estW3l}
\| \tiW_3 \|_\ell^2 \leq C \big( \| \rho - \brho \|_{\ell}^2 + \| u - \bu \|_\ell^2 + \| \theta - \bthe \|_\ell^2\big),
\end{equation}
uniformly for all $0 \leq \ell \leq s$ and all times $t \in [0,T]$.

Let us now examine the non-standard part of $W$, namely, the term 
\[
 D_U f^0(\bU)^{-1} \Gamma^0(U,U_x) = D_U f^0(\bU)^{-1} \begin{pmatrix} 0 \\ 0 \\ \rho(\kappa - \theta \kappa_\theta) \rho_x^2
\end{pmatrix}.
\]
Rewrite $\rho(\kappa - \theta \kappa_\theta) \rho_x^2= \big( \rho(\kappa- \theta \kappa_{\theta})- \brho(\bkp -\bthe \bkp_{\theta}) \big)\rho_{x}^{2}+ \brho(\bkp -\bthe \bkp_{\theta})\rho_{x}^{2} $. Then by making a Taylor expansion of the function $\rho( \kappa- \theta \kappa_{\theta})\in C^{\infty}(\overline{\cD})$, as we did for the standard total energy, one can easily show that $\big( \rho(\kappa- \theta \kappa_{\theta})- \brho(\bkp -\bthe \bkp_{\theta}) \big) \in H^{\ell}(\R)$, for $0 \leq \ell \leq s$, and that there holds
\[
\|\rho(\kappa- \theta \kappa_{\theta})- \brho(\bkp -\bthe \bkp_{\theta})  \|_{\ell}^{2} \leq C \big( \| \rho -\brho \|_{\ell}^{2} + \| \theta- \bthe \|_{\ell}^{2} \big),
\]
for some uniform constant $C>0$. Furthermore, as $\kappa \in C^\infty(\overline{\cD})$, with $\tfrac{1}{2} m_1 \leq \rho \leq 2 M_1$, $\tfrac{1}{2}m_2 \leq \theta \leq 2M_2$ a.e. in $x \in \R$, $t \in [0,T]$ and recalling Remark \ref{remLinftybound}, we obtain $\| \kappa - \theta \kappa_\theta\|_\infty, \| \rho \|_\infty, \| \rho_x \|_\infty \leq C$ for some $C > 0$ depending only on $a_0$, $\bU$, $m_j$ and $M_j$. Hence, by Lemma \ref{lemaux1} (b), we obtain
\[
\| \big( \rho(\kappa- \theta \kappa_{\theta})- \brho(\bkp -\bthe \bkp_{\theta}) \big)\rho_{x} \|_{\ell}^{2} \leq C \big( \| \rho -\brho \|_{\ell+1}^{2} + \| \theta- \bthe \|_{\ell}^{2} \big)
\]
Combine last estimate with Lemma \ref{lemaux1} (b) to obtain
\[
\| \big( \rho(\kappa- \theta \kappa_{\theta})- \brho(\bkp -\bthe \bkp_{\theta}) \big)\rho_{x}^{2} \|_{\ell}^{2} \leq C \big( \| \rho -\brho \|_{\ell+1}^{2} + \| \theta- \bthe \|_{\ell}^{2} \big).
\]
In addition, one can easily see, again by Lemma \ref{lemaux1}, that there holds
\[
\| \brho(\bkp -\bthe \bkp_{\theta}) \big)\rho_{x}^{2} \|_{\ell}^{2} \leq C \| \rho - \brho \|_{\ell+1}^{2},
\]
for $0 \leq \ell \leq s$, and some uniform constant $C>0$.Therefore we have
\begin{equation}
\label{estnsl}
\| \rho (\kappa - \theta \kappa_\theta) \rho_x^2 \|_\ell^2 \leq C \big( \| \rho - \brho \|_{\ell+1}^2 + \| \theta - \bthe \|_{\ell}^2  \big),
\end{equation}
with uniform $C > 0$. This shows that $\rho (\kappa - \theta \kappa_\theta) \rho_x^2 \in H^s(\R)$ and we conclude that $W_3 \in H^s(\R)$.  Moreover, gathering estimates \eqref{estW1l}, \eqref{estW2l}, \eqref{estW3l} and \eqref{estnsl} we deduce that there exists a uniform constant $\widetilde{C}_0 > 0$, depending only on $a_0$, $\bU$, $m_j$ and $M_j$, such that
\begin{equation}
\label{rightest}
\| W_1 \|_{\ell+1}^2 + \| W_2 \|_{\ell}^2 + \| W_3  \|_{\ell}^2 \leq \widetilde{C}_0 \big( \| \rho - \brho \|_{\ell+1}^2 + \| u - \bu \|_\ell^2 + \| \theta - \bthe \|_\ell^2 \big),
\end{equation}
for each $0 \leq \ell \leq s$ and all $t \in [0,T]$.

Now, let us define the mapping $Y = \Lambda(U)$, $\Lambda : \R^3 \to \R^3$, $\Lambda(U) := f^0(U) - f^0(\bU)$. Clearly, $\Lambda$ is smooth, $\Lambda(\bU) = 0$ and $D_U \Lambda(\bU) = D_U f^0(\bU)$ is non-singular. By the inverse function theorem there exist neighborhoods,
$B_1 := B_{\zeta_1}(\bU) = \{ U \in \R^3 \, : \, |U - \bU| < \zeta_1 \}$ and $B_2 := B_{\zeta_2}(0) = \{ Y \in \R^3 \, : \, |Y| < \zeta_2 \}$, for some radii $\zeta_i > 0$, such that $\Lambda(B_1) \subset B_2$ and $\Lambda : B_1 \to B_2$ is bijective. Moreover, the inverse mapping, $U = \Lambda^{-1}(Y)$, $\Lambda^{-1} : \Lambda(B_{1}) \to B_1$, is also smooth and satisfies $D_Y \Lambda^{-1} (\Lambda(\bU)) = D_Y \Lambda^{-1}(0) = D_U f^0(\bU)^{-1}$. Since $f^0$ and $\Lambda^{-1}$ are smooth, they are locally Lipschitz and there exists a uniform constant $C > 0$ (depending only on $a_0$ and $\bU$) such that
\[
\begin{aligned}
| f^0(U) - f^0(\bU) | &\leq C | U - \bU |, \\
| \Lambda^{-1}(Y) - \Lambda^{-1}(0) | = | U - \bU | &\leq C |Y| = C | f^0(U) - f^0(\bU)|,
\end{aligned}
\]
for all $U \in B_1$ (equivalently, for all $Y \in  \Lambda(B_1)$). Hence, we arrive at
\begin{equation}
\label{zeroth}
C^{-1} |U - \bU| \leq | f^0(U) - f^0(\bU)| \leq C |U - \bU|.
\end{equation}
As a first consequence of \eqref{zeroth} we have
\begin{align}
\| U - \bU \|_0^2 \leq C \| f^0(U) - f^0(\bU) \|_0^2 &= C \| D_U f^0(\bU) W - \Gamma^0(U,U_x) \|_0^2 \nonumber\\
&\leq C \| W \|_0^2 + C \| \rho(\kappa - \theta \kappa_\theta) \rho_x \|_\infty^2 \| \rho_x \|_0^2 \nonumber\\
&\leq C \big( \| W_1 \|_1^2 + \| W_2 \|_0^2 + \| W_3 \|_0^2\big). \label{firstleft}
\end{align}

Now, observe that we have already proved that $\tiW = D_U f^0(\bU)^{-1} (f^0(U) - f^0(\bU)) \in H^{s+1}(\R) \times H^s(\R) \times H^s(\R)$. Henceforth, $Y = \Lambda(U) \in H^{s+1}(\R) \times H^s(\R) \times H^s(\R)$ as well. Moreover, from \eqref{zeroth} and Remark \ref{remLinftybound} we also deduce that $Y = \Lambda(U) \in L^\infty(\R)^3$. Therefore, being $\Lambda^{-1}$ smooth, from Lemma \ref{lemaux2} we obtain
\[
\| \partial_x \Lambda^{-1}(Y) \|_{j-1} \leq C M (1 + \|Y\|_\infty)^{j-1} \| \partial_x Y \|_{j-1},
\]
for all $1 \leq j \leq s$, which is equivalent to
\begin{equation}
\label{secondleft}
\| \partial_x (U - \bU) \|_{j-1} \leq \widetilde{C} \| \partial_x (f^0(U) - f^0(\bU)) \|_{j-1} \leq C \| \partial_x \tiW \|_{j-1},
\end{equation}
for $1 \leq j \leq s$ and some uniform $C > 0$. Combining estimates \eqref{estnsl}, \eqref{firstleft} and \eqref{secondleft}, and recalling that $W_1 = \tiW_1 = \rho - \brho \in H^{s+1}(\R)$, we obtain the estimate
\[
\begin{aligned}
\| \rho - \brho \|_{\ell +1}^2 + \| u - \bu \|_{\ell}^2 + \| \theta - \bthe\|_{\ell}^2 & \leq C \big( \| \tiW_1 \|_{\ell+1}^2 + \| \tiW_2 \|_\ell^2 + \| \tiW_3 \|_\ell^2 \big)\\
&\leq  C \big( \| W_1 \|_{\ell+1}^2 + \| W_2 \|_\ell^2 + \| W_3 \|_\ell^2 + \| \rho(\kappa - \theta \kappa_\theta) \rho_x^2 \|_\ell^2 \big)\\
&\leq  C \big( \| W_1 \|_{\ell+1}^2 + \| W_2 \|_\ell^2 + \| W_3 \|_\ell^2 + \| \rho- \brho \|_{\ell+1}^2+\| \theta- \bthe \|_{\ell}^2  \big)\\
&\leq  C \big( \| W_1 \|_{\ell+1}^2 + \| W_2 \|_\ell^2 + \| W_3 \|_\ell^2 \big).
\end{aligned}
\]
We conclude that there exists a uniform $C_0 > 0$ such that
\begin{equation}
\label{equivl}
C_0^{-1} \vertiii{U(t) - \bU}_\ell \leq \vertiii{W(t)}_\ell \leq C_0 \vertiii{U(t) - \bU}_\ell,
\end{equation}
for all $0 \leq \ell \leq s$ and all $0 \leq t \leq T$. Taking $\ell = s$ we obtain estimate \eqref{equivestUW}. This implies, in particular, that $W \in X_s\big((0,T); \tfrac{1}{2}\bm_1,2\bM_1,\tfrac{1}{2}\bm_2,2\bM_2\big)$ for the same $T > 0$, with constants $\bm_j$, $\bM_j$, that depend on $C_0 >0$, as well as on,  $a_0$, $\bU$, $m_j$, $M_j$. This shows (a).

Finally, let us further assume that $U(0) - \bU \in L^1(\R)^3$. Since, 
\[
|W(0)| = \big| D_U f^0(\bU)^{-1} \big( f^0(U(0)) - f^0(\bU) + \Gamma^0(U(0), \partial_x U(0)) \big) \big|,
\]
and the non-standard part on the right hand side is proportional to $\rho_0 (\kappa(\rho_0, \theta_0) - \theta_0 \kappa_\theta( \rho_0, \theta_0))(\partial_x \rho_0)^2$, use \eqref{zeroth} to obtain
\[
\begin{aligned}
\| W(0) \|_{L^1} &\leq C \big( \| U(0) - \bU \|_{L^1} + \| \rho_0 (\kappa(\rho_{0},\theta_{0})-\theta_{0}\kappa_{\theta}(\rho_{0},\theta_{0})) (\partial_x \rho_0)^2 \|_{L^1} \big)\\
&\leq C \big( \| U(0) - \bU \|_{L^1} + \|  \rho_0 (\kappa(\rho_{0},\theta_{0})-\theta_{0}\kappa_{\theta}(\rho_{0},\theta_{0})) \|_\infty \| (\partial_x \rho_0)^{2}\|_{L^{1}} \big)\\
& \leq C \big( \| U(0) - \bU \|_{L^1} +  \| \partial_{x}\rho_{0} \|_{0}^{2} \big) \\
&\leq C \big( \| U(0) - \bU \|_{L^1} + \| \rho_0 - \brho \|_1^2 \big).
\end{aligned}
\]
This completes the proof of (b). The lemma is proved.
\end{proof}

\begin{remark}
\label{remequivUW}
Estimate \eqref{equivestUW} essentially means that the new perturbation variables $W$ are equivalent to $U - \bU$ in the appropriate function space. For later use, more precisely, for obtaining estimates at a nonlinear level, we will need to assume that initial perturbations have finite mass as well (i.e., in $L^1$). Thus, estimate \eqref{L1estUW} shows the same for the initial condition in the $W$ variables. Notice the extra term depending on the Sobolev norm of the density, which comes from the non-standard part of the conserved quantities.
\end{remark}

\section{Linear decay}
\label{seclindecay}

This section is devoted to obtain the decay of the solutions to the linearized NSFK system around a constant equilibrium state $\bU = \big( \brho, \bu, \bthe \big) \in \cU$. For that purpose, we examine its dissipative structure within the framework of Humpherys' work \cite{Hu05}. This structure will be studied through its equivalence to the genuine coupling condition and to the existence of an appropriate compensating matrix symbol. 

\subsection{Linearization and symmetrizability of the symbol}

Let us consider the linear part of equation \eqref{Wsystem}, namely,
\begin{equation}
\label{27}
A^{0}(\bU)W_{t} + A^{1}(\bU)W_{x} - B(\bU)W_{xx}- C(\bU)W_{xxx}= 0,
\end{equation}
which is a linear, constant coefficient system for $W$. The coefficient matrices are given by \eqref{barA0A1B} and \eqref{barC}. Following Humpherys \cite{Hu05}, we define the linear operator $\cL$ and write system \eqref{27} as
\begin{equation}
\label{29}
A^{0}(\bU)W_{t} =  \cL W := -D_{1}W_{x} - D_{2}W_{xx} - D_{3}W_{xxx}, 
\end{equation}
with $D_{1}= A^{1}(\bU)$, $D_{3}= -B(\bU)$, $D_{3}= -C(\bU)$. Then apply the Laplace-Fourier transform to obtain the eigenvalue problem
\[
\left( \lambda A^{0}(\bU) + i \xi D_{1} + (i\xi)^2 D_{2} + (i \xi)^3 D_{3} \right) \hW = 0,
\]
with $\lambda \in \C$ and $\xi \in \R$ denoting the frequency and the Fourier wave numbers, respectively. To simplify the notation, let us write $A^0 \equiv A^0(\bU)$, a constant matrix. Humpherys \cite{Hu05} then splits the symbol into odd and even terms. Define the symbols,
\[
A(\xi) := \sum_{j \, \text{odd}} (i\xi)^{j-1} D_j, \qquad B(\xi) := \sum_{j \, \text{even}} (-1)^{j/2} \xi^j D_j,
\]
so that the eigenvalue problem in Fourier space is reduced to
\begin{equation}
\label{30}
\left( \lambda A^{0} + i \xi A(\xi) + B(\xi) \right) \hW = 0,
\end{equation}
where
\begin{equation}
\label{31}
\begin{aligned}
A(\xi) = D_{1}- \xi^{2}D_{3} &= A^{1}(\bU) + \xi^{2} C(\bU) \\ &=
\begin{pmatrix}
\bp_{\rho} \bu / \bthe \brho & \bp_{\rho}/ \bthe & 0 \\ \bp_{\rho}/ \bthe+ \xi^{2} \bk \brho / \bthe & \bu \:\brho/ \bthe & \bp_{\theta} / \bthe \\ 0 & \bp_{\theta}/ \bthe & \brho \: \bu \, \be_{\theta}  / \bthe^{2}
\end{pmatrix} \\ & = \frac{1}{\bthe}\begin{pmatrix}
\bp_{\rho} \bu/ \brho  & \bp_{\rho}  & 0 \\ \beta(\xi) & \brho \, \bu & \bp_{\theta}  \\ 0 & \bp_{\theta}& \brho \:  \bu \, \be_{\theta}  / \bthe
\end{pmatrix},
\end{aligned}
\end{equation}
\[
B(\xi) = \xi^2 B(\bU) =  \frac{\xi^2}{\bthe}  \begin{pmatrix}
0 & 0 & 0 \\ 0 & \bmu & 0 \\ 0 & 0 & \balp/ \bthe
\end{pmatrix} =: \xi^2 B \geq 0,
\]
clearly $B$ is a constant matrix, and where we have defined
\begin{equation}
\label{defbeta}
\beta(\xi) := \bp_{\rho} + \xi^2 \bk \brho > 0, \qquad \xi \in \R.
\end{equation}
Observe that, although $A^{0}$ and $B$ are symmetric, $A(\xi)$ is not. Then the triplet $(A^{0},A(\xi),B(\xi))$, associated to system \eqref{27} in the Fourier space after splitting the symbol in even and odd terms, is not in symmetric form. However it can be symmetrized in the sense described next.

Let us remember that a generic linear system of the form \eqref{29} is said to be symmetrizable in the classical sense of Friedrichs \cite{Frd54} if, for any constant state $\bU$, there exists a symmetric, positive definite matrix $S=S(\bU) > 0$ such that $SA^{0}$, $SD_{j}$, $j=1,2,3$, are symmetric with $SA^{0}$ positive definite. In that context, Humpherys \cite{Hu05} (see also \cite{PlV22}) realized that, even though the linearized isothermal Korteweg system is not Friedrichs symmetrizable, it can be symmetrized in the following sense.
\begin{definition}[symbol symmetrizability \cite{Hu05}]
\label{defsymH}
The operator $\cL$ is \emph{symbol symme\-trizable}, or \emph{symmetrizable in the sense of Humpherys}, if there exists a smooth, symmetric matrix-valued function, $S=S(\xi)>0$, positive-definitive, such that $S(\xi)A^{0}$, $S(\xi)A(\xi)$ and $S(\xi)B(\xi)$ are symmetric, with  $S(\xi)A^{0}>0$ and $S(\xi)B(\xi)\geq 0$ (positive semi-definite) for all $\xi \in \R$.
\end{definition}  
\begin{lemma}
\label{Sym2Full}
The linear (non-isothermal) Korteweg system \eqref{27} is symbol symmetrizable but not symmetrizable in the sense of Friedrichs. One symbol symmetrizer is of the form
\begin{equation}
\label{32}
S(\xi) = \begin{pmatrix}
\beta(\xi)/\bp_{\rho} & 0 & 0 \\ 0 & 1 & 0 \\ 0 & 0 & 1 
\end{pmatrix} \in C^{\infty}\big( \R; \R^{3\times 3}  \big).
\end{equation} 
\end{lemma}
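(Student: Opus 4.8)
The plan is to verify directly that the explicit matrix $S(\xi)$ in \eqref{32} satisfies every requirement of Definition \ref{defsymH}, and then to rule out classical (Friedrichs) symmetrizability by an obstruction argument centered on the capillarity matrix $C(\bU)$.

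First I would note that $S(\xi)$ is diagonal with entries $\beta(\xi)/\bp_{\rho}$, $1$, $1$; by \eqref{defbeta} and hypothesis \hyperref[H3]{\rm{(H$_3$)}} one has $\beta(\xi) = \bp_{\rho} + \xi^{2}\bk\brho > 0$ and $\bp_{\rho} > 0$, so $S(\xi)$ is $C^{\infty}$ in $\xi$, symmetric and positive definite. Since $A^{0} = A^{0}(\bU)$ and $B = B(\bU)$ are diagonal (see \eqref{barA0A1B}), the products $S(\xi)A^{0}$ and $S(\xi)B(\xi) = \xi^{2}S(\xi)B$ are again diagonal, hence symmetric, with strictly positive, resp. nonnegative, diagonal entries by \hyperref[H2]{\rm{(H$_2$)}}--\hyperref[H3]{\rm{(H$_3$)}}; thus $S(\xi)A^{0} > 0$ and $S(\xi)B(\xi) \geq 0$. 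The only computation with content is the symmetry of $S(\xi)A(\xi)$: using the explicit entries of $A(\xi)$ in \eqref{31}, a one-line multiplication gives
\[
S(\xi)A(\xi) = \frac{1}{\bthe}\begin{pmatrix} \beta(\xi)\bu/\brho & \beta(\xi) & 0 \\ \beta(\xi) & \brho\,\bu & \bp_{\theta} \\ 0 & \bp_{\theta} & \brho\,\bu\,\be_{\theta}/\bthe \end{pmatrix},
\]
which is symmetric because left-multiplication by $\beta(\xi)/\bp_{\rho}$ promotes the $(1,2)$ entry $\bp_{\rho}/\bthe$ of $A^{1}(\bU)$ to $\beta(\xi)/\bthe$, precisely matching the $(2,1)$ entry of $A(\xi)$, which already carries the capillarity contribution $\xi^{2}\bk\brho/\bthe$. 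This is the step where the dependence of the symmetrizer on $\xi$ is unavoidable, and to my mind it is the real content of the lemma even though the algebra is short.

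For the negative statement I would argue by contradiction. Suppose a constant symmetric positive definite $S = (s_{ij})$ makes $SA^{0}$, $SD_{1}$, $SD_{2}$, $SD_{3}$ all symmetric. Since $D_{3} = -C(\bU)$ and $C(\bU)$ (see \eqref{barC}) has its only nonzero entry $\bk\brho/\bthe$ in position $(2,1)$, the product $SC(\bU)$ equals $\tfrac{\bk\brho}{\bthe}$ times the matrix whose first column is the second column $(s_{12},s_{22},s_{32})^{\top}$ of $S$ and whose remaining columns vanish. Symmetry of $SC(\bU)$ forces its $(1,2)$ and $(2,1)$ entries to coincide, i.e. $0 = \tfrac{\bk\brho}{\bthe}\,s_{22}$; since $\bk,\brho,\bthe > 0$ this gives $s_{22} = 0$, contradicting positive definiteness of $S$. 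Hence no Friedrichs symmetrizer exists for any constant equilibrium state, and one sees that the sole obstruction is the non-symmetric capillarity tensor: $A^{0}(\bU)$, $A^{1}(\bU)$ and $B(\bU)$ are already symmetric, so $S = I$ symmetrizes every term of \eqref{27} except $C(\bU)$.
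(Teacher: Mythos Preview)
Your proof is correct and follows essentially the same approach as the paper: direct verification that the explicit diagonal $S(\xi)$ symmetrizes $A^{0}$, $A(\xi)$, $B(\xi)$ with the required positivity, followed by an obstruction argument for Friedrichs symmetrizability. Your negative argument is in fact slightly more economical than the paper's sketch, which invokes the simultaneous symmetry of $SD_{2}(\bU)$ and $SD_{3}(\bU)$; you correctly observe that symmetry of $SC(\bU)$ alone already forces $s_{22}=0$, which suffices.
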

\begin{proof}
Clearly, $S(\xi)$ is smooth, symmetric and positive definite. From direct computations one easily obtains
\[
S(\xi)A^{0} =  \frac{1}{\bthe} \begin{pmatrix}
\beta(\xi)/ \brho  & 0 & 0 \\ 0 & \brho  & 0 \\ 0 & 0 &  \be_{\theta} \brho / \bthe
\end{pmatrix}, \quad 
S(\xi)A(\xi)= \frac{1}{\bthe}\begin{pmatrix}
\beta(\xi)\bu/ \brho  & \beta(\xi)  & 0 \\ \beta(\xi) & \brho \, \bu & \bp_{\theta}  \\ 0 & \bp_{\theta}& \brho\: \bu  \, \be_{\theta}  / \bthe
\end{pmatrix},
\]
and $S(\xi)B(\xi)= B(\xi)$, which are symmetric matrices with $S(\xi)A^{0}>0$ and $S(\xi)B(\xi) \geq 0$. This shows that the operator $\cL$ is symbol symmetrizable. To verify that the system is not Friedrichs symmetrizable, suppose there exists a symmetrizer of the form 
\[
S = \begin{pmatrix}
s_{11} & s_{12} & s_{13} \\ s_{12} & s_{22} & s_{23} \\ s_{13} & s_{23} & s_{33}
\end{pmatrix}.
\]
Then the condition on $SD_{2}(\bU)$ and $SD_{3}(\bU)$ to be simultaneously symmetric matrices implies that $S$ cannot be positive definite, as the reader may easily verify. The lemma is proved.
\end{proof}
\begin{remark}
Up to our knowledge, the non-isothermal Korteweg system under consideration in this paper is the second example (only after the isothermal Korteweg model \cite{Hu05,PlV22}) of a physically relevant system which is not Friedrichs symmetrizable but symmetrizable in the sense of Humpherys. By virtue of the importance of symmetrization of differential operators in relation to the existence of convex entropies, it is difficult to underestimate the relevance of Definition \ref{defsymH}.
\end{remark}

\subsection{Genuine coupling and the compensating matrix symbol}

Next, we show that the linear system \eqref{27} satisfies the genuine coupling condition and exhibit an appropriate compensating matrix symbol for it. First, we need to make precise some definitions.

\begin{definition}[compensating matrix symbol \cite{Hu05}]
\label{ComMat}
Let $A^{0}$, $A$, $B \in C^{\infty} \left( \R; \R^{3 \times 3} \right)$ be smooth, real matrix-valued functions of the variable $\xi \in \R$. Assume that $A^{0},$ $A$, $B$ are symmetric for all $\xi \in \R$, $A^{0} >0$ is positive definite and $B \geq 0$ is positive semi-definite. If a smooth, real matrix valued function,  $K\in C^{\infty} \left( \R; \R^{3 \times 3} \right)$, satisfies
\begin{itemize}
\item[(a)] $K(\xi)A^{0}(\xi)$ is skew-symmetric for all $\xi\in \R$; and,
\item[(b)] $\left[K(\xi)A(\xi)\right]^{s}+ B(\xi) \geq \gamma(\xi) I > 0$ for all $\xi \in \R$, $\xi \neq 0$, and some $\gamma = \gamma(\xi) > 0$,
\end{itemize}
then $K$ is said to be a \emph{compensating matrix symbol} for the triplet $(A^{0}, A, B)$. Here $[M]^{s} := \frac{1}{2}(M+M^\top)$ denotes the symmetric part of any real matrix $M$. 
\end{definition}

\begin{definition}[strict dissipativity and genuine coupling \cite{Hu05,ShKa85}]
\label{defstrictdiss}
Consider the linear system \eqref{29} and its associated eigenvalue problem \eqref{30} in the Fourier space. Then we say that:
\begin{itemize}
\item[(i)] The operator $\mathcal{L}$ is called \emph{strictly dissipative} if for each $\xi \neq 0$ then all solutions to the spectral problem \eqref{30} satisfy $\Re  \lambda (\xi) < 0$.
\item[(ii)] The triplet $(A^0, A, B)(\xi)$ is said to be genuinely coupled provided that, for all $\xi \neq 0$, every vector $V \in \ker B(\xi)$, with $V \neq 0$, satisfies the condition $\big( \varrho A^{0}(\xi)  +    A(\xi) \big) V \neq 0$ for any $\varrho \in \R$. In that case we say that the operator $\cL$ satisfies the \emph{genuine coupling condition}. 
\end{itemize}
\end{definition}

\begin{remark}
\label{remsdgc}
Strict dissipativity is tantamount to the stability of the essential spectrum of the constant coefficient differential operator $\cL$ when computed, for instance, with respect to the space $L^2(\R)$ of finite energy perturbations. Genuine coupling underlies a simple algebraic property of the operator $\cL$, inasmuch as no eigenvector of the hyperbolic/dispersive term, $A(\xi)$, lies in the kernel of the dissipative symbol, $B(\xi)$.
\end{remark}

Humpherys then proved the following equivalence theorem (see Theorems 3.3 and 6.3 in \cite{Hu05}).
\begin{theorem}[equivalence theorem \cite{Hu05}]
\label{HuThSym}
Suppose that a symbol symmetrizer, $S= S(\xi)$, $S \in C^\infty(\R; \R^{3 \times 3})$, exists for the operator $\mathcal{L}$ in the sense of Definition \ref{defsymH}, and that $S(\xi)A(\xi)$ is of constant multiplicity in $\xi$, that is, all its eigenvalues are semi-simple and with constant multiplicity for all $\xi \in \R$. Then the following conditions are equivalent:
\begin{itemize}
\item[(a)] $\mathcal{L}$ is strictly dissipative.
\item[(b)] $\mathcal{L}$ is genuinely coupled.
\item[(c)] There exists a compensating matrix function for the triplet $(SA^{0},SA, SB)$.
\end{itemize}
\end{theorem}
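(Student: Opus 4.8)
This is Humpherys' equivalence theorem, and the proof in \cite{Hu05} (which I would follow) rests on first reducing the non-symmetric triplet $(A^{0},A,B)(\xi)$ to a symmetric one and then running a short cycle of symbol computations, the only substantial step being the explicit construction of the compensating matrix. Since the symbol symmetrizer $S(\xi)$ is smooth, symmetric and invertible, left multiplication of the spectral problem \eqref{30} by $S(\xi)$ alters neither the eigenvalues $\lambda(\xi)$ nor the kernels (indeed $\ker B(\xi)=\ker(S(\xi)B(\xi))$); hence strict dissipativity, genuine coupling, and the existence of a compensating matrix are the same for $(A^{0},A,B)$ and for the symmetric triplet $(\widehat A^{0},\widehat A,\widehat B):=(SA^{0},SA,SB)$, with $\widehat A^{0}>0$, $\widehat A=\widehat A^{\top}$ and $0\le\widehat B=\widehat B^{\top}$. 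I would work from here on with this symmetric triplet.

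Three of the four implications are then elementary. Taking the inner product of \eqref{30} (multiplied by $S$) with $\hW$ and splitting into real and imaginary parts, one gets $\Re\lambda\,\langle\widehat A^{0}\hW,\hW\rangle+\langle\widehat B\hW,\hW\rangle=0$, so $\Re\lambda\le 0$ always, with equality forcing $\hW\in\ker\widehat B$ and then, from the imaginary part, $\big(\varrho\widehat A^{0}+\widehat A(\xi)\big)\hW=0$ for a suitable $\varrho\in\R$. This yields at once (b)$\Rightarrow$(a): genuine coupling rules out such a $\hW$, so $\Re\lambda<0$. It also yields (a)$\Rightarrow$(b) by contraposition: if genuine coupling fails at some $\xi_{0}\neq 0$, with witness $\hW\in\ker\widehat B(\xi_{0})$ and $\varrho\in\R$, then $\lambda=i\xi_{0}\varrho$ together with $\hW$ solves \eqref{30}, so $\Re\lambda=0$. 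Finally (c)$\Rightarrow$(b): if a compensating matrix $K(\xi)$ exists and genuine coupling fails at $\xi_{0}$ with the same witness, then $\langle([K\widehat A]^{s}+\widehat B)\hW,\hW\rangle=\langle[K\widehat A]^{s}\hW,\hW\rangle=-\varrho\,\Re\langle K\widehat A^{0}\hW,\hW\rangle=0$, since $K\widehat A^{0}$ is skew-symmetric and $\widehat B(\xi_{0})\hW=0$, contradicting $[K\widehat A]^{s}+\widehat B\ge\gamma(\xi_{0})I>0$.

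The heart of the theorem is (b)$\Rightarrow$(c), the construction of $K$. After normalizing $\widehat A^{0}=I$ by a congruence, the constant-multiplicity hypothesis gives a smooth spectral resolution $\widehat A(\xi)=\sum_{j=1}^{r}\mu_{j}(\xi)P_{j}(\xi)$, with smooth orthogonal projections $P_{j}(\xi)$ and distinct real eigenvalues $\mu_{1}(\xi)<\cdots<\mu_{r}(\xi)$. Put
\[
K(\xi):=\sum_{j<k}\frac{2}{\mu_{j}(\xi)-\mu_{k}(\xi)}\Big(P_{j}(\xi)\widehat B(\xi)P_{k}(\xi)-P_{k}(\xi)\widehat B(\xi)P_{j}(\xi)\Big),
\]
which is smooth and skew-symmetric, so $K\widehat A^{0}=K$ is skew-symmetric. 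Using $P_{j}\widehat A=\mu_{j}P_{j}$ and $\widehat B=\sum_{j,k}P_{j}\widehat B P_{k}$, a direct computation collapses the cross terms and gives the clean identity $[K\widehat A]^{s}+\widehat B=\sum_{j}P_{j}\widehat B P_{j}$. Genuine coupling says precisely that $\mathrm{range}\,P_{j}(\xi)\cap\ker\widehat B(\xi)=\{0\}$ for each $j$ (every nonzero vector of a single eigenspace of $\widehat A$ is an eigenvector), hence each diagonal block $P_{j}\widehat B P_{j}$ is positive definite on $\mathrm{range}\,P_{j}$; since these ranges span $\C^{3}$ we obtain $\sum_{j}P_{j}\widehat B P_{j}\ge\gamma(\xi)I>0$ for $\xi\neq 0$. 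Undoing the congruence produces a compensating matrix symbol for $(SA^{0},SA,SB)$, and together with the previous paragraph this closes up to (a)$\Leftrightarrow$(b)$\Leftrightarrow$(c).

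I expect the main obstacle to be the analytic bookkeeping of this construction rather than any conceptual point: one must keep the denominators $\mu_{j}(\xi)-\mu_{k}(\xi)$ bounded away from zero and the projections $P_{j}(\xi)$ genuinely smooth in $\xi$, which is exactly where semisimplicity and constant multiplicity enter, and one must carry out the normalization $\widehat A^{0}=I$ so that transporting $K$ back to the original coordinates still makes $K(\xi)A^{0}$ skew-symmetric. For the NSFK application these points are mild: by Lemma \ref{Sym2Full} the symmetrized symbol $\widehat A(\xi)=S(\xi)A(\xi)$ is a real symmetric $3\times 3$ matrix, and for every $\xi\in\R$ its three eigenvalues are real and simple, so $r=3$, all projections are rank one, and the spectral resolution is globally smooth.
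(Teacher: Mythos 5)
The paper does not itself prove Theorem~\ref{HuThSym}: it is quoted verbatim from Humpherys (Theorems~3.3 and~6.3 of \cite{Hu05}), and Remark~\ref{remtypediss} notes only that Humpherys constructs the compensating symbol ``based on the Drazin's inverse of the commutator operator.'' Your proof is correct and takes the classical Kawashima--Shizuta route instead: reduce to the symmetric triplet via $S(\xi)$, normalize $\widehat A^{0}=I$, use constant multiplicity to get a smooth spectral resolution $\widehat A=\sum_{j}\mu_{j}P_{j}$, and set
\[
K=\sum_{j<k}\frac{2}{\mu_{j}-\mu_{k}}\big(P_{j}\widehat B P_{k}-P_{k}\widehat B P_{j}\big).
\]
Your identity $[K\widehat A]^{s}+\widehat B=\sum_{j}P_{j}\widehat B P_{j}$ checks out, as does the positivity argument: if $\langle\sum_{j}P_{j}\widehat B P_{j}V,V\rangle=0$ then each $P_{j}V\in\ker\widehat B$, which genuine coupling forces to vanish, so $V=0$. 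This is in fact algebraically the same construction as Humpherys': the Drazin inverse of $\mathrm{ad}_{\widehat A}$ acts on each off-diagonal block $P_{j}XP_{k}$ as division by $\mu_{j}-\mu_{k}$ and annihilates the diagonal blocks, so the two formulas coincide (up to a normalizing factor); yours is more explicit but requires the smooth spectral resolution up front, whereas the Drazin-inverse form is coordinate-free and yields smoothness in $\xi$ directly. The three short implications (a)$\Leftrightarrow$(b) and (c)$\Rightarrow$(b) are also correct. Two small points of wording: ``undo the congruence'' is a similarity on $K$, namely $K=(\widehat A^{0})^{1/2}\widetilde K(\widehat A^{0})^{-1/2}$, under which $K\widehat A^{0}$ is skew and $[K\widehat A]^{s}+\widehat B$ is congruent to $[\widetilde K\widetilde A]^{s}+\widetilde B$; and in the (b)$\Rightarrow$(a) step the clause ``from the imaginary part'' should be read as ``from the remaining eigenvalue equation once $\widehat B\hW=0$, taking $\varrho=\Im\lambda/\xi$.'' Neither affects the validity of the argument.
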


\begin{remark}
\label{remeqth}
Theorem \ref{HuThSym} constitutes the extension to higher order one-dimensio\-nal systems, of the classical equivalence theorem for second order systems, generalizing in this fashion the work by Ellis and Pinsky \cite{EllPin75a,EllPin75b} and by Kawashima and Shizuta \cite{ShKa85,KaSh88a}. We remind the reader that for the ``inviscid"  system (that is, without dissipation, dispersion or relaxation effects) to be hyperbolic, the matrix $A^1 = A^1(\bU)$ must be diagonalizable over $\R$, with all eigenvalues real and semi-simple.  In the present context, Humpherys works with real, smoothly varying matrix symbols, instead of working with constant matrices. Then Humpherys assumes the symmetric matrix symbol $S(\xi) A(\xi)$ (which contains higher order terms too) to be diagonalizable over $\R$ with constant multiplicity for all $\xi \in \R$. This condition is imposed as a key ingredient in order to perform the aforementioned extension.
\end{remark}

\begin{remark}
\label{remtypediss}
A few comments about the proof of Theorem \ref{HuThSym} are in order. Humpherys actually provides an explicit formula for the compensating matrix symbol in the general case, which is based on the Drazin's inverse of the commutator operator. From the expression of the latter, it is then proved that property (c) implies property (a) by showing the estimate
\[
\Re \lambda(\xi) \leq - \, \frac{\xi^2 \gamma(\xi)^2}{4 |K(\xi)| \big( |\xi| \gamma(\xi) + |K(\xi)| |B(\xi)|\big)} < 0,
\]
where $\gamma(\xi) > 0$ is the bound appearing in Definition \ref{ComMat} (b) and $K = K(\xi)$ denotes the compensating matrix (for details, see \cite{Hu05}). This estimate establishes, in turn, the decay rate of the solutions to the linear system. It is to be observed, however, that the compensating matrix symbol \emph{is not necessarily unique}, leading to different rates of decay. For instance, in a previous work \cite{PlV22}, we constructed a compensating matrix function for the isothermal Korteweg model that yields a better (actually, optimal) decay rate. Systems underlying different rates of decay at a linear level have been classified by Ueda \emph{et al.} \cite{UDK12, UDK18} as follows: the linear system is called strictly dissipative of type $(p,q)$, with $p, q \in \Z$, $p, q \geq 0$, provided that the solutions of the spectral problem \eqref{30} satisfy
\[
\Re \lambda(\xi) \leq - \, \frac{C |\xi|^{2p}}{(1 + |\xi|^2)^{q}}, \qquad \forall \xi \neq 0, 
\]
for some uniform constant $C > 0$. The system is of standard type when $p = q$ \cite{UDK12}, and of regularity-loss type when $p < q$ \cite{UDK18}. Notice that strict dissipativity of type $(1,0)$ is precisely that of the heat kernel. Hence, the third case when $p > q$ is called dissipativity of regularity-gain type \cite{KSX22}. It has been shown, for instance, that both the isothermal \cite{PlV22} and the non-isothermal \cite{KSX22} Korteweg fluids with viscosity belong to the latter class. Our main observation here is that, even though Humpherys' construction of the compensating matrix symbol is fundamental from a theoretical point of view, in practice one may profit from the degree of freedom in selecting the symbol in order to obtain a better linear decay rate, as we show in the sequel.
\end{remark}

Let us now consider the original linear system in Fourier space,
\begin{equation}
\label{63}
A^{0}\hW_{t} + \big(i \xi A(\xi) + \xi^{2} B\big) \hW= 0, 
\end{equation}
where $A^{0} \equiv A^0(\bU)$ and $B \equiv B(\bU)$ and $A(\xi)$ are given in \eqref{barA0A1B} and in \eqref{31}, respectively. Make the change of variables,
\begin{equation}\label{hU}
\hV := S(\xi)^{1/2}\big( A^{0} \big)^{1/2}\hW,
\end{equation}
where $S(\xi) > 0$ is the symmetrizer from Lemma \ref{Sym2Full}. Hence, system \eqref{63} can be recast as
\begin{equation}
\label{64}
\hV_{t} + \big( i \xi \tiA(\xi) + \xi^{2} \tiB(\xi) \big) \hV = 0, 
\end{equation}
where
\begin{equation}
\label{65}
\begin{aligned}
\tiA(\xi) &:= S(\xi)^{1/2}\big(A^{0}\big)^{-1/2}A(\xi) \big( A^{0} \big)^{-1/2} S(\xi)^{-1/2},\\
\tiB(\xi) &:= S(\xi)^{1/2}\big(A^{0}\big)^{-1/2}B \big(A^{0}\big)^{-1/2} S(\xi)^{-1/2}.
\end{aligned}
\end{equation}
Direct computations yield
\begin{equation}
\label{exprtiA}
\tiA(\xi) = \begin{pmatrix}
\bu & \beta^{1/2} & 0 \\ \beta^{1/2} & \bu & \bc \\ 0 & \bc & \bu
\end{pmatrix},
\end{equation}
where
\[
\bc := \frac{\bp_\theta \bthe^{1/2}}{\be_\theta^{1/2} \brho} > 0,
\]
is a constant, and
\[
\tiB(\xi) \equiv \tiB := \begin{pmatrix}
0 & 0 & 0 \\ 0 & \bmu/ \brho & 0 \\ 0 & 0 & \balp/\be_{\theta}\brho
\end{pmatrix},
\]
is a constant matrix. Notice that $\tiA(\xi)$ and $\tiB$ are symmetric.

\begin{lemma}
\label{lemgencoup}
The triplet $(I, \tiA, \tiB)$ is genuinely coupled. Moreover, the matrix symbol $\tiA(\xi)$ in \eqref{exprtiA} is of constant multiplicity in $\xi \in \R$.
\end{lemma}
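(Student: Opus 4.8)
The plan is to exploit the explicit and very simple structure of the matrices $\tiA(\xi)$ and $\tiB$. First I would identify the kernel of $\tiB$: since $\tiB$ is diagonal with entries $0$, $\bmu/\brho$ and $\balp/(\be_\theta\brho)$, and the last two are strictly positive by hypotheses \hyperref[H2]{\rm{(H$_2$)}} and \hyperref[H3]{\rm{(H$_3$)}}, the matrix $\tiB$ has rank two and $\ker \tiB = \Span\{e_1\}$ with $e_1 = (1,0,0)^\top$, independently of $\xi$. For the genuine coupling condition in Definition~\ref{defstrictdiss}(ii), which here (with $A^0 = I$) asks that $\big(\varrho I + \tiA(\xi)\big) V \neq 0$ for every nonzero $V \in \ker\tiB$ and every $\varrho \in \R$, it therefore suffices to treat $V = e_1$. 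A direct multiplication gives
\[
\big(\varrho I + \tiA(\xi)\big) e_1 = \big( \varrho + \bu, \; \beta(\xi)^{1/2}, \; 0 \big)^\top,
\]
whose middle component $\beta(\xi)^{1/2}$ is strictly positive because $\beta(\xi) = \bp_\rho + \xi^2 \bk\brho \geq \bp_\rho > 0$, using $\bp_\rho > 0$ from \hyperref[H3]{\rm{(H$_3$)}} and $\bk, \brho > 0$. Hence this vector never vanishes, and the triplet $(I, \tiA, \tiB)$ is genuinely coupled.

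For the constant multiplicity claim I would compute the characteristic polynomial of $\tiA(\xi)$ directly. Setting $\nu := \bu - \lambda$ and expanding the $3\times 3$ determinant along a row, one finds $\det\big(\tiA(\xi) - \lambda I\big) = \nu\big(\nu^2 - (\bc^2 + \beta(\xi))\big)$, so the eigenvalues of $\tiA(\xi)$ are
\[
\lambda_2 = \bu, \qquad \lambda_{1,3}(\xi) = \bu \mp \sqrt{\bc^2 + \beta(\xi)}.
\]
Because $\bc^2 + \beta(\xi) \geq \beta(\xi) > 0$ for every $\xi \in \R$, these three eigenvalues are real, pairwise distinct, and, being those of a symmetric matrix, semisimple; in particular each has multiplicity exactly one for all $\xi \in \R$. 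Thus $\tiA(\xi)$ is of constant multiplicity, which is also precisely the hypothesis needed to invoke Theorem~\ref{HuThSym}.

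I do not expect any real obstacle: the argument reduces to reading off $\ker\tiB$ and to a one-line determinant computation. The only point requiring a little care — and the place where the standing hypotheses enter — is the uniform strict positivity of $\beta(\xi)$, which is needed both to keep $e_1$ out of every eigenspace of $\tiA(\xi)$ and to keep the triple of eigenvalues separated; this is exactly where $\bp_\rho > 0$ from \hyperref[H3]{\rm{(H$_3$)}}, together with the positivity of $\bk$ and $\brho$, is used.
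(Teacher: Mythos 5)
Your proof is correct and follows essentially the same route as the paper: identify $\ker\tiB = \Span\{e_1\}$, compute $(\varrho I + \tiA(\xi))e_1$ directly and use $\beta(\xi)\geq\bp_\rho>0$ to rule out vanishing, then read off the three simple eigenvalues $\bu$ and $\bu\mp\sqrt{\bc^2+\beta(\xi)}$ from the factored characteristic polynomial. The only (harmless) difference is that you make explicit the scaling reduction to $V=e_1$, while the paper works with a general $\tiV=(a,0,0)^\top$, $a\neq 0$.
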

\begin{proof}
Clearly, $\ker \tiB = \{ (a,0,0)^\top \in \R^3 \, : \, a \in \R\}$. Hence, for any $\tiV \in \ker \tiB$, $\tiV \neq 0$, there holds
\[
(\varrho I + \tiA(\xi)) \tiV = \begin{pmatrix} (\varrho + \bu)a \\ \beta(\xi)^{1/2}a \\ 0
\end{pmatrix} \neq 0,
\] 
for all $\varrho \in \R$, because $a \neq 0$. This shows genuine coupling of the triplet $(I, \tiA, \tiB)$. Now, let us compute the eigenvalues of $\tiA(\xi)$. From its characteristic polynomial, namely
\[
0 = \det \big(\tiA(\xi) - \lambda I \big) = (\bu - \lambda) \big( (\bu - \lambda)^2 - \bc^2 - \beta(\xi)\big),
\]
we obtain
\[
\lambda_1(\xi) := \bu - (\bc^2 + \beta(\xi))^{1/2} < \lambda_2(\xi) := \bu < \lambda_3(\xi) := \bu + (\bc^2 + \beta(\xi))^{1/2}.
\]
Thus, all eigenvalues are real and simple for any value of $\xi \in \R$. The lemma is proved.
\end{proof}

\begin{corollary}
The linear Korteweg operator $\cL$ defined in \eqref{29} satisfies the genuine coupling condition. 
\end{corollary}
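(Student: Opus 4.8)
The plan is to deduce genuine coupling for $\cL$ directly from Lemma \ref{lemgencoup} by transporting the property through the invertible linear change of variables \eqref{hU}. Recall that, by Definition \ref{defstrictdiss}(ii), the operator $\cL$ satisfies the genuine coupling condition exactly when the triplet $(A^{0}, A(\xi), B(\xi))$ of \eqref{30} --- with $B(\xi) = \xi^{2}B$ --- is genuinely coupled, and that Lemma \ref{lemgencoup} already furnishes this property for the transformed triplet $(I, \tiA(\xi), \tiB)$. Since genuine coupling is a purely algebraic statement about $\ker B(\xi)$ and the matrix pencil $\varrho A^{0} + A(\xi)$, I expect it to be preserved under the conjugation encoded in \eqref{65}, so the proof reduces to verifying this invariance.

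Concretely, I would fix $\xi \neq 0$ and set $T = T(\xi) := S(\xi)^{1/2}(A^{0})^{1/2}$, which is invertible since $S(\xi)$ and $A^{0}$ are symmetric and positive definite. From the formulas \eqref{65} one reads off $T(A^{0})^{-1}A^{0}T^{-1} = I$, $T(A^{0})^{-1}A(\xi)T^{-1} = \tiA(\xi)$ and $T(A^{0})^{-1}B(\xi)T^{-1} = \xi^{2}\tiB$. Using the invertibility of $T(A^{0})^{-1}$, the map $V \mapsto TV$ then carries $\ker B(\xi)$ isomorphically onto $\ker \tiB$ for every $\xi \neq 0$, with $V \neq 0 \iff TV \neq 0$. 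Next, for any $\varrho \in \R$, writing $\tilde V := TV$ and multiplying $(\varrho A^{0} + A(\xi))V$ on the left by the invertible matrix $T(A^{0})^{-1}$ produces exactly $(\varrho I + \tiA(\xi))\tilde V$, so that $(\varrho A^{0} + A(\xi))V \neq 0$ if and only if $(\varrho I + \tiA(\xi))\tilde V \neq 0$. These two facts together show that $(A^{0}, A(\xi), B(\xi))$ is genuinely coupled at $\xi$ precisely when $(I, \tiA(\xi), \tiB)$ is, and the latter holds by Lemma \ref{lemgencoup}.

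I do not anticipate any real obstacle: the entire argument is the elementary bookkeeping that the conjugations in \eqref{65} send the pencil $\varrho A^{0} + A(\xi)$ to $\varrho I + \tiA(\xi)$ and $\ker B(\xi)$ to $\ker\tiB$. The only point worth a remark is that one only needs to test frequencies $\xi \neq 0$, which is exactly what Definition \ref{defstrictdiss}(ii) demands (at $\xi = 0$ one has $B(0) = 0$, hence $\ker B(0) = \R^{3}$ and the pencil condition fails, but that frequency is excluded). As an alternative one could invoke the equivalence Theorem \ref{HuThSym}, combining the symbol symmetrizer of Lemma \ref{Sym2Full} with the constant-multiplicity assertion of Lemma \ref{lemgencoup}; the transformation argument above is, however, shorter and keeps the bookkeeping of hypotheses to a minimum.
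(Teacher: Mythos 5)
Your proof is correct and follows exactly the route the paper takes: the paper's proof simply states that genuine coupling is preserved under the change of variables \eqref{hU} and ``leaves the details to the reader,'' which is precisely what you supply by conjugating the pencil $\varrho A^{0} + A(\xi)$ and the kernel of $B(\xi)$ with $T(\xi) = S(\xi)^{1/2}(A^{0})^{1/2}$ and then invoking Lemma \ref{lemgencoup}. Your closing suggestion of an alternative via Theorem \ref{HuThSym} does not actually bypass the transport argument (the theorem gives an \emph{equivalence}, so one of its three conditions still has to be verified directly for the relevant triplet, which brings you back to the same conjugation bookkeeping), but this side remark does not affect the correctness of your main argument.
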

\begin{proof}
Under the transformation defined in \eqref{hU}, namely, $\hV \mapsto S(\xi)^{1/2} (A^0)^{1/2} \hW$, it is easy to prove that the triplet $(A^0, A, B)$ is genuinely coupled if and only if the triplet $(I, \tiA, \tiB)$ is genuinely coupled as well (details are left to the reader). The conclusion then follows from Lemma \ref{lemgencoup}.
\end{proof}

By virtue of the equivalence Theorem \ref{HuThSym} and Lemma \ref{lemgencoup}, we deduce the existence of a compensating matrix symbol for the triplet $(I, \tiA, \tiB )$ associated to the linear symmetric system \eqref{64}. However, as discussed in Remark \ref{remtypediss}, in applications it is more convenient to construct the symbol directly. In that sense, the following result provides more valuable information than the equivalence theorem.

\begin{lemma}
\label{lemourK}
There exists a smooth compensating matrix symbol, $\tiK = \tiK(\xi) \in C^\infty(\R; \R^{3 \times 3})$, for the triplet $(I, \tiA(\xi), \tiB)$ of system \eqref{64}. In other words, $\tiK$ is skew-symmetric and 
\begin{equation}
\label{compmatprop}
[\tiK(\xi) \tiA(\xi)]^s + \tiB \geq \overline{\gamma} I > 0, 
\end{equation}
for some uniform constant $\overline{\gamma} > 0$ independent of $\xi \in \R$. Moreover, there hold
\begin{equation}
\label{tiKbded}
|\xi \tiK(\xi)|, | \tiK(\xi) | \leq C,
\end{equation}
for all $\xi \in \R$ and some uniform constant $C>0$. 
\end{lemma}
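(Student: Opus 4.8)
The plan is to exhibit $\tiK$ explicitly rather than to invoke the Drazin--inverse construction behind Theorem~\ref{HuThSym}. Since the skew-symmetric $3\times 3$ matrices form a three-dimensional space, I would look for a symbol of the form
\[
\tiK(\xi) = \begin{pmatrix} 0 & a(\xi) & 0 \\ -a(\xi) & 0 & c(\xi) \\ 0 & -c(\xi) & 0 \end{pmatrix}, \qquad a, c \in C^\infty(\R),
\]
i.e.\ with the $(1,3)$-entry set to zero from the outset, and then tune the two scalar functions $a,c$ so that $[\tiK\tiA]^s + \tiB$ becomes \emph{diagonal} with all diagonal entries bounded below by a positive constant uniformly in $\xi$. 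Because $\tiK$ is skew-symmetric and $\tiA(\xi)$ symmetric, $[\tiK\tiA]^s = \tfrac12(\tiK\tiA - \tiA\tiK)$ is half the commutator, and a short computation with \eqref{exprtiA} shows that, with the above ansatz, the $(1,2)$- and $(2,3)$-entries of $[\tiK\tiA]^s$ vanish automatically, the $(1,3)$-entry equals $\tfrac12\big(a\bc - c\,\beta(\xi)^{1/2}\big)$, and the diagonal is $\big(a\,\beta^{1/2},\ -a\,\beta^{1/2} + c\,\bc,\ -c\,\bc\big)$, where $\beta = \beta(\xi) = \bp_\rho + \xi^2\bk\brho > 0$ is the smooth function of \eqref{defbeta}. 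Thus the only off-diagonal entry is killed exactly when $c\,\beta^{1/2} = a\,\bc$.

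Next I would fix the scale by choosing $a(\xi) := \delta\,\beta(\xi)^{-1/2}$ --- hence $c(\xi) = \delta\,\bc\,\beta(\xi)^{-1}$ --- for a small constant $\delta > 0$ to be picked. With this choice
\[
[\tiK(\xi)\tiA(\xi)]^s + \tiB = \mathrm{diag}\Big(\delta,\ -\delta + \tfrac{\delta\,\bc^2}{\beta(\xi)} + \tfrac{\bmu}{\brho},\ -\tfrac{\delta\,\bc^2}{\beta(\xi)} + \tfrac{\balp}{\be_\theta\brho}\Big),
\]
and, since $\beta(\xi) \ge \bp_\rho$, taking
\[
\delta := \min\Big\{\tfrac{\bmu}{2\brho},\ \tfrac{\balp\,\bp_\rho}{2\,\be_\theta\brho\,\bc^2}\Big\} > 0, \qquad \overline{\gamma} := \min\Big\{\delta,\ \tfrac{\bmu}{2\brho},\ \tfrac{\balp}{2\,\be_\theta\brho}\Big\} > 0,
\]
makes each of the three diagonal entries $\ge \overline{\gamma}$, which is \eqref{compmatprop}; note that $\overline{\gamma}$ is a genuine positive constant, not degenerating as $\xi\to 0$ or $\xi\to\infty$, and this is the additional information that Theorem~\ref{HuThSym} does not supply. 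Smoothness and skew-symmetry of $\tiK$ are immediate, since $\beta$ is a strictly positive polynomial in $\xi$, so $\beta^{-1/2}, \beta^{-1} \in C^\infty(\R)$.

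Finally I would verify \eqref{tiKbded}. The entries of $\tiK(\xi)$ are dominated by $\delta\,\beta^{-1/2} \le \delta\,\bp_\rho^{-1/2}$ and $\delta\,\bc\,\beta^{-1} \le \delta\,\bc\,\bp_\rho^{-1}$, giving $|\tiK(\xi)| \le C$; and from $\beta(\xi) \ge \xi^2\bk\brho$ together with the arithmetic--geometric mean bound $\beta(\xi) \ge 2|\xi|(\bp_\rho\bk\brho)^{1/2}$ one gets $|\xi|\,\delta\,\beta^{-1/2} \le \delta(\bk\brho)^{-1/2}$ and $|\xi|\,\delta\,\bc\,\beta^{-1} \le \tfrac12\delta\,\bc\,(\bp_\rho\bk\brho)^{-1/2}$, so $|\xi\,\tiK(\xi)| \le C$. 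The one point that requires care --- and the reason a direct construction is worthwhile --- is the choice of the $\xi$-weight in $a$ (and hence in $c$): a constant $a$ would make the $(1,1)$-entry $a\,\beta^{1/2}$ grow like $|\xi|$, improving coercivity, but would destroy the bound on $\xi\,\tiK(\xi)$; the weight $\beta^{-1/2}$ is precisely the balance reconciling uniform coercivity with \eqref{tiKbded} (and, since then $|\tiK(\xi)| \sim |\xi|^{-1}$ for large $\xi$, it feeds into Humpherys' estimate in Remark~\ref{remtypediss} to give the regularity-gain decay rate).
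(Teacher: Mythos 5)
Your construction is correct and is essentially identical to the paper's: you choose the same skew-symmetric ansatz with zero $(1,3)$-entry, the same relation $c\beta^{1/2}=a\bc$ to kill the off-diagonal term, and the same $\beta^{-1/2}$ weight (the paper writes $\tiK=\delta\cdot\text{(matrix with entries }1,\bc\beta^{-1/2})$ with $\delta=\epsilon\beta^{-1/2}$, which gives precisely your $a=\delta\beta^{-1/2}$, $c=\delta\bc\beta^{-1}$). The only difference is cosmetic: you pin down a specific $\delta$ and $\overline\gamma$, whereas the paper allows $\epsilon$ to range in an interval and defines $\overline\gamma$ from it.
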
 
\begin{remark}
\label{remgoodK}
The new features of this compensating matrix symbol, which cannot be deduced from the equivalence theorem, are that, (i) the constant $\overline{\gamma} > 0$ in \eqref{compmatprop} can be chosen uniformly in $\xi \in \R$; and, (ii) that both $|\tiK(\xi)|$ and $|\xi \tiK(\xi)|$ are uniformly bounded above. This last property has important consequences, such as the decay rate of the solutions to the linearized problem, yielding a dissipativity of regularity-gain type (see Remark \ref{remtypediss}).
\end{remark}
\begin{proof}[Proof of Lemma \ref{lemourK}]
We proceed by inspection and propose $\tiK(\xi)$ to be of the form
\[
\tiK(\xi) := \delta \begin{pmatrix}
0 & a & 0 \\ -a & 0 & b \\ 0 & -b & 0
\end{pmatrix},
\]
where $\delta$, $a$ and $b$ are to be determined later. Clearly $\tiK$ is skew-symmetric. Let us compute $\tiK(\xi) \tiA(\xi)$. The result is,
\[
\begin{aligned}
\tiK(\xi) \tiA(\xi) &= \delta \begin{pmatrix}
0 & a & 0 \\ -a & 0 & b \\ 0 & -b & 0
\end{pmatrix} \begin{pmatrix}
\bu & \beta(\xi)^{1/2} & 0 \\ \beta(\xi)^{1/2} & \bu & \bc \\ 0 & \bc &\bu
\end{pmatrix} \\ &= \delta \begin{pmatrix}
a \beta(\xi)^{1/2} & a\bu & a\bc \\ -a\bu & b\bc -a\beta(\xi)^{1/2} & b\bu \\ -b\beta(\xi)^{1/2} & -b \bu & -b\bc
\end{pmatrix}.
\end{aligned}
\]
We choose $a \equiv1$ and $b=\bc\beta(\xi)^{-1/2}$, so that 
\[
\tiK(\xi) \tiA(\xi) = \delta \begin{pmatrix}
\beta(\xi)^{1/2} & \bu & \bc \\ -\bu & \bc^2\beta(\xi)^{-1/2} - \beta(\xi)^{1/2} & b\bu \\ -\bc & -b\bu & -\bc^2 \beta(\xi)^{-1/2}
\end{pmatrix}.
\]
The symmetric part of last matrix is
\[
\big[\tiK(\xi) \tiA(\xi) \big]^{s} = \delta \begin{pmatrix}
\beta(\xi)^{1/2} & 0 & 0 \\ 0 & \bc^2\beta(\xi)^{-1/2} - \beta(\xi)^{1/2} & 0 \\ 0 & 0 & -\bc^2 \beta(\xi)^{-1/2}
\end{pmatrix}.
\]
We then choose $\delta = \epsilon \beta(\xi)^{-1/2}$ for some $0 < \epsilon \ll 1$ small. This yields
\[
\begin{aligned}
\big[\tiK(\xi) \tiA(\xi) \big]^{s}+ \tiB &= \begin{pmatrix}
\epsilon & 0 & 0 \\ 0 & \bmu/ \brho + \epsilon (\bc^2 / \beta(\xi) \, -1) & 0 \\ 0 & 0 & \balp/\be_{\theta}\brho - \epsilon \bc^2/\beta(\xi)
\end{pmatrix}.
\end{aligned}
\]
For this matrix to be positive definite we need $\epsilon > 0$ to satisfy
\begin{align}
\epsilon &> \overline{\gamma} > 0, \label{lai}\\
\frac{\bmu}{\brho} + \epsilon \Big( \frac{\bc^2}{\beta(\xi)} -1 \Big) &> \overline{\gamma} > 0, \label{laii}\\
\frac{\balp}{\be_\theta \brho} - \epsilon \frac{\bc^2}{\beta(\xi)} &> \overline{\gamma} > 0, \label{laiii}
\end{align}
for all $\xi \in \R$ and for some uniform $\overline{\gamma} > 0$. From \eqref{defbeta} we have
\[
\min_{\xi \in \R} \beta(\xi) = \min_{\xi \in \R} \big( \bp_\rho + \xi^2 \bk \brho \big) = \bp_\rho > 0.
\]
Therefore, it is clear that choosing any $\epsilon > 0$ satisfying
\begin{equation}
\label{choiceep}
0 < \overline{\gamma} := \frac{1}{4} \min \Big\{ \frac{\balp}{\be_\theta \brho}, \, \frac{\bmu}{\brho}, \, \frac{\balp \,\bp_\rho}{\be_\theta \brho \,\bc^2} \Big\} < \epsilon < \frac{1}{2}\min \Big\{ \frac{\bmu}{\brho}, \, \frac{\balp \,\bp_\rho}{\be_\theta \brho \,\bc^2}\Big\},
\end{equation}
we clearly obtain \eqref{lai}, together with
\[
\frac{\bmu}{\brho} + \epsilon \Big( \frac{\bc^2}{\beta(\xi)} -1 \Big) \geq \frac{\bmu}{\brho} - \epsilon > \frac{\bmu}{2\brho} > \overline{\gamma} > 0,
\]
and,
\[
\frac{\balp}{\be_\theta \brho} - \epsilon \frac{\bc^2}{\beta(\xi)} \geq \frac{\balp}{\be_\theta \brho} - \epsilon \frac{\bc^2}{\bp_\rho} > \frac{\balp}{2\be_\theta \brho} > \overline{\gamma} > 0,
\]
yielding \eqref{laii} and \eqref{laiii}, respectively. In this fashion, we conclude that the selected symbol,
\begin{equation}
\label{tildeK}
\tiK(\xi) := \frac{\epsilon}{\beta(\xi)^{1/2}} \begin{pmatrix} 0 & 1 & 0 \\ -1 & 0 & \bc \beta(\xi)^{-1/2} \\ 0 & - \bc \beta(\xi)^{-1/2} & 0
\end{pmatrix},
\end{equation}
where $\epsilon > 0$ satisfies \eqref{choiceep}, is a compensating matrix function for the triplet $(I, \tiA(\xi), \tiB)$ of system \eqref{64}, because it is smooth, $\tiK \in C^{\infty}(\R;\R^{3 \times 3})$, it is clearly skew-symmetric, and satisfies
\[
\big[\tiK(\xi) \tiA(\xi) \big]^{s}+ \tiB \geq \overline{\gamma} I > 0,
\]
for all $\xi \in \R$ for some uniform $\overline{\gamma} > 0$ defined in \eqref{choiceep}. Finally, since $\beta(\xi) = \bp_\rho + \xi^2 \bk \, \brho = O(\xi^2)$, it is then clear that there exists a positive constant $C > 0$ such that $|\beta(\xi)^{-1/2}|$, $|\xi \beta(\xi)^{-1/2}| \leq C$ and $|\beta(\xi)^{-1}|$, $|\xi \beta(\xi)^{-1}| \leq C$ for all $\xi \in \R$, yielding \eqref{tiKbded} as claimed. The lemma is proved.
\end{proof}

\subsection{Energy estimate and linear decay of the semigroup}
\label{secee}

To conclude this section, we now establish the basic energy estimate in the Fourier space for the full linear Korteweg system \eqref{27}. This energy estimate implies the decay rate of the associated semigroup. The precise form of the compensating matrix symbol from Lemma \ref{lemourK} plays a key role.

\begin{lemma}[basic energy estimate]
\label{lembee}
The solutions $\hV = \hV(\xi,t)$ to the linear system \eqref{64} satisfy the pointwise estimate
\begin{equation}
\label{bestV}
|\hV(\xi,t)| \leq C \exp (- c_0 \xi^2 t) |\hV(\xi,0)|,
\end{equation}
for all $\xi \in \R$, $t \geq 0$ and some uniform constants $C,c_0 > 0$.
\end{lemma}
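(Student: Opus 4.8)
The strategy is the standard Kawashima energy method in Fourier space, but run with the explicit compensating symbol $\tiK(\xi)$ of Lemma \ref{lemourK} rather than the abstract one from the equivalence theorem, so as to extract the \emph{uniform} rate $\xi^2$. First I would take the $\C^3$-inner product of \eqref{64} with $\hV$ and add the conjugate; since $\tiA(\xi)$ in \eqref{exprtiA} is real symmetric, the term $i\xi\langle \tiA(\xi)\hV,\hV\rangle$ is purely imaginary, and since $\tiB\geq 0$ one obtains the ``basic'' identity
\[
\frac{d}{dt}\,|\hV(\xi,t)|^2 = -2\xi^2 \langle \tiB\hV,\hV\rangle \leq 0 .
\]
This already gives dissipation, but only on the range of $\tiB$ (the velocity and temperature slots); nothing is controlled on $\ker\tiB$ (the density slot), and the compensating symbol is precisely what recovers it.

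Next I would introduce the cross quantity $q(\xi,t):=\langle i\tiK(\xi)\hV,\hV\rangle$, which is real because $i\tiK$ is Hermitian ($\tiK$ being real and skew-symmetric), and differentiate it along solutions of \eqref{64}. Substituting $\hV_t=-(i\xi\tiA+\xi^2\tiB)\hV$ and using $\tiK$ skew-symmetric one gets
\[
\frac{d}{dt}\,q = 2\xi\,\langle [\tiK(\xi)\tiA(\xi)]^s\,\hV,\hV\rangle + 2\xi^2\,\Im\langle \tiK(\xi)\tiB\,\hV,\hV\rangle .
\]
The compensating inequality \eqref{compmatprop}, i.e. $[\tiK\tiA]^s+\tiB\geq\overline{\gamma}I$, bounds the first term below by $2\xi\big(\overline{\gamma}|\hV|^2-\langle\tiB\hV,\hV\rangle\big)$. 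For the second term I would write $\xi^2\tiK\tiB=\xi\,(\xi\tiK)\,\tiB^{1/2}\,\tiB^{1/2}$, so that the uniform bound $|\xi\tiK(\xi)|\leq C$ from \eqref{tiKbded} and Cauchy--Schwarz give $|\xi^2\Im\langle\tiK\tiB\hV,\hV\rangle|\leq C|\xi|\,\langle\tiB\hV,\hV\rangle^{1/2}|\hV|$, hence after Young's inequality $|\xi^3\Im\langle\tiK\tiB\hV,\hV\rangle|\leq \eta\,\xi^2\langle\tiB\hV,\hV\rangle+C_\eta\,\xi^2|\hV|^2$ with $\eta$ at our disposal.

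Finally I would use the Lyapunov functional $\mathcal{E}(\xi,t):=|\hV(\xi,t)|^2-\delta\,\xi\,q(\xi,t)$ with $\delta>0$ small. The point is that $|\xi q|\leq |\xi\tiK(\xi)|\,|\hV|^2\leq C|\hV|^2$ \emph{uniformly} in $\xi$ — this is exactly where \eqref{tiKbded} is used, and what upgrades the generic $\xi^2/(1+\xi^2)$ rate to the uniform $\xi^2$ rate of regularity-gain type — so for $\delta$ small $\tfrac12|\hV|^2\leq\mathcal{E}\leq\tfrac32|\hV|^2$. Multiplying the $q$-identity by $\xi$, choosing $\eta$ large enough that the density coefficient $\delta(2\overline{\gamma}-C_\eta/\eta)$ is positive, and then $\delta$ small enough that the $\langle\tiB\hV,\hV\rangle$ error terms are absorbed by the $-2\xi^2\langle\tiB\hV,\hV\rangle$ of the basic identity, yields $\frac{d}{dt}\mathcal{E}\leq -c\,\xi^2|\hV|^2\leq -c_0\,\xi^2\mathcal{E}$ for uniform constants $c,c_0>0$. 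Gronwall's inequality gives $\mathcal{E}(\xi,t)\leq e^{-c_0\xi^2 t}\mathcal{E}(\xi,0)$, and the equivalence $\mathcal{E}\simeq|\hV|^2$ then delivers \eqref{bestV} with $C=\sqrt{3}$ (and rate $c_0/2$). The main obstacle is the $\xi$-bookkeeping: using the multiplier $\xi$ (rather than a bounded, $\xi$-decaying multiplier) on the cross term without destroying the coercivity of $\mathcal{E}$ or letting the $\tiK\tiB$ interaction overwhelm the basic dissipation — this is what forces one to exploit the sharp bound $|\xi\tiK(\xi)|\leq C$ of Lemma \ref{lemourK}, a feature not available from Humpherys' general construction.
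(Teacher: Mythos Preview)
Your proposal is correct and follows essentially the same approach as the paper: both combine the basic energy identity with the cross-energy $\xi\langle i\tiK(\xi)\hV,\hV\rangle$ to form a Lyapunov functional equivalent to $|\hV|^2$, exploit the compensating inequality \eqref{compmatprop} together with the uniform bound $|\xi\tiK(\xi)|\leq C$ from \eqref{tiKbded} to control both the equivalence and the $\tiK\tiB$ cross term, and close via Gronwall. The only differences are cosmetic (you differentiate $q$ directly while the paper multiplies \eqref{64} by $-i\xi\tiK$ first; your Young-parameter bookkeeping ``$\delta(2\overline{\gamma}-C_\eta/\eta)$'' is slightly garbled but the intended choice---make the $|\hV|^2$ error small relative to $\overline{\gamma}$, then take $\delta$ small---is the same as the paper's choice $\vep=\tfrac12\overline{\gamma}$ followed by $\delta$ small).
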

\begin{proof}
Take the inner product in $\C^3$ of $\hV$ with equation \eqref{64} to obtain
\[
\langle \hV, \hV_t \rangle + \langle \hV, i\xi \tiA(\xi) \hV \rangle + \xi^2 \langle \hV, \tiB \hV \rangle = 0
\]
(recall that $\tiB(\xi) = \tiB$ is constant). Since $\tiA(\xi)$ and $\tiB$ are symmetric, taking the real part of last equation yields
\begin{equation}
\label{la8}
\tfrac{1}{2} \partial_t |\hV|^2 + \xi^2 \< \hV, \tiB \hV \> = 0.
\end{equation}
Let us multiply equation \eqref{64} by $- i \xi \tiK(\xi)$, where $\tiK(\xi)$ is the compensating matrix symbol constructed in Lemma \ref{lemourK}. The result is
\[
- i \xi \tiK(\xi) \hV_{t} + \xi^2 \tiK(\xi) \tiA(\xi) \hV - i \xi^3 \tiK(\xi) \tiB \hV = 0.
\]
If we take the $\C^3$-inner product of this equation with $\hV$ then we arrive at
\begin{equation}
\label{la9}
- \< \hV, i \xi \tiK(\xi) \hV_t \> + \xi^2 \< \hV, \tiK(\xi) \tiA(\xi) \hV \> - \< \hV, i \xi^3 \tiK(\xi) \tiB \hV \> = 0.
\end{equation}
Now, by virtue of $\tiK$ being skew-symmetric, we clearly have the relation 
\[
\Re \< \hV, i \xi \tiK \hV_t \> = \tfrac{1}{2} \xi \partial_t \< \hV, i \tiK \hV \>.
\] 
This implies that taking the real part of equation \eqref{la9} leads to
\[
- \tfrac{1}{2} \xi \partial_t \< \hV, i \tiK(\xi) \hV \> + \xi^2 \< \hV, [\tiK(\xi) \tiA(\xi)]^s \hV \> = \Re \big( i \xi^3 \< \hV, \tiK(\xi) \tiB \hV \> \big).
\]
Since $\tiB$ is positive semi-definite and symmetric, we now estimate,
\[
\begin{aligned}
\Re \big( i \xi^3 \< \hV, \tiK(\xi) \tiB \hV \> \big) &\leq \left| \xi^3 \langle \hV, \tiK(\xi) \tiB \hV \rangle \right| \\
&= \left| \langle \sqrt{\vep} \xi \hV, (\xi \tiK(\xi)) \tiB^{1/2} \frac{1}{\sqrt{\vep}} \xi \tiB^{1/2} \hV \rangle \right|\\
&\leq \vep \xi^2 |\hV|^2 + C_\vep \xi^2 | \tiB^{1/2} \hV |^2\\
&= \vep \xi^2 |\hV|^2 + C_\vep \xi^2 \langle \hV, \tiB \hV \rangle,
\end{aligned}
\]
for any $\vep > 0$ and where $C_\vep > 0$ is a uniform constant that only depends on $|\tiB^{1/2}|$, $\vep > 0$ and on the uniform bound for $|\xi \tiK(\xi)|$ in \eqref{tiKbded}. Upon substitution, we obtain
\begin{equation}
\label{la10}
- \tfrac{1}{2} \xi \partial_t \< \hV, i \tiK(\xi) \hV \> + \xi^2 \< \hV, [\tiK(\xi) \tiA(\xi)]^s \hV \> \leq \vep \xi^2 |\hV|^2 + C_\vep \xi^2 \langle \hV, \tiB \hV \rangle.
\end{equation}
Now multiply inequality \eqref{la10} by any $\delta > 0$ and add it to equation \eqref{la8} in order to get the following estimate,
\begin{equation}
\label{la11}
\begin{aligned}
\tfrac{1}{2} \partial_t \big( |\hV|^2 - \delta \xi \< \hV, i \tiK(\xi) \hV \> \big) + \xi^2 \big( \delta \< \hV, [\tiK(\xi) \tiA(\xi)]^s \hV \> &+ (1-\delta C_\vep)  \< \hV, \tiB \hV \> \big) \\&\leq \vep \delta \xi^2 |\hV|^2.
\end{aligned}
\end{equation}
Let us define the quantity,
\[
\Upsilon := |\hV|^2 - \delta \xi\< \hV, i \tiK(\xi) \hV \>.
\]
By virtue of $\tiK(\xi)$ being skew-symmetric, it is easy to verify that the quantity $\Upsilon$ is real. In addition, since $|\xi \tiK(\xi)| \leq C$ uniformly for all $\xi$ (see \eqref{tiKbded}), we deduce the existence of $1 \gg \delta_0 > 0$ sufficiently small such that there exists a uniform constant $C_1 > 0$ for which
\[
C_1^{-1} |\hV|^2 \leq \Upsilon \leq C_1 |\hV|^2,
\]
provided that $0 < \delta < \delta_0$. Therefore, $\Upsilon$ is an energy, equivalent to $|\hV|^2$, for $\delta > 0$ small.

Let $\overline{\gamma} > 0$ be the uniform constant appearing in \eqref{compmatprop} and choose $\vep = \tfrac{1}{2}\overline{\gamma}$. (This fixes the constant $C_\ep$.) Hence, we may take $0 < \delta < \delta_0$ small enough such that $\delta= \mbox{min} \left\lbrace \delta, 1-\delta C_\vep \right\rbrace$. From one of the main properties of compensating matrix symbols, namely, estimate \eqref{compmatprop}, we obtain 
\[
\delta \< \hV, [\tiK(\xi) \tiA(\xi)]^s \hV \> + (1-\delta C_\vep)\<\hV, \tiB \hV \> \geq \delta \langle \hV, ([\tiK(\xi) \tiA(\xi)]^s + \tiB) \hV \rangle 
\geq \delta \overline{\gamma} |\hV|^2.
\] 
Substituting into \eqref{la11} yields
\[
\partial_t \Upsilon + c_0 \xi^2 \Upsilon \leq 0,
\]
where $c_0 := \delta \overline{\gamma}/ C_1 > 0$. This implies estimate \eqref{bestV} and the lemma is proved.
\end{proof}
\begin{remark}
From estimate \eqref{bestV} we directly deduce that the eigenvalues in Fourier space of system \eqref{64} (equivalently, of system \eqref{30}) satisfy 
$\lambda(\xi) \leq  - c_0 \xi^2$, with $c_0 > 0$, yielding a decay rate equal to that of the heat kernel. This dissipative structure can be said to be of regularity-gain type (see Remark \ref{remtypediss}).
\end{remark}

As a consequence of Lemma \ref{lembee} we obtain the following estimate for the solutions to \eqref{63}.

\begin{corollary}
\label{corlindecayW}
The solutions $\hW(\xi,t)$ to the linear system \eqref{63} satisfy the estimate 
\begin{equation}
\label{39}
\begin{aligned}
(1+\xi^{2})\vert &\hW_{1}(\xi, t) \vert^{2} + \vert \hW_{2}(\xi, t) \vert^{2} + \vert \hW_{3}(\xi, t) \vert^{2} \leq  \\ 
&\leq C \exp(- 2c_0 \xi^{2} t ) \big[ (1+\xi^{2})\vert \hW_{1}(\xi, 0) \vert^{2} + \vert \hW_{2}(\xi, 0) \vert^{2}+ \vert \hW_{3}(\xi, 0) \vert^{2} \big],  
\end{aligned}
\end{equation}
for all $t\geq 0$, $\xi \in \R $ and some uniform constant $C>0$.
\end{corollary}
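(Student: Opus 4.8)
The plan is to transfer the pointwise decay estimate of Lemma~\ref{lembee} back to the original Fourier variable $\hW$ by means of the change of variables \eqref{hU}. First I would note that both $A^{0}\equiv A^{0}(\bU)$ in \eqref{barA0A1B} and the symbol symmetrizer $S(\xi)$ in \eqref{32} are diagonal, hence they commute and $S(\xi)^{1/2}(A^{0})^{1/2}=(S(\xi)A^{0})^{1/2}$; consequently
\begin{equation*}
|\hV(\xi,t)|^{2}=\big\langle \hW(\xi,t),\,S(\xi)A^{0}\,\hW(\xi,t)\big\rangle=\frac{1}{\bthe}\Big(\frac{\beta(\xi)}{\brho}|\hW_{1}(\xi,t)|^{2}+\brho\,|\hW_{2}(\xi,t)|^{2}+\frac{\be_{\theta}\,\brho}{\bthe}\,|\hW_{3}(\xi,t)|^{2}\Big),
\end{equation*}
where $\beta(\xi)=\bp_{\rho}+\xi^{2}\bk\,\brho$ as in \eqref{defbeta}.

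The one point that deserves care is the observation that $\beta(\xi)$ is uniformly comparable to $1+\xi^{2}$. Since $\bp_{\rho},\bk,\brho>0$, the quotient $\beta(\xi)/(1+\xi^{2})$ is continuous and strictly positive on $\R$, with value $\bp_{\rho}$ at $\xi=0$ and limit $\bk\brho$ as $|\xi|\to\infty$; hence there are uniform constants $0<c_{1}\le C_{1}$ with $c_{1}(1+\xi^{2})\le\beta(\xi)\le C_{1}(1+\xi^{2})$ for all $\xi\in\R$. Plugging this into the identity above and absorbing the positive constant factors $\bthe^{-1}$, $\brho^{\pm1}$, $\be_{\theta}$, one obtains uniform $0<c_{2}\le C_{2}$ such that
\begin{equation*}
c_{2}\,\Phi(\xi,t)\le|\hV(\xi,t)|^{2}\le C_{2}\,\Phi(\xi,t),\qquad \Phi(\xi,t):=(1+\xi^{2})|\hW_{1}(\xi,t)|^{2}+|\hW_{2}(\xi,t)|^{2}+|\hW_{3}(\xi,t)|^{2},
\end{equation*}
for all $\xi\in\R$ and $t\ge0$.

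Finally I would invoke Lemma~\ref{lembee}: squaring \eqref{bestV} gives $|\hV(\xi,t)|^{2}\le C^{2}e^{-2c_{0}\xi^{2}t}|\hV(\xi,0)|^{2}$, and combining this with the two-sided bounds above at times $t$ and $0$ yields
\begin{equation*}
c_{2}\,\Phi(\xi,t)\le|\hV(\xi,t)|^{2}\le C^{2}e^{-2c_{0}\xi^{2}t}\,|\hV(\xi,0)|^{2}\le C^{2}C_{2}\,e^{-2c_{0}\xi^{2}t}\,\Phi(\xi,0),
\end{equation*}
so that $\Phi(\xi,t)\le(C^{2}C_{2}/c_{2})\,e^{-2c_{0}\xi^{2}t}\,\Phi(\xi,0)$, which is precisely \eqref{39}. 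There is no genuine obstacle here: the argument is a routine unwinding of the diagonal change of variables, and the only substantive input is the equivalence $\beta(\xi)\simeq 1+\xi^{2}$, which is exactly what produces the weight $(1+\xi^{2})$ on the density component $\hW_{1}$ and manifests the regularity-gain character of the dissipation noted after Lemma~\ref{lembee}.
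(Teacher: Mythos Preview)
Your proof is correct and follows essentially the same approach as the paper: compute $|\hV|^{2}$ via the diagonal transformation $\hV=S(\xi)^{1/2}(A^{0})^{1/2}\hW$, observe that $\beta(\xi)$ is uniformly comparable to $1+\xi^{2}$, and then transfer the pointwise decay estimate \eqref{bestV} from $\hV$ back to $\hW$. The paper's own proof proceeds in exactly this way, with only cosmetic differences in presentation.
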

\begin{proof}
Suppose $\hW = \hW(\xi,t)$ is a solution to system \eqref{63}. Then from transformation \eqref{hU} we know that $\hV = S(\xi)^{1/2} (A^0)^{1/2} \hW$ satisfies \eqref{64} and estimate \eqref{bestV}. Let us compute
\[
\begin{aligned}
|\hV|^2 &= | S(\xi)^{1/2} (A^0)^{1/2} \hW |^2 \\
&= \left| \frac{1}{\bthe^{1/2}} \begin{pmatrix} \beta(\xi)^{1/2}/ \, \bp_\rho^{1/2} & 0 & 0 \\ 0 & 1 & 0 \\ 0 & 0 & 1 \end{pmatrix} 
\begin{pmatrix}  \bp_\rho^{1/2} / \brho^{1/2} & 0 & 0 \\ 0 & \brho^{1/2} & 0 \\ 0 & 0 & \be_\theta^{1/2} \brho^{1/2} / \bthe^{1/2} \end{pmatrix} \hW \right|^2\\
&= \frac{\beta(\xi)}{\brho \, \bthe} |\hW_1|^2 + \frac{\brho}{\bthe}  |\hW_2|^2 + \frac{\be_\theta \brho}{\bthe^2}  |\hW_3|^2.
\end{aligned}
\]
Therefore, from estimate \eqref{bestV} we obtain
\[
\begin{aligned}
\beta(\xi)  |\hW_1|^2 +  |\hW_2|^2 &+  |\hW_3|^2 \leq \tilde{C} |\hV|^2 \leq \tilde{C} C \exp (- 2c_0 \xi^2 t) |\hV(\xi,0)|^2\\
&\leq \bar{C} \exp (- 2c_0 \xi^2 t) \big[ \beta(\xi)  |\hW_1(\xi,0)|^2 +  |\hW_2(\xi,0)|^2 +  |\hW_3(\xi,0)|^2 \big],
\end{aligned}
\]
for some uniform constant $\bar{C} > 0$. From \eqref{defbeta} we know there exists constants $C_j > 0$ such that $C_2 (1+\xi^2) \leq \beta(\xi) \leq C_1(1+\xi^2)$ for all $\xi \in \R$. Upon substitution we obtain estimate \eqref{39}.
\end{proof}

The pointwise estimate \eqref{39} in Fourier space then implies the decay in time of the solutions to the original linear system \eqref{27}. For later use we now assume that the initial condition for $W$ is also in $L^1(\R)^3$.

\begin{lemma}
\label{lemdecayW}
Assume $W = W(x,t)$ is a solution to the linear system \eqref{27} with initial data $W(0)\in \big( H^{s+1}(\R)\times H^{s}(\R)\times H^{s}(\R) \big) \cap \big(L^{1}(\R) \big)^{3}$ for some $s\geq 3$. Then for each fixed $0 \leq \ell \leq s$ there holds the estimate
\begin{equation}
\label{40}
\begin{aligned}
 \Big( \Vert \partial_{x}^{\ell} W_{1}(t) \Vert_{1}^{2} + \Vert \partial_{x}^{\ell} W_{2}(t) \Vert_{0}^{2} &+ \Vert \partial_{x}^{\ell} W_{3}(t) \Vert_{0}^{2} \Big)^{1/2} \leq  \\ &\leq Ce^{-c_{1}t} \left( \Vert \partial_{x}^{\ell} W_{1}(0) \Vert_{1}^{2} + \Vert \partial_{x}^{\ell} W_{2}(0) \Vert_{0}^{2}+ \Vert \partial_{x}^{\ell} W_{3}(0) \Vert_{0}^{2} \right)^{1/2} + \\ & \quad+ C\big( 1+t \big)^{-(\ell/2+1/4)} \Vert W(0) \Vert_{L^{1}},
\end{aligned}
\end{equation}
holds for all $t\geq 0$ and some uniform constants $C$, $c_{1}>0$.
\end{lemma}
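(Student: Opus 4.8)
The plan is to transfer the pointwise Fourier estimate \eqref{39} of Corollary \ref{corlindecayW} into the physical-space estimate \eqref{40} by Plancherel's theorem together with a standard low/high frequency decomposition. Introduce the weighted quantity
\[
\Theta(\xi,t) := (1+\xi^{2})\,|\hW_{1}(\xi,t)|^{2} + |\hW_{2}(\xi,t)|^{2} + |\hW_{3}(\xi,t)|^{2},
\]
so that, since $\| f \|_{1}^{2} = \int_{\R}(1+\xi^{2})|\widehat{f}(\xi)|^{2}\,d\xi$ and $\| \partial_{x}^{\ell} f \|_{0}^{2} = \int_{\R}\xi^{2\ell}|\widehat{f}(\xi)|^{2}\,d\xi$, one has for each fixed $0 \leq \ell \leq s$,
\[
\| \partial_{x}^{\ell} W_{1}(t) \|_{1}^{2} + \| \partial_{x}^{\ell} W_{2}(t) \|_{0}^{2} + \| \partial_{x}^{\ell} W_{3}(t) \|_{0}^{2} = \int_{\R} \xi^{2\ell}\,\Theta(\xi,t)\,d\xi ,
\]
and \eqref{39} reads $\Theta(\xi,t) \leq C e^{-2c_{0}\xi^{2}t}\,\Theta(\xi,0)$ for all $\xi \in \R$, $t \geq 0$. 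The proof then amounts to estimating $\int_{\R}\xi^{2\ell}\,\Theta(\xi,t)\,d\xi$ by splitting the domain into $\{|\xi|\leq 1\}$ and $\{|\xi|\geq 1\}$.

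For the low-frequency part I would use the Hausdorff--Young bound $\| \hW_{j}(\cdot,0) \|_{L^{\infty}} \leq \| W_{j}(0) \|_{L^{1}} \leq \| W(0) \|_{L^{1}}$ together with $1+\xi^{2}\leq 2$ on $|\xi|\leq 1$ to get $\Theta(\xi,0) \leq 4\| W(0) \|_{L^{1}}^{2}$; then the elementary scaling estimate $\int_{|\xi|\leq 1}\xi^{2\ell}e^{-2c_{0}\xi^{2}t}\,d\xi \leq C(1+t)^{-(\ell+1/2)}$ (change of variables $\xi \mapsto \xi/\sqrt{t}$ for $t \geq 1$, trivial bound for $t \leq 1$) yields
\[
\int_{|\xi|\leq 1}\xi^{2\ell}\,\Theta(\xi,t)\,d\xi \leq C(1+t)^{-(\ell+1/2)}\,\| W(0) \|_{L^{1}}^{2} .
\]
For the high-frequency part, $|\xi|\geq 1$ forces $e^{-2c_{0}\xi^{2}t} \leq e^{-2c_{1}t}$ with $c_{1} := c_{0}$, hence
\[
\int_{|\xi|\geq 1}\xi^{2\ell}\,\Theta(\xi,t)\,d\xi \leq C e^{-2c_{1}t}\int_{\R}\xi^{2\ell}\,\Theta(\xi,0)\,d\xi = C e^{-2c_{1}t}\big( \| \partial_{x}^{\ell} W_{1}(0) \|_{1}^{2} + \| \partial_{x}^{\ell} W_{2}(0) \|_{0}^{2} + \| \partial_{x}^{\ell} W_{3}(0) \|_{0}^{2} \big),
\]
which is finite because $W(0) \in H^{s+1}(\R)\times H^{s}(\R)\times H^{s}(\R)$ with $\ell \leq s$.

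Adding the two contributions, taking square roots and using the subadditivity $\sqrt{a+b} \leq \sqrt{a}+\sqrt{b}$ produces exactly \eqref{40}. Every step is elementary, so there is no genuine obstacle in this lemma; the only points deserving care are the rescaling argument giving the $(1+t)^{-(\ell+1/2)}$ factor in the low-frequency integral, and the uniform choice $c_{1}=c_{0}$ which makes the exponential rate of the high-frequency piece independent of $\ell$ across the whole range $0\leq\ell\leq s$.
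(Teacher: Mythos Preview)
Your proof is correct and follows essentially the same approach as the paper: multiply the pointwise Fourier estimate \eqref{39} by $\xi^{2\ell}$, split the integral at $|\xi|=1$, use the $L^{1}\to L^{\infty}$ bound on $\hW(0)$ together with the scaling estimate for $\int_{|\xi|\leq 1}\xi^{2\ell}e^{-2c_{0}\xi^{2}t}\,d\xi$ on the low frequencies, and the uniform exponential decay on the high frequencies. The only cosmetic difference is that the paper takes $c_{1}=\tfrac{1}{2}c_{0}$ (bounding $e^{-2c_{0}\xi^{2}t}\leq e^{-c_{0}t}$ before the square root) while you keep $c_{1}=c_{0}$; both are valid.
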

\begin{proof}
Fix $\ell \in [0,s]$, multiply estimate \eqref{39} by $\xi^{2\ell}$ and integrate in $\xi \in \R$. This yields,
\[
\int_\R \xi^{2\ell} \big[ (1+\xi^2) |\hW_1(\xi,t)|^2 + |\hW_2(\xi,t)|^2 + |\hW_3(\xi,t)|^2 \big] \, d\xi \leq C ( J_1(t) + J_2(t)),
\]
where 
\[
\begin{aligned}
J_1(t) &= \int_{-1}^1 \xi^{2\ell} \exp(-2 c_0 \xi^2 t) \big[ (1+\xi^2) |\hW_1(\xi,0)|^2 + |\hW_2(\xi,0)|^2 + |\hW_3(\xi,0)|^2 \big] \, d\xi,\\
J_2(t) &= \int_{\{|\xi| \geq 1\}} \xi^{2\ell} \exp(-2 c_0 \xi^2 t) \big[ (1+\xi^2) |\hW_1(\xi,0)|^2 + |\hW_2(\xi,0)|^2 + |\hW_3(\xi,0)|^2 \big] \, d\xi.
\end{aligned}
\]
Since $\exp(-2 c_0 \xi^2 t) \leq \exp(-c_0 t)$ for all $|\xi| \geq 1$ we clearly have
\[
\begin{aligned}
J_2(t) &\leq e^{-c_0 t} \int_{|\xi| \geq 1} \xi^{2\ell} \big[ (1+\xi^2) |\hW_1(\xi,0)|^2 + |\hW_2(\xi,0)|^2 + |\hW_3(\xi,0)|^2 \big] \, d\xi \\
&\leq e^{-c_0 t} \big( \| \partial_x^\ell W_1(0) \|_1^2 +  \| \partial_x^\ell W_2(0) \|_0^2 +  \| \partial_x^\ell W_3(0) \|_0^2 \big),
\end{aligned}
\]
for all $t \geq 0$. In order to estimate $J_1(t)$, we observe that for any $\ell \in [0,s]$ and any $c_0 > 0$, the integral
\[
I_0(t) := (1+t)^{\ell + 1/2} \int_{-1}^1 \xi^{2\ell} e^{-2c_0 \xi^2 t} \, d\xi,
\]
is uniformly bounded above (see Lemma A.1 in \cite{PlV22}). Therefore,
\[
\begin{aligned}
J_1(t) \leq 2 \int_{-1}^1 \xi^{2\ell} \exp (-2 c_0 \xi^2 t) |\hW(\xi,0)|^2  \,  d\xi &\leq 2 \sup_{\xi \in \R} |\hW(\xi,0)|^2 (1+t)^{-\ell-1/2} I_0(t) \\
&\leq C (1+t)^{-(\ell + 1/2)} \| W(0) \|_{L^1},
\end{aligned}
\]
for some uniform $C > 0$. Combine both estimates to obtain \eqref{40} with $c_1 := \tfrac{1}{2} c_0 > 0$.
\end{proof}

Finally, let us write equation \eqref{29} as
\[
W_t = \cA W,
\]
where the differential operator $\cA$ is given by
\begin{equation}
\label{defcalA}
\cA := (A^0)^{-1} \cL = (A^0)^{-1} \big( - A^1(\bU) \partial_x + B(\bU) \partial_x^2 + C(\bU) \partial_x^3 \big).
\end{equation}
We are interested in the semigroup generated by $\cA$, that is, in the solutions to the Cauchy problem
\begin{equation}
\label{cauchyW}
\begin{aligned}
W_t &= \cA W,\\
W(0) &= f,
\end{aligned}
\end{equation}
for some initial condition $f$ in an appropriate Banach space. Taking the Fourier transform we arrive at
\[
\hW_t + M(i\xi) \hW=0,
\]
with
\begin{equation}
\label{defsymbolM}
M(i\xi) := (A^0)^{-1} \big( i \xi A(\xi) - (i\xi)^2 B\big).
\end{equation}
Hence, the solutions to the Cauchy problem \eqref{cauchyW} can be recast in terms of the inverse Fourier transform as
\[
W(x,t) = \big( e^{t \cA} f \big)(x) = \frac{1}{\sqrt{2\pi}} \int_\R e^{i \xi x} e^{-tM(i\xi)} \widehat{f} (\xi) \, d\xi,
\]
where $\big\{ e^{t \cA} \big\}_{t \geq 0}$ denotes the semigroup generated by the linear operator $\cA$. Therefore, from Lemma \ref{lemdecayW} we immediately obtain the following decay rate of the semigroup.

\begin{corollary}\label{SemDecCor}
For any $f\in \big( H^{s+1}(\R)\times H^{s}(\R)\times H^{s}(\R) \big) \cap \big(L^{1}(\R) \big)^{3} $, $s\geq 3$, and $0 \leq \ell \leq s$, $t\geq 0$, the estimate 
\begin{equation}
\label{44}
\begin{aligned}
\Big( \Vert \partial_{x}^{\ell} \big( e^{t\cA}f \big)_{1}(t) \Vert_{1}^{2} &+ \Vert \partial_{x}^{\ell} \big( e^{t\cA}f \big)_{2}(t) \Vert_{0}^{2}+ \Vert \partial_{x}^{\ell} \big( e^{t\cA}f \big)_{3}(t) \Vert_{0}^{2} \Big)^{1/2}\leq \\ &\leq Ce^{-c_{1}t} \left( \Vert \partial_{x}^{\ell} f_{1} \Vert_{1}^{2} + \Vert \partial_{x}^{\ell} f_{2} \Vert_{0}^{2}+ \Vert \partial_{x}^{\ell} f_{3} \Vert_{0}^{2} \right)^{1/2} + C\big( 1+t \big)^{-(\ell/2+1/4)} \Vert f \Vert_{L^{1}},
\end{aligned}
\end{equation}
holds for some uniform $C$, $c_{1}>0$.
\end{corollary}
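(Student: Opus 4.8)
The plan is to derive Corollary \ref{SemDecCor} as an immediate consequence of Lemma \ref{lemdecayW}, after identifying the semigroup orbit $e^{t\cA}f$ with the solution of the linear system \eqref{27} emanating from the initial datum $f$. First I would fix $f \in \big( H^{s+1}(\R)\times H^{s}(\R)\times H^{s}(\R) \big) \cap \big(L^{1}(\R) \big)^{3}$ with $s \geq 3$ and set
\[
W(x,t) := \big( e^{t\cA} f \big)(x) = \frac{1}{\sqrt{2\pi}} \int_\R e^{i \xi x} e^{-tM(i\xi)} \widehat{f}(\xi) \, d\xi ,
\]
with $M(i\xi)$ the symbol in \eqref{defsymbolM}. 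By construction of $M(i\xi)$, the function $W$ solves the Cauchy problem \eqref{cauchyW}, hence it solves the linear Korteweg system \eqref{27} with $W(0) = f$. In particular $W(0)$ belongs to $\big( H^{s+1}(\R)\times H^{s}(\R)\times H^{s}(\R) \big) \cap \big(L^{1}(\R) \big)^{3}$, so all hypotheses of Lemma \ref{lemdecayW} are satisfied by this $W$.

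The second (and last) step is to invoke estimate \eqref{40} for this particular solution, for each fixed $\ell$ with $0 \leq \ell \leq s$. Since $W(t) = e^{t\cA}f$ and $W(0) = f$, the left-hand side norms $\| \partial_x^\ell W_j(t) \|$ of \eqref{40} are exactly the norms $\| \partial_x^\ell (e^{t\cA}f)_j(t) \|$ appearing on the left of \eqref{44}; likewise, on the right-hand side $\| \partial_x^\ell W_j(0) \|$ becomes $\| \partial_x^\ell f_j \|$ and $\| W(0) \|_{L^1}$ becomes $\| f \|_{L^1}$. Substituting these identifications into \eqref{40} yields \eqref{44} verbatim, with the same uniform constants $C, c_1 > 0$ provided by Lemma \ref{lemdecayW}.

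Since the argument is nothing more than a restatement of Lemma \ref{lemdecayW} in semigroup language, there is essentially no obstacle. The only point deserving a (routine) remark is the justification that the Fourier integral defining $e^{t\cA}f$ is indeed a classical solution of \eqref{27} lying in the appropriate function spaces: this follows from the pointwise propagator bound \eqref{39} in Fourier space, which guarantees for every $t \geq 0$ the integrability and decay needed to differentiate under the integral sign, together with the standard mapping properties of the inverse Fourier transform on the Sobolev scale. Once this is noted, the corollary is immediate.
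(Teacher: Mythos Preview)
Your proposal is correct and follows essentially the same approach as the paper, which simply states that the corollary follows immediately from Lemma \ref{lemdecayW} once one identifies $e^{t\cA}f$ with the solution $W$ of the linear system \eqref{27} having initial datum $W(0)=f$. Your additional remark about justifying the Fourier representation via the pointwise bound \eqref{39} is a reasonable elaboration, but the paper treats this as standard and does not comment on it.
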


\section{Global decay of perturbations of equilibrium states}
\label{secglobal}

In this section we obtain both the global existence and decay of perturbations of equilibrium states for the nonlinear NSFK system \eqref{NSFK} - \eqref{defKw}. The linear estimates from the previous section constitute a key ingredient along the proof. 

\subsection{Nonlinear energy estimates}
\label{secnonlinee}

Fix any constant state $\bU = (\brho, \bu, \bthe) \in \cU$ and suppose that hypotheses \hyperref[H1]{\rm{(H$_1$)}} - \hyperref[H4]{\rm{(H$_4$)}} hold. Then for initial conditions, $U (0) = U_0 = (\rho_0, u_0, \theta_0)$ such that
\[
U_0 - \bU \in \big( H^{s+1}(\R) \times H^s(\R) \times H^s(\R) \big) \cap \big( L^1(\R) \big)^3,
\]
for some $s \geq 3$, and satisfying the assumptions of Theorem \ref{themlocale}, there exists a local solution to the NSFK system \eqref{NSFK} - \eqref{defKw},
\[
U = (\rho,u,\theta) \in X_s\big((0,T); \tfrac{1}{2}m_1,2M_1,\tfrac{1}{2}m_2,2M_2\big),
\]
for some time $T > 0$ and constants $m_j, M_j$, $j = 1,2$, such that $U(\cdot, t) \in \cU$ a.e. in $x\in \R$ and for all $t \in [0,T]$. By virtue of Proposition \ref{propgoodW} and Lemma \ref{lemregW}, the perturbation variables defined in \eqref{defWvar} solve the nonlinear system \eqref{Wsystem}, which we now recast as
\begin{equation}
\label{nonlinearW}
W_t = \cA W + \partial_x N(U,U_x,U_{xx}),
\end{equation}
where $\cA$ is the operator defined in \eqref{defcalA} and the nonlinear term is given by
\[
N(U,U_x,U_{xx}) := (A^0)^{-1} \tiN(U,U_x,U_{xx}).
\]
Thanks to the structure of $A^0$ (see \eqref{barA0A1B}) and the form of $\tiN$ described in Lemma \ref{lemorderN}, we deduce that
\begin{equation}
\label{newN}
N(U,U_x,U_{xx}) = O\big(|U-\bU|^2 + |U-\bU||U_x| + |U-\bU||\rho_{xx}| + |U_x|^2 + |\rho_x||\rho_{xx}|\big) \begin{pmatrix} 0 \\ 1 \\1 
\end{pmatrix}.
\end{equation}
The initial condition for $W$ is given by
\[
W(0) = D_U f^0(\bU)^{-1} ( f^0(U_0) - f^0(\bU)) + D_U f^0(\bU)^{-1} \Gamma^0(U_0, \partial_x U_0),
\]
which, because of Lemma \ref{lemregW}, satisfies
\[
C_0^{-1}\vertiii{U_0 - \bU}_\ell \leq \vertiii{W(0)}_\ell \leq C_0 \vertiii{U_0 - \bU}_\ell,
\]
for each $0 \leq \ell \leq s$ (the $\vertiii{\cdot}_\ell$ norm is defined in \eqref{triplenorm}). In particular, for $\ell = s$ we have
\[
C_0^{-1} E_s(0)^{1/2} \leq \overline{E}_s(0)^{1/2} \leq C_0 E_s(0)^{1/2},
\]
where
\[
\overline{E}_s(t) := \sup_{\tau \in [0,t]} \vertiii{W(\tau)}_s^2,
\]
and $E_s(t) = \sup_{\tau \in [0,t]} \vertiii{U(\tau) - \bU}_s^2$ is defined in \eqref{defEs}. Hence, the solutions to \eqref{nonlinearW} with initial condition $W(0)$ can be expressed via the variation of constants formula,
\[
W(x,t) = e^{t \cA} W(0) + \int_0^t e^{(t - \tau) \cA} \partial_x (N(U,U_x,U_{xx})(\tau)) \, d \tau.
\]

Fix $0 \leq \ell \leq s-1$. Upon application of the semigroup decay estimates from Corollary \ref{SemDecCor} we obtain
\begin{equation}
\label{50}
\begin{aligned}
\Big( \Vert \partial_{x}^{\ell} W_{1}(t) \Vert_{1}^{2} + \Vert \partial_{x}^{\ell} W_{2}(t) \Vert_{0}^{2} &+ \Vert \partial_{x}^{\ell} W_{3}(t) \Vert_{0}^{2} \Big)^{1/2} \leq Ce^{-c_{1}t} \Big( \Vert \partial_{x}^{\ell} W_{1}(0) \Vert_{1}^{2} + \Vert \partial_{x}^{\ell} W_{2}(0) \Vert_{0}^{2}+ \Vert \partial_{x}^{\ell} W_{3}(0) \Vert_{0}^{2} \Big)^{1/2} + \\ 
& \quad+ C( 1+t )^{-(\ell/2+1/4)} \Vert W(0) \Vert_{L^{1}} + \\ 
& \quad+ C\int_{0}^{t} \left( \Vert \partial_{x}^{\ell} \big(e^{-(t-\tau)\cA}\partial_x N\big)_{1}(\tau) \Vert_{1}^{2} + \Vert \partial_{x}^{\ell} \big(e^{-(t-\tau)\cA}\partial_x N\big)_{2}(\tau) \Vert_{0}^{2}+ \right. \\ 
& \qquad \qquad  \left.+ \Vert \partial_{x}^{\ell} (e^{-(t-\tau)\cA}\partial_x N)_{3}(\tau) \Vert_{0}^{2} \right)^{1/2} \: d\tau,
\end{aligned}
\end{equation}
where we have surpressed the dependence on $U$, $U_x$ and $U_{xx}$ of the nonlinear terms for shortness in the notation. Let us now apply the identity
\[
\partial_x^{\ell} \big( e^{t \cA} \partial_x f \big) = \partial_x^{\ell+1} \big( e^{t \cA} f\big),
\]
for any $f\in H^{s}(\R)$ and any $0 \leq \ell \leq s-1$, which can be easily verified using Fourier transform and the symbol for the semigroup, $\hW = e^{-t M(i\xi)} \hW(0)$, in \eqref{defsymbolM}. Using this identity and, for any fixed $0 \leq \ell \leq s-1$, the estimates in \eqref{44} with $\ell +1\leq s$ replacing $\ell$, we obtain
\begin{equation}
\label{51}
\begin{aligned}
\int_{0}^{t} &\left[ \Vert \partial_{x}^{\ell} \big(e^{-(t-\tau)\cA}\partial_x N\big)_{1}(\tau) \Vert_{1}^{2} + \Vert \partial_{x}^{\ell} \big(e^{-(t-\tau)\cA}\partial_x N\big)_{2}(\tau) \Vert_{0}^{2} + \Vert \partial_{x}^{\ell} (e^{-(t-\tau)\cA}\partial_x N)_{3}(\tau) \Vert_{0}^{2} \right]^{1/2} \! d\tau \\
&\leq C \int_{0}^{t} e^{-c_1(t-\tau)} \Big[ \| \partial_x^{\ell+1} N_1(\tau) \|_1^2 + \| \partial_x^{\ell+1} N_2(\tau) \|_0^2 + \| \partial_x^{\ell+1} N_3(\tau) \|_0^2 \Big] \, d\tau +\\
&\qquad + C \int_0^t (1+t-\tau)^{-(\ell/2 + 3/4)} \| N(\tau) \|_{L^1} \, d\tau.
\end{aligned}
\end{equation}
Combine \eqref{50} and \eqref{51} to arrive at
\begin{equation}
\label{52}
\begin{aligned}
\Big( \Vert \partial_{x}^{\ell} W_{1}(t) \Vert_{1}^{2} + \Vert \partial_{x}^{\ell} W_{2}(t) \Vert_{0}^{2} &+ \Vert \partial_{x}^{\ell} W_{3}(t) \Vert_{0}^{2} \Big)^{1/2} \leq \\
&\leq Ce^{-c_{1}t} \Big( \Vert \partial_{x}^{\ell} W_{1}(0) \Vert_{1}^{2} + \Vert \partial_{x}^{\ell} W_{2}(0) \Vert_{0}^{2}+ \Vert \partial_{x}^{\ell} W_{3}(0) \Vert_{0}^{2} \Big)^{1/2} + \\ 
& \quad+ C( 1+t )^{-(\ell/2+1/4)} \Vert W(0) \Vert_{L^{1}} + \\ 
& \quad + C \int_{0}^{t} e^{-c_1(t-\tau)} \Big[ \| \partial_x^{\ell+1} N_1(\tau) \|_1^2 + \| \partial_x^{\ell+1} N_2(\tau) \|_0^2 + \| \partial_x^{\ell+1} N_3(\tau) \|_0^2 \Big] \, d\tau +\\
&\quad + C \int_0^t (1+t-\tau)^{-(\ell/2 + 3/4)} \| N(\tau) \|_{L^1} \, d\tau.
\end{aligned}
\end{equation}
Therefore, recalling that $N_1 \equiv 0$, summing up estimates \eqref{52} for $\ell = 0, 1, \ldots, s-1$, we obtain
\begin{equation}
\label{53}
\begin{aligned}
\vertiii{W(t)}_{s-1} &\leq C(1+t)^{-1/4} \big( \vertiii{W(0)}_{s-1} + \| W(0) \|_{L^1} \big) + C \int_0^t (1+t-\tau )^{-3/4} \| N(\tau) \|_{L^1} \, d \tau +\\
&\qquad + C \int_0^t e^{-c_1 (t - \tau)} \big( \| N_2(\tau) \|_s^2 + \| N_3(\tau) \|_s^2 \big)^{1/2}\, d\tau.
\end{aligned}
\end{equation}

Let us ow estimate the Sobolev norms of the nonlinear terms $N$ that appear in \eqref{53}. From \eqref{newN} we know that
\[
N_i (U,U_x,U_{xx}) = O\big(|U-\bU|^2 + |U-\bU||U_x| + |U-\bU||\rho_{xx}| + |U_x|^2 + |\rho_x||\rho_{xx}|\big), \qquad i = 2,3.
\]
Since $s \geq 3$, use the Banach algebra properties and the Sobolev calculus inequalities of Lemmata \ref{lemaux1}
and \ref{lemaux2} to obtain the following estimates (details are omitted):
\[
\Vert |U-\bU|^{2} \Vert_{s} \leq C \Vert U -\bU \Vert_{s}\Vert U- \bU \Vert_{1}, 
\]
\[
\begin{aligned}
\Vert |U- \bU|| U_{x} |\Vert_{s} & \leq C \big(  \Vert U- \bU \Vert_{s}\Vert U_{x} \Vert_{L^{\infty}} + \Vert U_{x} \Vert_{s} \Vert U- \bU \Vert_{L^{\infty}} \big) \\ & \leq C\big( \Vert U -\bU \Vert_{s} \Vert U-\bU \Vert_{2} + \Vert U_{x} \Vert_{s} \Vert U- \bU \Vert_{1} \big),
\end{aligned} 
\]
\[ 
\begin{aligned}
\Vert |U -\bU || \rho_{xx}| \Vert_{s} & \leq C \big( \Vert U- \bU \Vert_{s} \Vert \rho_{xx} \Vert_{L^{\infty}} + \Vert \rho_{xx} \Vert_{s} \Vert U- \bU \Vert_{L^{\infty}}   \big) \\ &\leq C \big( \Vert U- \bU \Vert_{s}\Vert \rho - \brho \Vert_{3} +   \Vert \rho_{x} \Vert_{s+1} \Vert U- \bU \Vert_{1} \big),
\end{aligned} 
\]
\[
\Vert \vert U_{x} \vert^{2}  \Vert \leq C \Vert U_x \Vert_{s} \Vert U_{x} \Vert_{L^{\infty}} \leq C  \Vert U_{x} \Vert_{s} \Vert U- \bU \Vert_{2},
\]
and,
\[
\begin{aligned}
\Vert |\rho_{x}|| \rho_{xx}| \Vert_{s} &\leq C \big( \Vert (\rho - \brho)_{x} \Vert_{s} \Vert \rho_{xx} \Vert_{L^{\infty}} + \Vert \rho_{xx} \Vert_{s} \Vert (\rho -\brho)_{x} \Vert_{L^{\infty}} \big) \\ &\leq C \big( \Vert \rho -\brho \Vert_{s+1} \Vert \rho- \brho \Vert_{3} + \Vert \rho_{x} \Vert_{s+1}\Vert \rho -\brho \Vert_{2} \big).
\end{aligned}
\]
Therefore, we arrive at
\begin{equation}
\label{54}
\begin{aligned}
\| N_i(\tau) \|_s &\leq C \big( \vertiii{U(\tau) - \bU}_s \vertiii{U(\tau) - \bU}_{s-1} + \vertiii{U(\tau) - \bU}_{s-1}  \| U_x(\tau) \|_s + \\
&\qquad \quad + \vertiii{U(\tau) - \bU}_{s-1} \| \rho_x(\tau) \|_{s+1}\big),
\end{aligned}
\end{equation}
for any $\tau \in [0,T]$, $i = 2,3$. Also, from \eqref{newN} we clearly have
\begin{equation}
\label{55}
\begin{aligned}
\| N_i(\tau) \|_{L^1} \leq C \| U(\tau) - \bU \|_2^2 &\leq C \| U(\tau) - \bU \|_s^2 \leq C \vertiii{U(\tau) - \bU}_{s-1}^2,
\end{aligned}
\end{equation}
for $i = 2,3$, because $s \geq 3$. As a result, we can substitute estimates \eqref{54} and \eqref{55} into \eqref{53} to obtain
\begin{equation}
\label{56}
\begin{aligned}
\vertiii{W(t)}_{s-1} &\leq C(1+t)^{-1/4} \big( \vertiii{W(0)}_{s-1} + \| W(0) \|_{L^1} \big) + \\
&+ C \int_0^t (1+t-\tau )^{-3/4} \vertiii{U(\tau) - \bU}_{s-1}^2 \, d \tau +\\
&+ C \int_0^t e^{-c_1 (t - \tau)} \Big[  \vertiii{U(\tau) - \bU}_s \vertiii{U(\tau) - \bU}_{s-1} + \vertiii{U(\tau) - \bU}_{s-1} \| U_x(\tau) \|_s + \\
&\qquad \qquad \qquad \qquad + \vertiii{U(\tau) - \bU}_{s-1} \| \rho_x(\tau) \|_{s+1}\Big] \, d\tau.
\end{aligned}
\end{equation}

Let us first estimate the last integral of the right hand side of \eqref{56}. Since clearly,
\[
\| U_x(\tau) \|_s +  \| \rho_x(\tau) \|_{s+1} \leq C \big( \| \rho_x(\tau) \|_{s+1}^2 + \| u_x(\tau) \|_s^2 + \| \theta_x(\tau) \|_s^2 \big)^{1/2}, 
\]
for all $\tau \in (0,T)$, we obtain,
\begin{align}
\int_0^t e^{-c_1 (t - \tau)} &\Big[  \vertiii{U(\tau) - \bU}_s \vertiii{U(\tau) - \bU}_{s-1} + \vertiii{U(\tau) - \bU}_{s-1}  \| U_x(\tau) \|_s + \nonumber\\
&+\vertiii{U(\tau) - \bU}_{s-1} \| \rho_x(\tau) \|_{s+1}\Big] \, d\tau \nonumber\\
&\leq C \sup_{0 \leq \tau \leq t} \vertiii{U(\tau)-\bU}_{s} \int_0^t e^{-c_1(t-\tau)} \vertiii{U(\tau) - \bU}_{s-1} \, d\tau + \nonumber\\
& + C \int_0^t e^{-c_1 (t - \tau)} \big( \| \rho_x(\tau) \|_{s+1}^2 + \| u_x(\tau) \|_s^2 + \| \theta_x(\tau) \|_s^2 \big)^{1/2} \vertiii{U(\tau) - \bU}_{s-1} \, d\tau \nonumber\\
&\leq C \sup_{0 \leq \tau \leq t} \vertiii{U(\tau)-\bU}_{s} \int_0^t e^{-c_1(t-\tau)} \vertiii{U(\tau) - \bU}_{s-1} \, d\tau + \nonumber\\
&+ C \left( \int_0^t \| \rho_x(\tau) \|_{s+1}^2 + \| u_x(\tau) \|_s^2 + \| \theta_x(\tau) \|_s^2 \, d\tau \right)^{1/2} \left( \int_0^t e^{-2c_1 (t - \tau)}  \vertiii{U(\tau) - \bU}_{s-1}^2 \, d\tau \right)^{1/2}.\label{57}
\end{align}
From the equivalence of the norms of $W$ and $U - \bU$ expressed in Lemma \ref{lemregW} (see estimates \eqref{equivl} and \eqref{L1estUW}) we have,
\[
\begin{aligned}
\vertiii{W(0)}_{s-1} + \| W(0) \|_{L^1} &\leq C \big( \vertiii{U(0) - \bU}_{s-1} +  \| \rho_0 - \brho \|_1^{2} + \| U(0) - \bU \|_{L^1} \big),
\end{aligned}
\]
and,
\[
\vertiii{U(t)-\bU}_{s-1} \leq C \vertiii{W(t)}_{s-1}, \qquad \qquad \forall \; t \in [0,T],
\]
for some uniform constant $C > 0$. Thus, upon substitution of these inequalities and estimate \eqref{57} into \eqref{56}, we arrive at
\begin{align}
\vertiii{U(t) - \bU}_{s-1} &\leq C(1+t)^{-1/4} \big( \vertiii{U(0) - \bU}_{s-1} + \| \rho_0 - \brho \|_1^{2} + \| U(0) - \bU \|_{L^1} \big) + \nonumber\\
&+  C \sup_{0 \leq \tau \leq t} \vertiii{U(\tau)-\bU}_{s} \int_0^t e^{-c_1(t-\tau)} \vertiii{U(\tau) - \bU}_{s-1} \, d\tau +  \nonumber \\
&+ C \left( \int_0^t \| \rho_x(\tau) \|_{s+1}^2 + \| u_x(\tau) \|_s^2 + \| \theta_x(\tau) \|_s^2 \, d\tau \right)^{1/2} \!\!\!\left( \int_0^t e^{-c_1 (t - \tau)}  \vertiii{U(\tau) - \bU}_{s-1}^2 \, d\tau \right)^{1/2} + \nonumber\\
&+ C \int_0^t (1+t-\tau )^{-3/4} \vertiii{U(\tau) - \bU}_{s-1}^2 \, d \tau. \label{58}
\end{align}
In order to simplify the notation, let us define
\begin{equation}
\label{defGs}
G_s(t) := \sup_{0 \leq \tau \leq t} (1+ \tau)^{1/4} \vertiii{U(\tau)-\bU}_{s-1}, \qquad t \in [0,T].
\end{equation}
Therefore, from estimate \eqref{58} and recalling the definitions of $E_s(t)$ and of $F_s(t)$ in \eqref{defEs} and \eqref{defFs}, respectively, we obtain
\[
G_s(t) \leq C R_s(0) + C I_1(t) (E_s(t) + F_s(t))^{1/2} G_s(t) + C I_2(t) G_s(t)^2,
\]
where
\begin{equation}
\label{defRs0}
R_s(0) := \vertiii{U(0) - \bU}_{s-1} + \| \rho_0 - \brho \|_1^{2} + \| U(0) - \bU \|_{L^1},
\end{equation}
\[
\begin{aligned}
I_{1}(t) &:= \sup_{0 \leq \tau \leq t}(1 + \tau)^{1/4} \int_{0}^{\tau} e^{-c_{1}(\tau-z)}(1+z)^{-1/4} \: dz + \\ &\quad  + \sup_{0 \leq \tau \leq t} (1+\tau)^{1/4} \left[ \int_{0}^{\tau}e^{-2c_{1}(\tau-z)}(1+z)^{-1/2} \: dz \right]^{1/2},\\
I_{2}(t) &:= \sup_{0 \leq \tau \leq t} (1+\tau)^{1/4} \int_{0}^{\tau}(1+\tau -z)^{-3/4}(1+z)^{-1/2} \: dz. 
\end{aligned}
\]
One can prove that $I_1(t)$ and $I_2(t)$ are uniformly bounded in $t \geq 0$ (see Lemma A.1 in \cite{PlV22}). Therefore, we arrive at the estimate
\begin{equation}
\label{60}
G_s(t) \leq \cC R_s(0) + \cC (E_s(t) + F_s(t))^{1/2} G_s(t) + \cC G_s(t)^2,
\end{equation}
for all $t \in [0,T]$ and with a uniform constant $\cC > 0$ depending, at most, on $a_0$ and on $\bU$. 

\subsection{Global existence and decay}

We are now ready to prove our main result.

\begin{theorem}[global existence and time asymptotic decay]
\label{thmgloex} 
Under hypotheses \hyperref[H1]{\rm{(H$_1$)}} - \hyperref[H4]{\rm{(H$_4$)}}, suppose that $U(0)-\bU \in \left( H^{s+1}(\R) \times H^{s}(\R) \times H^{s}(\R) \right) \cap \big( L^{1}(\R) \big)^{3}$ for $s\geq 3$. Then there exists a positive constant $\ep$ ($< a_{0}$, with $a_{0} > 0$ as in Theorem \ref{themlocale}) such that if
\begin{equation}
\label{lessep}
R_{s}(0) = \vertiii{U(0) - \bU}_s + \| \rho_0 - \brho \|_1^{2}+ \| U(0) - \bU \|_{L^1} < \ep,
\end{equation}
then the Cauchy problem for system \eqref{NSFK} - \eqref{defKw} with initial condition $U(0)$ has a unique global solution $U(x,t)= (\rho, u, \theta)(x,t)$ satisfying 
\[
\begin{split}
& \:\: \rho-\brho \in C\left((0,\infty);H^{s+1}(\mathbb{R})) \cap C^{1}((0,\infty); H^{s-1}(\mathbb{R})\right), \\ & \:\: u-\bu, \theta- \bthe \in C\left((0,\infty); H^{s}(\mathbb{R})) \cap C^{1}((0,\infty);H^{s-2}(\mathbb{R})\right), \\ & \:\:(\rho_{x}, u_{x}, \theta_{x})\in L^{2}\left((0,\infty);H^{s+1}(\mathbb{R}) \times H^{s}(\mathbb{R}) \times H^{s}(\mathbb{R})\right).
\end{split}
\]
Moreover, the solution satisfies the estimates
\begin{equation}
\label{eee3}
\big(E_{s}(t) + F_{s}(t) \big)^{1/2} \leq C_2 \vertiii{U(0) - \bU}_s,
\end{equation}
and
\begin{equation}
\label{eee4}
\vertiii{U(t)-\bU}_{s-1} \leq C_3 (1+t)^{-1/4} \big( \vertiii{U(0) - \bU}_{s-1} + \| \rho_0 - \brho \|_1^{2} + \| U(0) - \bU \|_{L^1} \big),
\end{equation}
for all $t \in [0, \infty)$ and some uniform constants $C_j > 0$, $j = 2,3$.
\end{theorem}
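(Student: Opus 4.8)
The proof runs by a standard continuation (bootstrap) argument that glues together the local well-posedness theory with two global-in-time a priori bounds: the intrinsic energy estimate of Corollary~\ref{cor26}, which controls $E_s(t)+F_s(t)$, and the decay estimate \eqref{60}, which controls the weighted quantity $G_s(t)$ defined in \eqref{defGs}. First I would invoke Theorem~\ref{themlocale}: since the smallness hypothesis gives $E_s(0)^{1/2}=\vertiii{U(0)-\bU}_s\le R_s(0)<\ep$, choosing $\ep<a_0$ produces a local solution $U=(\rho,u,\theta)\in X_s\big((0,T_1);\tfrac{1}{2}m_1,2M_1,\tfrac{1}{2}m_2,2M_2\big)$ with $U(\cdot,t)\in\cU$ for all $t\in[0,T_1]$ (Remark~\ref{remLinftybound}). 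By Proposition~\ref{propgoodW} and Lemma~\ref{lemregW}, on this interval the variable $W$ is well defined, solves \eqref{nonlinearW}, and is equivalent to $U-\bU$ in the norms $\vertiii{\cdot}_\ell$, $0\le\ell\le s$.

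\textbf{The two bootstrapped quantities.} I would set
\[
T^\ast:=\sup\big\{\,T>0\ :\ \text{the solution exists on }[0,T],\ U(\cdot,t)\in\cU\text{ and }E_s(t)^{1/2}\le a_2\ \text{for all }t\in[0,T]\,\big\},
\]
with $a_2\in(0,a_0]$ as in Corollary~\ref{cor26}; by local existence $T^\ast>0$. On $[0,T^\ast)$ Corollary~\ref{cor26} gives $(E_s(t)+F_s(t))^{1/2}\le C_2E_s(0)^{1/2}\le C_2R_s(0)$. Feeding this into \eqref{60} yields $G_s(t)\le\cC R_s(0)+\cC C_2R_s(0)\,G_s(t)+\cC G_s(t)^2$ for all $t\in[0,T^\ast)$. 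Since $t\mapsto G_s(t)$ is continuous and nondecreasing with $G_s(0)=\vertiii{U(0)-\bU}_{s-1}\le R_s(0)$, a routine continuity argument---for $\ep$ small enough, depending only on $\cC$ and $C_2$---upgrades this to the uniform bound $G_s(t)\le 2\cC R_s(0)$ on $[0,T^\ast)$.

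\textbf{Closing the continuation and extracting the estimates.} Shrinking $\ep$ so that in addition $C_2\ep<a_2$, the a priori bound above forces $E_s(t)^{1/2}\le C_2R_s(0)\le C_2\ep<a_2$ \emph{strictly} on $[0,T^\ast)$; combined with the uniform bound on $G_s$ and with Remark~\ref{remLinftybound} (which keeps $U(\cdot,t)$ inside $\cU$), the standard open--closed argument applies: re-apply Theorem~\ref{themlocale} at $t=T^\ast$ to data of size $\le C_2\ep<a_0$ and extend past $T^\ast$, contradicting maximality unless $T^\ast=\infty$. Hence the solution is global; patching the $X_s$-pieces and using uniqueness gives the stated time-continuity into $H^{s+1},H^{s-1},H^s,\dots$ Estimate \eqref{eee3} is then the global version of Corollary~\ref{cor26} with $E_s(0)^{1/2}=\vertiii{U(0)-\bU}_s$, and \eqref{eee4} follows by unwinding \eqref{defGs}: $\vertiii{U(t)-\bU}_{s-1}\le(1+t)^{-1/4}G_s(t)\le 2\cC(1+t)^{-1/4}R_s(0)$.

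\textbf{Main obstacle.} All the hard analysis (semigroup decay, nonlinear estimates, equivalence of $W$ and $U-\bU$) is already packaged into Corollary~\ref{SemDecCor}, Lemma~\ref{lemregW} and estimate \eqref{60}, so the remaining difficulty is purely structural: \eqref{60} carries the factor $(E_s(t)+F_s(t))^{1/2}$ on its right-hand side, which cannot be absorbed by $G_s$ alone and must be controlled by the \emph{independent} energy estimate of Corollary~\ref{cor26}; the two bootstraps therefore have to be run in tandem, with $\ep$ fixed so that both close while simultaneously preserving $E_s(t)^{1/2}<a_2$ (needed to legitimize Corollary~\ref{cor26}) and $U(\cdot,t)\in\cU$ (needed for the change of variables to be invertible). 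Checking the time-continuity of $t\mapsto G_s(t)$ and the mutual consistency of the chain of constant choices is the only genuinely delicate point.
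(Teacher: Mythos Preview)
Your proposal is correct and follows essentially the same approach as the paper: both combine the intrinsic energy estimate of Corollary~\ref{cor26} with the decay inequality \eqref{60} via a bootstrap on $G_s(t)$, closing the continuation by keeping $E_s(t)^{1/2}$ strictly below $a_2$. The only cosmetic difference is that you phrase the continuation through a maximal time $T^\ast$ and an open--closed argument, whereas the paper iterates the local existence theorem on successive intervals $[kT_1,(k+1)T_1]$ and checks at each step that the data stays small enough; these are equivalent formulations of the same argument.
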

\begin{proof}
Let $a_0 > 0$ and $0 < a_2 \leq a_0$ be the fixed constants from Theorem \ref{themlocale} and Corollary \ref{cor26}. Under hypotheses \hyperref[H1]{\rm{(H$_1$)}} - \hyperref[H4]{\rm{(H$_4$)}}, suppose that the initial condition satisfies
\[
U(0) - \bU = (\rho_0 - \brho, u_0 - \bu, \theta_0 - \bthe) \in \big( H^{s+1}(\R) \times H^{s}(\R) \times H^{s}(\R) \big) \cap (L^1(\R))^3,
\]
with $s \geq 3$. Let us assume that $U = (\rho, u, \theta) \in X_s\big((0,T); \tfrac{1}{2}m_1,2M_1,\tfrac{1}{2}m_2,2M_2\big)$ is a local solution to the Cauchy problem, for some $T>0$, such that $( E_{s}(T)+ F_{s}(T) )^{1/2}\leq a_{2}$. As for all $t \in [0,T]$ we have
\[
E_{s}(t)^{1/2} \leq (E_{s}(t)+F_{s}(t) )^{1/2} \leq (E_{s}(T) + F_{s}(T))^{1/2}\leq a_{2},
\]
from Corollary \ref{cor26} we obtain
\begin{equation}\label{estC2}
(E_{s}(t)+F_{s}(t) )^{1/2} \leq C_{2}E_{0}(s)^{1/2},
\end{equation}
for $0 \leq t \leq T$, with $C_{2}=C_{2}(a_{2})$ independent of $t$.

Let us observe that $E_{s}(0)^{1/2}= (E_{s}(0)+ F_{s}(0))^{1/2}\leq ( E_{s}(T)+ F_{s}(T) )^{1/2}\leq a_{2} \leq a_{0}$. Now, recalling Remark \ref{remLinftybound}, we know that $a_0 > 0$ can be chosen and fixed such that the local solution satisfies $U \in \cU$ for all $t \in [0, T]$. Consequently, the estimates established in Section \ref{secnonlinee} hold. Since for all $t \in [0,T]$ we have 
\[
(E_s(t) + F_s(t))^{1/2} \leq (E_s(T) + F_s(T))^{1/2} \leq C_2 E_s(0)^{1/2} \leq C_2 R_{s}(0) \leq C_{2} \ep_{1},
\]
for $R_{s}(0)\leq \ep_{1}$. Then, upon substitution into estimate \eqref{60}, we arrive at
\[
(1 - \cC C_2 \ep_{1}) G_s(t) \leq \cC R_s(0) + \cC G_s(t)^2.
\]
Taking $0 < \ep_{1} \ll 1$ small enough we obtain
\begin{equation}\label{Gexp}
G_s(t) \leq 2 \cC R_s(0) + 2 \cC G_s(t)^2
\end{equation}
for all $t \in [0,T]$. This estimate implies, in turn, that
\[
G_s(t) \leq C_{3} R_s(0),
\]
for all $t \in [0,T]$ and some positive constant $C_{3}\geq 1$, provided $R_{s}(0)\leq \ep_{1}$, with $0 < \ep_{1} \ll 1$ sufficiently small. Indeed, by defining $x=x(t):=G_{s}(t)$ for $t\in [0, T]$, we see that inequality \eqref{Gexp} means that the parabola defined by $y=2 \cC R_{s}(0)+ 2 \cC x^{2}$ is always above the line $y=x$. Then by taking $R_{s}(0)\leq \ep_{1}$, $0< \ep_{1} \ll 1 $ small enough, there exists positive constants $1\leq C_{3} < C_{4}$  such that estimate \eqref{Gexp} implies that for each $t\in [0,T]$ exactly one of the following is satisfied:
\begin{equation}\label{Dec1}
G_{s}(t) \leq C_{3} R_{s}(0), 
\end{equation}
or
\begin{equation}\label{Dec'1}
C_{4}R_{s}(0) \leq G_{s}(t).
\end{equation}
We are going to show that \eqref{Dec1} is satisfied for all $t\in [0, T]$. Given that for $t\in [0, T]$
\[
G_{s}(t):=  \sup_{0 \leq \tau \leq t} (1+ \tau)^{1/4} \vertiii{U(\tau)-\bU}_{s-1},
\]
where the triple norm is defined in \eqref{triplenorm}, and
\[
U-\bU = (\rho -\brho, u- \bu, \theta- \bthe) \in  C((0,T); H^{s+1}(\R) \times H^{s}(\R) \times H^{s}(\R)),
\]
we see that $G_{s}(t)$ is continuous in $t$, Therefore by the fact that $C_{3} < C_{4}$, a continuity argument shows that whenever \eqref{Dec1} or \eqref{Dec'1} is satisfied for some $t\in [0, T]$, then it is satisfied for all $t$ in that interval. Let us observe that
\[
G_{s}(0) = E_{s}(0)^{1/2} \leq R_{s}(0) \leq C_{3}R_{s}(0),
\]
whence we conclude estimate \eqref{Dec1} is satisfied for all $t\in [0, T]$. Therefore, for any $0 \leq t \leq T$ there holds
\[
(1+t)^{1/4} \vertiii{U(t) -\bU}_{s-1} \leq G_s(t) \leq C_{3}  R_s(0),
\]
or, equivalently,
\begin{equation}
\label{estC3}
\vertiii{U(t) -\bU}_{s-1} \leq C_3 (1+t)^{-1/4} \big( \vertiii{U(0) - \bU}_{s-1} +  \| \rho_0 - \brho \|_1^{2} + \| U(0) - \bU \|_{L^1} \big),
\end{equation}
for all $t \in [0,T]$ with some uniform $C_3 > 0$ and by virtue of \eqref{defRs0}.

Hence, we have shown that if $U = (\rho, u, \theta) \in X_s\big((0,T); \tfrac{1}{2}m_1,2M_1,\tfrac{1}{2}m_2,2M_2\big)$ is a local solution for the initial value problem, for some $T>0$, then for $a_{2} \leq a_{0}$, we can choose $\ep_{1}=\ep_{1}(a_{2}, \cC)$ such that whenever $(E_{s}(T)+F_{s}(T))^{1/2}\leq a_{2}$ and $R_{s}(0)\leq \ep_{1}$ hold, the we have estimates \eqref{estC2} and \eqref{estC3} for all  $t\in [0, T]$.

Let us assume that $R_{s}(0)\leq \ep \leq \ep_{1}$, for $\ep$ small enough. Then, clearly,
\begin{equation}
\label{estEs0}
E_s(0)^{1/2} = \vertiii{U(0) - \bU}_s \leq R_{s}(0) \leq \ep \leq \ep_{1} \leq a_{2} \leq a_{0}.
\end{equation}
Hence, by the local existence theorem \ref{themlocale} there exist $T_1 = T_1(a_0) > 0$ and a local solution $U = (\rho, u, \theta) \in X_s\big((0,T_1); \tfrac{1}{2}m_1,2M_1,\tfrac{1}{2}m_2,2M_2\big)$ to the Cauchy problem which, by virtue of estimate \eqref{localEE}, satisfies
\begin{equation}
\label{EsT1FsT1}
E_s(T_1) + F_s(T_1) \leq C_1 E_s(0) \leq C_1 \ep^2 \leq a_2^{2},
\end{equation}
provided that $0 < \ep \ll 1$ is sufficiently small, where $C_1 = C_1(a_0) > 0$ is the uniform constant of Theorem \ref{themlocale}.\\
Therefore, as $R_{s}(0)\leq \ep \leq \ep_{1}$ and $(E_s(T_1) + F_s(T_1))^{1/2} \leq a_2$, by the preceding discussion, we obtain estimated \eqref{estC2} and \eqref{estC3} for all $t\in[0, T_{1}]$.

Now, we observe that if $\ep > 0$ is sufficiently small, then
\[
\vertiii{U(T_1) - \bU}_s \leq (E_s(T_1) + F_s(T_1))^{1/2} \leq C_1^{1/2} E_s(0)^{1/2} \leq C_1^{1/2}R_{s}(0) \leq C_{1}^{1/2}\ep \leq a_{2} \leq a_{0}.
\]
This means that we may take $U(T_1)$ as initial condition and apply the local existence theorem \ref{themlocale} once again in order to obtain a solution in the time interval $[T_1, 2T_1]$. This solution satisfies the corresponding estimate \eqref{localEE}, which now reads
\[
\sup_{T_1 \leq \tau \leq 2T_1} \vertiii{U(\tau) -\bU}_s^2 + \int_{T_1}^{2T_1} \vertiii{U_x(\tau)}_s^2 \, d \tau \leq C_{1} \vertiii{U(T_1) - \bU}_s^{2} \leq  C_1 E_s(T_1).
\]
Therefore, by taking into account estimate \eqref{estC2} for $t\in [0, T_{1}]$, we deduce the estimate
\[
\begin{aligned}
E_s(2T_1) + F_s(2T_1) &= \sup_{0 \leq \tau \leq 2T_1} \vertiii{U(\tau) -\bU}_s^2 + \int_{0}^{2T_1} \vertiii{U_x(\tau)}_s^2 \, d \tau \\
&\leq \sup_{0 \leq \tau \leq T_1} \vertiii{U(\tau) -\bU}_s^2 + \int_{0}^{T_1} \vertiii{U_x(\tau)}_s^2 \, d \tau + \\
& \quad + \sup_{T_1 \leq \tau \leq 2T_1} \vertiii{U(\tau) -\bU}_s^2 + \int_{T_1}^{2T_1} \vertiii{U_x(\tau)}_s^2 \, d \tau \\
&\leq E_s(T_1) +F_s(T_1) + C_1 E_s(T_1)\\
& \leq (1+C_{1})(E_s(T_1) +F_s(T_1))  \\
& \leq (1+C_{1})C_{2}^{2}E_{s}(0) \\
& \leq (1 + C_1) C_2^{2}R_{s}(0)^{2} \leq (1 + C_1) C_2^{2}\ep^{2} \leq a_2^{2},
\end{aligned}
\]
provided that, once again, $0 < \ep \ll 1$ is small enough. This last estimate and the condition $R_{s}(0)\leq \ep \leq \ep_{1}$ allows to obtain estimates \eqref{estC2} and \eqref{estC3} for all $t\in [0,2T_{1}]$, with the same constants $C_{2}$ and $C_{3}$.

By an iteration process and repeating the argument in the intervals $[2T_1, 3T_1]$, $[3T_1, 4T_1]$ etc., we obtain local solutions on every interval of the form $[kT_1, (k+1)T_1]$, $k \in \N$, satisfying estimates \eqref{estC2} and \eqref{estC3}. Hence, we conclude that we can find a sufficiently small $0 < \ep \ll 1$ such that if the initial condition satisfies \eqref{lessep} then the solution can be extended globally in time. (Actually, from the above estimates it is clear that it suffices to choose
\begin{equation}
\label{chooseep}
0 < \ep = \min \left\{ \ep_{1}, a_{2}, \frac{a_{2}}{C_{1}^{1/2}}, \frac{a_{2}}{(1+C_{1})^{1/2}C_{2}} \right\},
\end{equation}
where all the constants involved in \eqref{chooseep} depend upon the fixed constants $a_0$ and $a_2$, insamuch as $C_1 = C_1(a_0)$, $C_2 = C_2(a_2)$, and $\ep_{1}= \ep_{1}(a_{2}, \cC)$, $\cC = \cC(a_0,\bU)$ being the constant of estimate \eqref{60}, and, hence, they are all uniform.) Moreover estimate \eqref{eee3} and the asymptotic decay rate in time \eqref{eee4} hold for all $t \in (0, \infty)$. The theorem is now proved.
\end{proof}

\section*{Acknowledgements}

The authors are grateful to Prof. H. Hattori for providing a copy of the paper \cite{HaLi96b}. R. G. Plaza thanks Prof. I. Santamar\'{\i}a-Holek for some interesting discussions on non-equilibrium thermodynamics. The work of J. M. Valdovinos was partially supported by CONACyT (Mexico), through a scholarship for graduate studies, grant no. 712874. The work of R. G. Plaza was partially supported by DGAPA-UNAM, project PAPIIT IN-104922.

\appendix
\section{Convex extension and symmetrization of the Navier-Stokes-Fourier model}
\label{ConvExtN-S}

In this section we review the notion of a convex extension for hyperbolic-parabolic systems of conservations laws (cf. \cite{KaTh83,KaSh88a}), which is a generalization of the concept of convex entropy/entropy flux pair introduced by Godunov \cite{Godu61a} and by Friedrichs and Lax \cite{FLa3} for hyperbolic systems. In particular, we recollect the calculation of such extension in the case of the NSF system, as performed by Kawashima and Shizuta \cite{KaSh88a}. 

Consider a viscous system of conservation laws in one dimension of the form
\begin{equation}
\label{vscl}
f^0(U)_t + f^1(U)_x = (G(U) U_x)_x,
\end{equation}
where $x \in \R$, $t \geq 0$, and the state variables $U$ take values in an open convex set $\cU \subset \R^n$. The vector valued functions $f^j : \cU \to\R^n$, $j = 0,1$, are smooth enough (at least, $f^j \in C^2(\cU;\R^n)$) and the viscosity tensor $G$ is an $n \times n$ matrix valued function of class $C^2(\cU;\R^{n \times n})$.

The conserved quantities can be expressed as functions of the state variables as
\[
V := f^0(U),
\]
for which we assume that $f^0 : \cU \to \cV$, with $\cV := f^0(\cU)$, is a diffeomorphism on its range with smooth inverse.

\begin{definition}
\label{defentropypair}
Let $\cE = \cE(V)$, $\cE : \cV \subset \R^n \to \R$, be a smooth function. Then $\cE$ is called an \emph{entropy function} for the viscous system \eqref{vscl} if the following statements hold:
\begin{itemize}
\item[(a)] $\cE$ is strictly convex in $\cV$ (i.e. the Hessian $D_V^2 \cE(V)$, $D_V^2 \cE :\cV \to \R^{n \times n}$, is positive definite for all $V \in \cV$).
\item[(b)] There exists a real valued smooth function $\Theta = \Theta(U)$, $\Theta : \cU \to \R$, such that
\begin{equation}
\label{fluxcond}
D_U\Theta(U) = D_V \cE(f^0(U)) D_U f^1(U),
\end{equation}
for all $U \in \cU$.
\item[(c)] The matrix fields
\begin{equation}
\label{coeffsAj}
A^j(U) := D_U f^0(U)^\top D_V^2\cE(f^0(U)) D_U f^j(U), \qquad j = 0,1,
\end{equation}
are symmetric for all $U \in \cU$.
\item[(d)] The viscosity tensor
\begin{equation}
\label{visctensorB}
B(U) := D_U f^0(U)^\top D_V^2\cE(f^0(U)) G(U),
\end{equation}
is symmetric, positive semi-definite for all $U \in \cU$.
\end{itemize}
In this case the system \eqref{vscl} is called \emph{symmetric dissipative} and the pair $(\cE,\Theta)$ is known as a \emph{convex entropy/entropy flux pair}.
\end{definition}

One of the main consequences of the existence of a convex entropy/entropy flux pair is the symmetrizability of system \eqref{vscl}. We now follow  \cite{KaSh88a} and review such procedure in the case of the NSF system in one dimension, which reads
\begin{equation}
\label{NSF1d}
\begin{aligned}
        \rho_t + (\rho u)_x &= 0,   \\
        (\rho u)_t + \big(\rho u^{2}+ p \big)_x &= ( \mu u_{x})_x, \\
        \big(\rho  e + \tfrac{1}{2}\rho u^{2} \big)_t + \big( \rho u \big( e + \tfrac{1}{2}u^2 \big) + pu \big)_x &= ( \alpha \theta_{x} + \mu u u_{x} )_x.       
\end{aligned} 
\end{equation}
Exactly as before, here $t>0$ and $x\in \R$ denote time and position, respectively, and the unknown scalar functions $\rho$, $u$, and $\theta$ represent the density, velocity field, and temperature, respectively. The pressure $p$ and internal energy $e$, as well as the viscosity $\mu > 0$ and heat conductivity $\alpha > 0$ coefficients, are assumed to be smooth functions of $(\rho, \theta) \in \cD$, and satisfying the usual conditions for classical fluids described in assumptions \hyperref[H1]{\rm{(H$_1$)}} and \hyperref[H3]{\rm{(H$_3$)}}. The state variables, denoted as $U = (\rho, u, \theta) \in \cU \subset \R^3$, range over the open convex set $\cU = \{ (\rho, u, \theta) \in \R^3 \, : \, (\rho,\theta) \in \cD \}$. Then system \eqref{NSF1d} can be recast in the form \eqref{vscl} where,
\begin{equation}
\label{3}
f^{0}(U) = \begin{pmatrix} \rho \\ \rho u \\ \rho(e + \tfrac{1}{2}u^{2}) \end{pmatrix}, \qquad f^{1}(U) = \begin{pmatrix}  \rho u \\ \rho u^{2} + p \\ \rho u (e + \tfrac{1}{2}u^{2})+ pu \end{pmatrix},
\end{equation}
and 
\begin{equation}
\label{viscG}
G(U) = \begin{pmatrix}
0 & 0 & 0 \\ 0 & \mu & 0 \\ 0 & \mu u & \alpha  
\end{pmatrix}.
\end{equation}
Notice that $f^0, f^1$ and $G$ are smooth functions of $U \in \cU$. The conserved quantities are $V = f^0(U) \in \cV$, where $\cV = f^0(\cU) \subset \R^3$. Notice also that $f^0$ defines a map $U \mapsto V$ whose Jacobian is given by
\begin{equation}
\label{Jacobf0}
D_UV = D_U f^0(U) = \begin{pmatrix}
1 & 0 & 0 \\ u & \rho & 0 \\ e + \tfrac{1}{2}u^{2} + \rho e_{\rho} & \rho u & \rho e_{\theta}
\end{pmatrix}.
\end{equation}
In view that the thermodynamic assumptions \hyperref[H1]{\rm{(H$_1$)}} and \hyperref[H3]{\rm{(H$_3$)}} hold, it is easy to verify that the mapping $U \mapsto V$ is injective on $\cU$ and that $D_U f^0$ is non-singular. Thus, $f^0$ is a diffeomorphism on its range (see \cite{KaSh88a}). The inverse of the Jacobian matrix \eqref{Jacobf0} is given by
\begin{equation}
\label{inverseDf0}
D_U f^0(U)^{-1} = \begin{pmatrix}
1 & 0 & 0 \\ -u/\rho & 1/\rho & 0 \\ (\tfrac{1}{2}u^{2} - e - \rho e_{\rho})/\rho e_\theta & -u/\rho e_\theta & 1/\rho e_\theta
\end{pmatrix}.
\end{equation}
Also, from \eqref{3} one obtains
\begin{equation}
\label{Jacobf1}
D_Uf^1(U) = \begin{pmatrix}
u & \rho & 0 \\ u^2 + p_{\rho} & 2 \rho u & p_{\theta} \\ u(e+\tfrac{1}{2}u^{2})+ \rho u e_{\rho} + u p_{\rho} & \rho(e + \tfrac{1}{2}u^{2}) + \rho u^{2} + p & \rho u e_{\theta} + u p_{\theta}
\end{pmatrix}. 
\end{equation}

From hypothesis \hyperref[H3]{\rm{(H$_3$)}}, the thermodynamic potentials satisfy \eqref{StdTherRel}, \eqref{Weyl} and \eqref{las4}, where $\eta = \eta(\rho,\theta)$ denotes the specific entropy. Hence, the (mathematical) entropy function is defined as
\[
\cE := - \rho \eta.
\]
We regard $\cE$ as a function of $V \in \cV$ via the mapping $U \mapsto V$, that is, $\cE = \cE(V)$, $\cE : \cV \to \R$. Hence, it can be shown that
\begin{equation}
\label{6Z}
Z(U) := D_V \cE (f^0(U))^\top = \begin{pmatrix} -\eta+\big(e-\tfrac{1}{2}u^{2} + p/\rho\big)/\theta \\ u/\theta \\  -1/\theta \end{pmatrix},
\end{equation}
defines a mapping $U \mapsto Z$ with domain $\cU$. Its Jacobian matrix can be calculated using \eqref{las4}. This yields,
\begin{equation}
\label{7Z}
D_UZ(U)  = \frac{1}{\theta}\begin{pmatrix}
p_{\rho}/\rho & -u & - \big(e-\tfrac{1}{2}u^{2}+ \rho e_{\rho}\big)/\theta \\ 0 & 1 & -u/\theta \\ 0 & 0 & 1/\theta
\end{pmatrix},
\end{equation}
which is clearly non-singular. Therefore the mapping $U \mapsto Z$ is a diffeomorphism from $\cU$ onto $\cZ := Z(\cU)$. Noticing that 
\begin{equation}
\label{idstar}
D_V^2 \cE(V) = D_V Z(V) = D_U Z(U(V)) D_VU(V) = D_U Z(U(V)) D_U f^0(U(V))^{-1},
\end{equation}
then it is easy to compute the matrix $A^0(U) =  D_U f^0(U)^\top D_V^2\cE(f^0(U)) D_U f^0(U)$ defined in \eqref{coeffsAj}. The result is
\begin{equation}
\label{exprA0}
A^0(U) = \frac{1}{\theta}\begin{pmatrix}
 p_{\rho}/\rho & 0 & 0 \\ 0 & \rho & 0 \\ 0 & 0 & \rho e_{\theta}/ \theta
\end{pmatrix},
\end{equation}
which is clearly symmetric and positive-definite, showing, in turn, that $D_V^2\cE$ is positive definite as well. Likewise, after some straightforward calculations that involve \eqref{inverseDf0} and \eqref{Jacobf1}, one obtains
\begin{align}
A^1(U) &= D_U f^0(U)^\top D_V^2\cE(f^0(U)) D_U f^1(U) \nonumber \\
&= D_U f^0(U)^\top D_UZ(U) D_U f^0(U(V))^{-1} D_U f^1(U) \nonumber \\
&= \frac{1}{\theta} \begin{pmatrix}
 up_{\rho}/\rho & p_\rho & 0 \\ p_\rho & u\rho & p_\theta \\ 0 & p_\theta & \rho u e_{\theta}/ \theta
\end{pmatrix}, \label{exprA1}
\end{align}
which is symmetric, and,
\begin{align}
B(U) &= D_U f^0(U)^\top D_V^2\cE(f^0(U)) G(U) \nonumber \\
&= D_U f^0(U)^\top D_UZ(U) D_U f^0(U(V))^{-1} G(U) \nonumber \\
&= \frac{1}{\theta} \begin{pmatrix}
 0 & 0 & 0 \\ 0 & \mu & 0 \\ 0 & 0 & \alpha/\theta
\end{pmatrix}, \label{exprB}
\end{align}
which is symmetric and positive semi-definite. Hence, the function $\cE$ satisfies properties (a), (c) and (d) of Definition \ref{defentropypair}. We now define the entropy flux as
\[
\Theta:= - \rho u \eta,
\]
from which we have
\[
D_U \Theta = \begin{pmatrix} -u \eta - \rho u \eta_\rho, & - \rho \eta, & - \rho u \eta_\theta \end{pmatrix}.
\]
Using \eqref{Jacobf1} and the thermodynamic relations \eqref{las4} it is easy to verify that
\[
D_V \cE (f^0(U)) D_U f^1(U) = Z(U)^\top D_Uf^1(U) = D_U \Theta(U),
\]
for all $U \in \cU$. This shows \eqref{fluxcond} and, consequently, property (b) of Definition \ref{defentropypair}. We conclude that $(\cE, \Theta)$ defined above is a convex entropy/entropy flux pair for the symmetric dissipative NSF system \eqref{NSF1d}.

The concept of convex entropy is useful to recast a system of the form \eqref{vscl} as a symmetric system for perturbations of a constant state. Let $\bU = (\brho, \bu, \bthe)\in \cU$ be a constant equilibrium state. We then rewrite system \eqref{NSF1d} as
\begin{equation}
\label{10}
\begin{aligned}
f^0(U)_t + D_U f^1(\bU) D_U f^0(\bU)^{-1} f^0(U)_x - &G(\bU) D_U f^0(\bU)^{-1} f^0(U)_{xx} = \widetilde{q}(U)_x + \big( \widetilde{Q}(U) U_x \big)_x,
\end{aligned}
\end{equation}
where
\[
\begin{aligned}
\widetilde{q}(U) &:= D_U f^1(\bU) D_U f^0(\bU)^{-1} \big( f^0(U) - f^0(\bU) \big) - \big( f^1(U) - f^1(\bU) \big),\\
\widetilde{Q}(U) &:= \big( G(U) D_U f^0(U)^{-1} - G(\bU) D_U f^0(\bU)^{-1} \big) D_U f^0(U).
\end{aligned}
\]
Let us define the perturbed variables as
\[
W := D_U f^0(\bU)^{-1} \big( f^0(U) - f^0(\bU)).
\]
From the discussion above it is now clear that the mapping $U \mapsto W$ is invertible. Multiply system \eqref{10} on the left by $D_U f^0(\bU)^\top D_V^2 \cE (\bV)$, where $\bV = f^0(\bU) \in \cV$. The result is the following symmetric (at the leading order) system
\begin{equation}
\label{12}
A^0(\bU) W_t + A^1(\bU) W_x - B(\bU) W_{xx} = q(U)_x + (Q(U)U_x)_x,
\end{equation}
where
\[
\begin{aligned}
q(U) &:= D_U f^0(\bU)^{\top} D_V^2 \cE(\bV) \widetilde{q}(U),\\
Q(U) &:= D_U f^0(\bU)^{\top} D_V^2 \cE(\bV) \widetilde{Q}(U),
\end{aligned}
\]
and the matrices $A^j(\bU)$, $j = 0,1$, and $B(\bU)$ are defined in \eqref{coeffsAj} and \eqref{visctensorB}, respectively, and are evaluated at the constant state $\bU$. If we denote all the thermodynamic potentials and their derivatives evaluated at $\bU = (\brho, \bu, \bthe)$ with overlined variables (for example, $\bp = p(\brho,\bthe)$, $\be_\theta = e_\theta(\brho,\bthe)$, etc.) then from expressions \eqref{exprA0}, \eqref{exprA1} and \eqref{exprB} we obtain
\begin{equation}
\label{15_1}
A^{0}(\bU) = \frac{1}{\bthe}\begin{pmatrix}
\bp_{\rho}/ \brho & 0 & 0 \\ 0 & \brho & 0 \\ 0 & 0 &  \be_{\theta} \brho / \bthe
\end{pmatrix},
\end{equation}
\begin{equation}
\label{15_2}
A^{1}(\bU) = \frac{1}{\bthe} \begin{pmatrix}
 \bp_{\rho} \bu / \brho & \bp_{\rho} & 0 \\ \bp_{\rho} & \bu \, \brho & \bp_{\theta}  \\ 0 & \bp_{\theta} & \brho \, \bu \, \be_{\theta}  / \bthe
\end{pmatrix},
\end{equation}
and,
\begin{equation}
\label{15_3}
B(\bU) = \frac{1}{\bthe}  \begin{pmatrix}
0 & 0 & 0 \\ 0 & \bmu & 0 \\ 0 & 0 & \balp/ \bthe
\end{pmatrix}.
\end{equation}

Clearly, these matrices are all symmetric. In addition, $A^0$ is positive definite and $B$ is positive semi-definite. Hence, system \eqref{NSF1d} can be recast as a system for the perturbed variables $W$ of the form \eqref{12}, with symmetric constant coefficients at leading order and with higher order perturbations of any constant state encoded in the terms of the right hand side of \eqref{12} (those involving $q$ and $Q$).

\section{Proof of Lemma \ref{lemorderN}}
\label{appB}

Let us first examine the expression of $D_U f^0(\bU)^\top D_V^2 \cE(f^0(\bU))$, a constant matrix. From formula \eqref{idstar} in Appendix \ref{ConvExtN-S} we have
\[
D_V^2 \cE(f^0(U)) = D_U Z(U) D_U f^0(U)^{-1},
\]
for all $U \in \cU$, where $Z = Z(U)$ is defined in \eqref{6Z}. Using \eqref{7Z}, \eqref{Jacobf0} and \eqref{inverseDf0} and performing straightforward calculations we obtain the following structure
\begin{equation}
\label{structDD}
\begin{aligned}
D_U f^0(\bU)^\top D_V^2 \cE(f^0(\bU)) &= D_U f^0(\bU)^\top D_U Z(\bU) D_U f^0(\bU)^{-1} = \begin{pmatrix}
\bp_\rho / \brho \, \bthe & 0 & 0 \\ * & * & * \\ * & * & *
\end{pmatrix}.
\end{aligned}
\end{equation}
Here the symbol $\, * \,$ indicates a constant which does not play a role in the forthcoming computations. (In other words, we only need the first row.)

Let us now establish some estimates on the nonlinear terms \eqref{nonlinterms}. For example, consider the term $r(U,U_x)$. Using the decompositions of $F^0(U,U_x)$ and $F^1(U,U_x)$ (equations \eqref{decF0} and \eqref{decF1}) we have
\[
\begin{aligned}
\widetilde{r}(U,U_x) &= - \big( F^1(U,U_x) - F^1(\bU,0) \big) + D_U F^1(\bU,0) D_U F^0(\bU,0)^{-1} \big( F^0(U,U_x) - F^0(\bU,0) \big)\\
&= - \big( f^1(U) - f^1(\bU) - D_Uf^1(\bU) D_U f^0(\bU)^{-1} (f^0(U) - f^0(\bU)) \big) + \\
&\quad - \Gamma^1(U,U_x) + D_Uf^1(\bU) D_U f^0(\bU)^{-1} \Gamma^0(U,U_x)\\
&= - \big( f^1(U) - f^1(\bU) - D_Uf^1(\bU) D_U f^0(\bU)^{-1} (D_Uf^0(\bU)(U-\bU) + O(|U - \bU|^2)) \big) + \\
&\quad - \Gamma^1(U,U_x) + D_Uf^1(\bU) D_U f^0(\bU)^{-1} \Gamma^0(U,U_x)\\
&= - \big( f^1(U) - f^1(\bU) - D_Uf^1(\bU) (U-\bU) + O(|U - \bU|^2) \big) + O(\rho_x^2)\\
&= O(|U - \bU|^2) \big) + O(\rho_x^2),
\end{aligned}
\]
inasmuch as $\Gamma^j(U,U_x) = O(\rho_x^2)$, $j = 0,1$. But more can be said about the form of this term: its first entry is equal to zero. This fact can be verified by inspection. Upon substitution of the expressions for $D_Uf^0(\bU)^{-1}$ and $D_U f^1(\bU)$ (see formulae \eqref{inverseDf0} and \eqref{Jacobf1} in Appendix \ref{ConvExtN-S}) it is not hard to verify that $\widetilde{r}(U,U_x)$ has the following structure,
\[
\begin{aligned}
\widetilde{r}(U,U_x) &= - \big( f^1(U) - f^1(\bU) - D_Uf^1(\bU) D_U f^0(\bU)^{-1} (f^0(U) - f^0(\bU)) \big) + \\
&\quad - \Gamma^1(U,U_x) + D_Uf^1(\bU) D_U f^0(\bU)^{-1} \Gamma^0(U,U_x)\\
&= - \begin{pmatrix} \rho u - \brho \, \bu \\ * \\ * \end{pmatrix} + \begin{pmatrix} 0 & 1 & 0 \\ * & * & * \\ * & * & *  \end{pmatrix}\begin{pmatrix} \rho - \brho \\ \rho u - \brho \, \bu \\ * \end{pmatrix} - \Gamma^1(U,U_x) + \\&\quad + \begin{pmatrix} 0 & 1 & 0 \\ * & * & * \\ * & * & *  \end{pmatrix} \Gamma^0(U,U_x) = \begin{pmatrix} 0 \\ * \\ * \end{pmatrix},
\end{aligned}
\]
due to the form of the expressions for $\Gamma^j(U,U_x)$ in \eqref{decF0} and \eqref{decF1}. Given the form of the matrix in \eqref{structDD} we obtain
\begin{equation}
\label{ordr}
r(U, U_{x}) = D_Uf^0(\bU)^\top D_V^2 \cE(f^0(\bU)) \widetilde{r}(U,U_x) =O \big( \vert U- \bU \vert^{2} + \vert \rho_{x} \vert ^{2} \big) \begin{pmatrix}
0 \\ 1 \\ 1
\end{pmatrix}.
\end{equation}

In the same fashion one can examine the rest of the terms involved in the right hand side of \eqref{Wsystem}. We shall gloss over some of the computational details, which are left to the dedicated reader. For instance, let us now consider the term $R(U, U_{x})U_{x}$. Since, 
\[
\widetilde{R}(U,U_x) := \big[ G(U) D_UF^0(U,U_x)^{-1} - G(\bU) D_U F^0(\bU,0)^{-1} \big] D_U F^0(U,U_x),
\]
then from a direct computation one sees that the first rows of the matrices $G(U) D_UF^0(U,U_x)^{-1}$ and $G(\bU) D_U F^0(\bU,0)^{-1}$ are zero. In view of the fact that
\[
G(U) D_UF^0(U,U_x)^{-1} - G(\bU) D_U F^0(\bU,0)^{-1} = O \big( \vert U - \bU \vert + \vert \rho_{x} \vert ^{2} \big),
\]
we obtain
\[
\widetilde{R}(U,U_{x})U_{x} = O \big( \vert U- \bU \vert \vert U_{x} \vert + \vert U_{x} \vert^{2} \big) \begin{pmatrix}
0 \\ 1 \\ 1
\end{pmatrix}. 
\]
By the structure of $D_Uf^0(\bU)^\top D_V^2 \cE(f^0(\bU))$ in \eqref{structDD} we end up with
\begin{equation}\label{ordR}
R(U,U_{x}) = O \big( \vert U- \bU \vert \vert U_{x} \vert + \vert U_{x} \vert^{2} \big) \begin{pmatrix}
0 \\ 1 \\ 1
\end{pmatrix}.
\end{equation}
Now we compute the order of the term $I(U, U_{x}, U_{xx})$. Let us recall the expression for $\widetilde{I}(U, U_{x}, U_{xx})$: 
\[
\begin{aligned}
\widetilde{I} (U,U_x,U_{xx}) &= - G(\bU) D_U F^0(\bU,0)^{-1} D_{U_x} F^0(U,U_x) U_{xx} + \\
&\quad + \big[ H(U) D_U F^0(U,U_x)^{-1} - H(\bU) D_UF^0(\bU,0)^{-1} \big] D_U F^0(U,U_x) U_{xx} +\\
&\quad - H(\bU) D_UF^0(\bU,0)^{-1} \big[ \partial_x (D_U F^0(U,U_x)) U_x + D_{U_x} F^0(U,U_x) U_{xxx} + \\
&\qquad \qquad\qquad\qquad\qquad \qquad+ \partial_x (D_{U_x} F^0(U,U_x))  U_{xx}\big].
\end{aligned}
\]
Let us look at the first term in the expression above. As $F^{0}(U, U_{x}) = f^{0}(U) + \Gamma^{0}(U, U_{x})$, so that
\[
D_{U_{x}} F^{0}(U, U_{x}) = D_{U_{x}} \Gamma^{0}(U, U_{x}) = \begin{pmatrix}
0 & 0 & 0 \\ 0 & 0 & 0 \\ 2\rho(\kappa-\theta \kappa_{\theta}) \rho_{x} & 0 & 0
\end{pmatrix},
\]
we have
\[
D_{U_{x}}F^{0}(U, U_{x})U_{xx} = \begin{pmatrix}
0 \\ 0 \\ 2 \rho (\kappa - \theta \kappa_{\theta}) \rho_{x}\rho_{xx} \end{pmatrix} = O \big( \vert \rho_{x} \vert \vert \rho_{xx} \vert \big) \begin{pmatrix}
0 \\ 0 \\ 1
\end{pmatrix}.
\]
Thus, using the fact that the first row of the matrix $G(\bU)D_{U}F^{0}(U, 0)$ is zero, we are led to
\begin{equation}\label{ordI-1}
G(\bU)D_{U}F^{0}(\bU, 0)^{-1}D_{U_{x}}F^{0}(U,U_{x})U_{xx} = O \big( \vert \rho_{x} \vert \vert \rho_{xx} \vert \big) \begin{pmatrix}
0 \\ 1 \\ 1 
\end{pmatrix}.
\end{equation}
Let us now consider the second term in the expression of $\widetilde{I}(U, U_{x}, U_{xx})$. A straightforward computation shows that
\[
H(U)D_{U}F^{0}(U, U_{x})^{-1} - H(\bU)D_{U}F^{0}(\bU, 0)^{-1}= \begin{pmatrix}
0 & 0 & 0 \\ k \rho - \bk \brho & 0 & 0 \\ k \rho u - \bk \brho \, \bu & 0 & 0
\end{pmatrix}.
\]
This yields, in turn,
\begin{equation}
\label{ordI-2}
\begin{aligned}
\big[ H(U)D_{U}F^{0}(U, U_{x})^{-1} -& H(\bU)D_{U}F^{0}(\bU, 0)^{-1} \big] D_{U}F^{0}(U,U_{x})U_{xx} = O \big( \vert U - \bU \vert \vert \rho_{xx} \vert \big) \begin{pmatrix}
0 \\ 1 \\ 1
\end{pmatrix}.
\end{aligned}
\end{equation}

In the calculation above we have used the fact that the first coordinate of the vector $D_{U}F^{0}(U,U_{x})U_{xx}$ is just $\rho_{xx}$. To conclude the examination of $\widetilde{I}(U, U_{x}, U_{xx})$, let us estimate the last term, namely,
\[
\begin{aligned}
H(\bU) D_UF^0(\bU,0)^{-1} & \big[ \partial_x (D_U F^0(U,U_x)) U_x + D_{U_x} F^0(U,U_x) U_{xxx} + \partial_x (D_{U_x} F^0(U,U_x))  U_{xx}\big].
\end{aligned}
\]
First, let us observe that 
\[
H(\bU)D_{U}F^{0}(\bU, 0)^{-1} = \begin{pmatrix}
0 & 0 & 0 \\ \bk \brho & 0 & 0 \\ \bk \brho \bu & 0 & 0 
\end{pmatrix}.
\]
Use the already computed matrix, $D_{U_{x}}F^{0}(U, U_{x})$, in order to obtain
\[
D_{U_{x}}F^{0}(U, U_{x})U_{xxx} = \begin{pmatrix}
0 \\ 0 \\ 2 \rho (\kappa- \theta \kappa_{\theta} ) \rho_{x} \rho_{xxx}
\end{pmatrix}.
\]
For the terms $\partial_x (D_U F^0(U,U_x)) U_x$ and $\partial_x (D_{U_x} F^0(U,U_x))  U_{xx}$ we have
\[
\partial_x (D_U F^0(U,U_x)) U_x = \nabla_{U} \cdot D_{U}F^{0}(U, U_{x}) (U_{x}, U_{x}) + \nabla_{U_{x}} \cdot D_{U}F^{0}(U, U_{x})(U_{xx}, U_{x}),
\]
and
\[
\partial_x (D_{U_x} F^0(U,U_x))  U_{xx} = \nabla_{U} \cdot D_{U_{x}}F^{0}(U, U_{x}) (U_{x}, U_{xx}) + \nabla_{U_{x}}\cdot D_{U_{x}}F^{0}(U, U_{x})(U_{xx}, U_{xx}).
\]

Now, from the expression for $D_{U}F^{0}(U, U_{x})$ and $D_{U_{x}}F^{0}(U,U_{x})$ one can easily show that
the vectors $\nabla_{U} \cdot D_{U}F^{0}(U, U_{x})$, $\nabla_{U_{x}} \cdot D_{U}F^{0}(U, U_{x})$, $\nabla_{U} \cdot D_{U_{x}}F^{0}(U, U_{x})$, and  $\nabla_{U_{x}} \cdot D_{U_{x}}F^{0}(U, U_{x})$ all have first and second coordinates equal to zero, and so does the vector 
\[
\partial_x (D_U F^0(U,U_x)) U_x + D_{U_x} F^0(U,U_x) U_{xxx} + \partial_x (D_{U_x} F^0(U,U_x))  U_{xx}.
\]
Therefore by the form of $H(\bU)D_{U}F^{0}(\bU, 0)^{-1}$ we obtain
\begin{equation}
\label{ordI-3}
\begin{aligned}
H(\bU) D_UF^0(\bU,0)^{-1} & \big[ \partial_x (D_U F^0(U,U_x)) U_x + D_{U_x} F^0(U,U_x) U_{xxx} +  \partial_x (D_{U_x} F^0(U,U_x))  U_{xx}\big] = 0. 
\end{aligned}
\end{equation}
Then combining \eqref{ordI-1}, \eqref{ordI-2} and \eqref{ordI-3}, together with the form of the matrix $D_Uf^0(\bU)^\top D_V^2 \cE(f^0(\bU))$ in \eqref{structDD}, we arrive at
\begin{equation}
\label{ordI}
I(U,U_{x},U_{xx}) = O \big( \vert U - \bU \vert \vert \rho_{xx} \vert + \vert \rho_{x} \vert \vert \rho_{xx} \vert \big) \begin{pmatrix}
0 \\ 1 \\ 1
\end{pmatrix}. 
\end{equation}

It remains to estimate the term $g(U,U_{x})$. From the expressions for $\widetilde{g}(U,U_{x})$ and for the matrix $D_Uf^0(\bU)^\top D_V^2 \cE(f^0(\bU))$ which are given, respectively, by \eqref{deftildeg} and \eqref{structDD}, one easily obtains 
\begin{equation}\label{ordg}
g(U,U_{x})= D_Uf^0(\bU)^\top D_V^2 \cE(f^0(\bU)) \widetilde{g}(U,U_{x}) = O \big( \vert U_{x} \vert^{2} \big) \begin{pmatrix}
0 \\ 1 \\ 1
\end{pmatrix}. 
\end{equation}
A combination of estimates \eqref{ordr}, \eqref{ordR}, \eqref{ordI} and \eqref{ordg} yields the result. Lemma \ref{lemorderN} is proved.

\qed

\def\cprime{$'$}


\begin{thebibliography}{10}

\bibitem{AMW98}
{\sc D.~M. Anderson, G.~B. McFadden, and A.~A. Wheeler}, {\em Diffuse-interface
  methods in fluid mechanics}, in Annual Reviews of Fluid Mechanics, J.~L.
  Lumley, M.~Van~Dyke, and H.~L. Reed, eds., vol.~30 of Annu. Rev. Fluid Mech.,
  Annual Reviews, Palo Alto, CA, 1998, pp.~139--165.

\bibitem{BDD06}
{\sc S.~Benzoni-Gavage, R.~Danchin, and S.~Descombes}, {\em Well-posedness of
  one-dimensional {K}orteweg models}, Electron. J. Differ. Equ. \textbf{2006}
  (2006), no.~59, pp.~1--35 (electronic).

\bibitem{BDDJ05}
{\sc S.~Benzoni-Gavage, R.~Danchin, S.~Descombes, and D.~Jamet}, {\em Structure
  of {K}orteweg models and stability of diffuse interfaces}, Interfaces Free
  Bound. \textbf{7} (2005), no.~4, pp.~371--414.

\bibitem{BrDjL03}
{\sc D.~Bresch, B.~Desjardins, and C.-K. Lin}, {\em On some compressible fluid
  models: {K}orteweg, lubrication, and shallow water systems}, Commun. Partial
  Differ. Equ. \textbf{28} (2003), no.~3-4, pp.~843--868.

\bibitem{CTX15}
{\sc H.~Cai, Z.~Tan, and Q.~Xu}, {\em Time periodic solutions of the
  non-isentropic compressible fluid models of {K}orteweg type}, Kinet. Relat.
  Models \textbf{8} (2015), no.~1, pp.~29--51.

\bibitem{Callen-2e}
{\sc H.~B. Callen}, {\em Thermodynamics and an Introduction to
  Thermostatistics}, John Wiley \& Sons, New York, NY, second~ed., 1985.

\bibitem{ChDX21}
{\sc F.~Charve, R.~Danchin, and J.~Xu}, {\em Gevrey analyticity and decay for
  the compressible {N}avier-{S}tokes system with capillarity}, Indiana Univ.
  Math. J. \textbf{70} (2021), no.~5, pp.~1903--1944.

\bibitem{ChHa11}
{\sc F.~Charve and B.~Haspot}, {\em Convergence of capillary fluid models: from
  the non-local to the local {K}orteweg model}, Indiana Univ. Math. J.
  \textbf{60} (2011), no.~6, pp.~2021--2059.

\bibitem{CCD15}
{\sc Z.~Chen, X.~Chai, B.~Dong, and H.~Zhao}, {\em Global classical solutions
  to the one-dimensional compressible fluid models of {K}orteweg type with
  large initial data}, J. Differ. Equ. \textbf{259} (2015), no.~8,
  pp.~4376--4411.

\bibitem{CHZ15}
{\sc Z.~Chen, L.~He, and H.~Zhao}, {\em Nonlinear stability of traveling wave
  solutions for the compressible fluid models of {K}orteweg type}, J. Math.
  Anal. Appl. \textbf{422} (2015), no.~2, pp.~1213--1234.

\bibitem{CHZ17}
{\sc Z.~Chen, L.~He, and H.~Zhao}, {\em Global smooth
  solutions to the nonisothermal compressible fluid models of {K}orteweg type
  with large initial data}, Z. Angew. Math. Phys. \textbf{68} (2017), no.~4,
  pp.~Paper No. 79, 37.

\bibitem{ChXi13}
{\sc Z.~Chen and Q.~Xiao}, {\em Nonlinear stability of viscous contact wave for
  the one-dimensional compressible fluid models of {K}orteweg type}, Math.
  Methods Appl. Sci. \textbf{36} (2013), no.~17, pp.~2265--2279.

\bibitem{ChXM14}
{\sc Z.~Chen, L.~Xiong, and Y.~Meng}, {\em Convergence to the superposition of
  rarefaction waves and contact discontinuity for the 1-{D} compressible
  {N}avier-{S}tokes-{K}orteweg system}, J. Math. Anal. Appl. \textbf{412}
  (2014), no.~2, pp.~646--663.

\bibitem{ChZha14}
{\sc Z.~Chen and H.~Zhao}, {\em Existence and nonlinear stability of stationary
  solutions to the full compressible {N}avier-{S}tokes-{K}orteweg system}, J.
  Math. Pures Appl. (9) \textbf{101} (2014), no.~3, pp.~330--371.

\bibitem{DD01}
{\sc R.~Danchin and B.~Desjardins}, {\em Existence of solutions for
  compressible fluid models of {K}orteweg type}, Ann. Inst. H. Poincar\'e Anal.
  Non Lin\'eaire \textbf{18} (2001), no.~1, pp.~97--133.

\bibitem{DS85}
{\sc J.~E. Dunn and J.~Serrin}, {\em On the thermomechanics of interstitial
  working}, Arch. Ration. Mech. Anal. \textbf{88} (1985), no.~2, pp.~95--133.

\bibitem{EllPin75a}
{\sc R.~S. Ellis and M.~A. Pinsky}, {\em The first and second fluid
  approximations to the linearized {B}oltzmann equation}, J. Math. Pures Appl.
  (9) \textbf{54} (1975), pp.~125--156.

\bibitem{EllPin75b}
{\sc R.~S. Ellis and M.~A. Pinsky}, {\em The projection of
  the {N}avier-{S}tokes equations upon the {E}uler equations}, J. Math. Pures
  Appl. (9) \textbf{54} (1975), pp.~157--181.

\bibitem{Erng66}
{\sc A.~C. Eringen}, {\em A unified theory of thermomechanical materials},
  Internat. J. Engrg. Sci. \textbf{4} (1966), pp.~179--202.

\bibitem{FrKo15a}
{\sc H.~Freist\"{u}hler and M.~Kotschote}, {\em Models of two-phase fluid
  dynamics \`a la {A}llen-{C}ahn, {C}ahn-{H}illiard, and {$\ldots$}
  {K}orteweg!}, Confluentes Math. \textbf{7} (2015), no.~2, pp.~57--66.

\bibitem{FrKo17}
{\sc H.~Freist\"{u}hler and M.~Kotschote}, {\em Phase-field and
  {K}orteweg-type models for the time-dependent flow of compressible two-phase
  fluids}, Arch. Ration. Mech. Anal. \textbf{224} (2017), no.~1, pp.~1--20.

\bibitem{FrKo19}
{\sc H.~Freist\"{u}hler and M.~Kotschote}, {\em Phase-field
  descriptions of two-phase compressible fluid flow: interstitial working and a
  reduction to {K}orteweg theory}, Quart. Appl. Math. \textbf{77} (2019),
  no.~3, pp.~489--496.

\bibitem{Frd54}
{\sc K.~O. Friedrichs}, {\em Symmetric hyperbolic linear differential
  equations}, Comm. Pure Appl. Math. \textbf{7} (1954), pp.~345--392.

\bibitem{FLa3}
{\sc K.~O. Friedrichs and P.~D. Lax}, {\em Systems of conservation equations
  with a convex extension}, Proc. Nat. Acad. Sci. U.S.A. \textbf{68} (1971),
  pp.~1686--1688.

\bibitem{GLZh20}
{\sc J.~Gao, Z.~Lyu, and Z.-a. Yao}, {\em Lower bound of decay rate for
  higher-order derivatives of solution to the compressible fluid models of
  {K}orteweg type}, Z. Angew. Math. Phys. \textbf{71} (2020), no.~4, p.~108.

\bibitem{GaZoYa15}
{\sc J.~Gao, Y.~Zou, and Z.-a. Yao}, {\em Long-time behavior of solution for
  the compressible {N}avier-{S}tokes-{K}orteweg equations in {$\Bbb{R}^3$}},
  Appl. Math. Lett. \textbf{48} (2015), pp.~30--35.

\bibitem{Godu61a}
{\sc S.~K. Godunov}, {\em An interesting class of quasi-linear systems}, Dokl.
  Akad. Nauk SSSR \textbf{139} (1961), pp.~521--523.

\bibitem{Gurt65}
{\sc M.~E. Gurtin}, {\em Thermodynamics and the possibility of spatial
  interaction in elastic materials}, Arch. Ration. Mech. Anal. \textbf{19}
  (1965), pp.~339--352.

\bibitem{Gurt81}
{\sc M.~E. Gurtin}, {\em An introduction to
  continuum mechanics}, vol.~158 of Mathematics in Science and Engineering,
  Academic Press, Inc. (Harcourt Brace Jovanovich, Publishers), New
  York-London, 1981.

\bibitem{Gurt96}
{\sc M.~E. Gurtin}, {\em Generalized
  {G}inzburg-{L}andau and {C}ahn-{H}illiard equations based on a microforce
  balance}, Phys. D \textbf{92} (1996), no.~3-4, pp.~178--192.

\bibitem{GPV96}
{\sc M.~E. Gurtin, D.~Polignone, and J.~Vi\~{n}als}, {\em Two-phase binary
  fluids and immiscible fluids described by an order parameter}, Math. Models
  Methods Appl. Sci. \textbf{6} (1996), no.~6, pp.~815--831.

\bibitem{HaEv82}
{\sc H.~J.~M. Hanley and D.~J. Evans}, {\em A thermodynamics for a system under
  shear}, J. Chem. Phys. \textbf{76} (1982), no.~6, pp.~3225--3232.

\bibitem{Hasp09}
{\sc B.~Haspot}, {\em Existence of strong solutions for nonisothermal
  {K}orteweg system}, Ann. Math. Blaise Pascal \textbf{16} (2009), no.~2,
  pp.~431--481.

\bibitem{Hasp16}
{\sc B.~Haspot}, {\em Existence of global
  strong solution for {K}orteweg system with large infinite energy initial
  data}, J. Math. Anal. Appl. \textbf{438} (2016), no.~1, pp.~395--443.

\bibitem{HaLi94}
{\sc H.~Hattori and D.~N. Li}, {\em Solutions for two-dimensional system for
  materials of {K}orteweg type}, SIAM J. Math. Anal. \textbf{25} (1994), no.~1,
  pp.~85--98.

\bibitem{HaLi96b}
{\sc H.~Hattori and D.~N. Li}, {\em The existence of
  global solutions to a fluid dynamic model for materials for {K}orteweg type},
  J. Partial Differ. Equ. \textbf{9} (1996), no.~4, pp.~323--342.

\bibitem{HaLi96a}
{\sc H.~Hattori and D.~N. Li}, {\em Global solutions of
  a high-dimensional system for {K}orteweg materials}, J. Math. Anal. Appl.
  \textbf{198} (1996), no.~1, pp.~84--97.

\bibitem{HdMl10}
{\sc M.~Heida and J.~M\'{a}lek}, {\em On compressible {K}orteweg fluid-like
  materials}, Internat. J. Engrg. Sci. \textbf{48} (2010), no.~11,
  pp.~1313--1324.

\bibitem{HPZ18}
{\sc X.~Hou, H.~Peng, and C.~Zhu}, {\em Global well-posedness of the 3{D}
  non-isothermal compressible fluid model of {K}orteweg type}, Nonlinear Anal.
  Real World Appl. \textbf{43} (2018), pp.~18--53.

\bibitem{HYZ17}
{\sc X.~Hou, L.~Yao, and C.~Zhu}, {\em Vanishing capillarity limit of the
  compressible non-isentropic {N}avier-{S}tokes-{K}orteweg system to
  {N}avier-{S}tokes system}, J. Math. Anal. Appl. \textbf{448} (2017), no.~1,
  pp.~421--446.

\bibitem{Hu05}
{\sc J.~Humpherys}, {\em Admissibility of viscous-dispersive systems}, J.
  Hyperbolic Differ. Equ. \textbf{2} (2005), no.~4, pp.~963--974.

\bibitem{IoIo01}
{\sc R.~J. Iorio, Jr. and V.~d.~M. Iorio}, {\em Fourier analysis and partial
  differential equations}, vol.~70 of Cambridge Studies in Advanced
  Mathematics, Cambridge University Press, Cambridge, 2001.

\bibitem{JCL10}
{\sc D.~Jou, J.~Casas-V\'{a}zquez, and G.~Lebon}, {\em Extended Irreversible
  Thermodynamics}, Springer-Verlag, New York, fourth~ed., 2010.

\bibitem{KaTh83}
{\sc S.~Kawashima}, {\em Systems of a Hyperbolic-Parabolic Composite Type, with
  Applications to the Equations of Magnetohydrodynamics}, PhD thesis, Kyoto
  University, 1983.

\bibitem{KSX22}
{\sc S.~Kawashima, Y.~Shibata, and J.~Xu}, {\em Dissipative structure for
  symmetric hyperbolic-parabolic systems with {K}orteweg-type dispersion},
  Commun. Partial Differ. Equ. \textbf{47} (2022), no.~2, pp.~378--400.

\bibitem{KaSh88a}
{\sc S.~Kawashima and Y.~Shizuta}, {\em On the normal form of the symmetric
  hyperbolic-parabolic systems associated with the conservation laws}, Tohoku
  Math. J. (2) \textbf{40} (1988), no.~3, pp.~449--464.

\bibitem{KMR23}
{\sc J.~Keim, C.-D. Munz, and C.~Rohde}, {\em A relaxation model for the
  non-isothermal {N}avier-{S}tokes-{K}orteweg equations in confined domains},
  J. Comput. Phys. \textbf{474} (2023), pp.~Paper No. 111830, 28.

\bibitem{Kortw1901}
{\sc D.~J. Korteweg}, {\em Sur la forme que prennent les \'{e}quations du
  mouvement des fluides si l'on tient compte des forces capillaires causées
  par des variations de densit\'{e} consid\'{e}rables mais continues et sur la
  th\'{e}orie de la capillarit\'{e} dans l'hypoth\`{e}se d'une variation
  continue de la densit\'{e}}, Arch. N\'{e}erl. Sci. Exactes Nat. Ser. II
  \textbf{6} (1901), pp.~1--24.

\bibitem{Kot08}
{\sc M.~Kotschote}, {\em Strong solutions for a compressible fluid model of
  {K}orteweg type}, Ann. Inst. H. Poincar\'e Anal. Non Lin\'eaire \textbf{25}
  (2008), no.~4, pp.~679--696.

\bibitem{Kot10}
{\sc M.~Kotschote}, {\em Strong
  well-posedness for a {K}orteweg-type model for the dynamics of a compressible
  non-isothermal fluid}, J. Math. Fluid Mech. \textbf{12} (2010), no.~4,
  pp.~473--484.

\bibitem{Kot12a}
{\sc M.~Kotschote}, {\em Dynamics of
  compressible non-isothermal fluids of non-{N}ewtonian {K}orteweg type}, SIAM
  J. Math. Anal. \textbf{44} (2012), no.~1, pp.~74--101.

\bibitem{LeJC08}
{\sc G.~Lebon, D.~Jou, and J.~Casas-V\'{a}zquez}, {\em Understanding
  non-equilibrium thermodynamics. Foundations, applications, frontiers},
  Springer-Verlag, Berlin, 2008.

\bibitem{MoVi16}
{\sc A.~Morro and M.~Vianello}, {\em Interstitial energy flux and stress-power
  for second-gradient elasticity}, Math. Mech. Solids \textbf{21} (2016),
  no.~4, pp.~403--412.

\bibitem{Nir59}
{\sc L.~Nirenberg}, {\em On elliptic partial differential equations}, Ann.
  Scuola Norm. Sup. Pisa Cl. Sci. (3) \textbf{13} (1959), pp.~115--162.

\bibitem{Pipp57}
{\sc A.~B. Pippard}, {\em Elements of classical thermodynamics for advanced
  students of physics}, Cambridge University Press, New York, 1957.

\bibitem{PlV22}
{\sc R.~G. Plaza and J.~M. Valdovinos}, {\em Dissipative structure of
  one-dimensional isothermal compressible fluids of {K}orteweg type}, J. Math.
  Anal. Appl. \textbf{514} (2022), no.~2, p.~126336.

\bibitem{Se81}
{\sc J.~Serrin}, {\em Phase transitions and interfacial layers for van der
  {W}aals fluids}, in Recent methods in nonlinear analysis and applications
  ({N}aples, 1980), A.~Canfora, S.~Rionero, C.~Sbordone, and G.~Trombetti,
  eds., Liguori, Naples, 1981, pp.~169--175.

\bibitem{Se83}
{\sc J.~Serrin}, {\em The form of
  interfacial surfaces in {K}orteweg's theory of phase equilibria}, Quart.
  Appl. Math. \textbf{41} (1983), no.~3, pp.~357--364.

\bibitem{SSZh22}
{\sc W.~Shi, Z.~Song, and J.~Zhang}, {\em Large-time behavior of solutions in
  the critical spaces for the non-isentropic compressible {N}avier-{S}tokes
  equations with capillarity}, J. Math. Fluid Mech. \textbf{24} (2022), no.~3,
  pp.~Paper No. 59, 33.

\bibitem{ShKa85}
{\sc Y.~Shizuta and S.~Kawashima}, {\em Systems of equations of
  hyperbolic-parabolic type with applications to the discrete {B}oltzmann
  equation}, Hokkaido Math. J. \textbf{14} (1985), no.~2, pp.~249--275.

\bibitem{Sl83}
{\sc M.~Slemrod}, {\em Admissibility criteria for propagating phase boundaries
  in a van der {W}aals fluid}, Arch. Ration. Mech. Anal. \textbf{81} (1983),
  no.~4, pp.~301--315.

\bibitem{Szk20a}
{\sc Y.~Suzuki}, {\em A {GENERIC} formalism for {K}orteweg-type fluids: {I}.
  {A} comparison with classical theory}, Fluid Dyn. Res. \textbf{52} (2020),
  no.~1, pp.~015516, 28.

\bibitem{Szk20b}
{\sc Y.~Suzuki}, {\em A {GENERIC}
  formalism for {K}orteweg-type fluids: {II}. {H}igher-order models and
  relation to microforces}, Fluid Dyn. Res. \textbf{52} (2020), no.~2,
  pp.~025510, 13.

\bibitem{TWX12}
{\sc Z.~Tan, H.~Wang, and J.~Xu}, {\em Global existence and optimal {$L^2$}
  decay rate for the strong solutions to the compressible fluid models of
  {K}orteweg type}, J. Math. Anal. Appl. \textbf{390} (2012), no.~1,
  pp.~181--187.

\bibitem{TZh14}
{\sc Z.~Tan and R.~Zhang}, {\em Optimal decay rates of the compressible fluid
  models of {K}orteweg type}, Z. Angew. Math. Phys. \textbf{65} (2014), no.~2,
  pp.~279--300.

\bibitem{TXKV15}
{\sc L.~Tian, Y.~Xu, J.~G.~M. Kuerten, and J.~J.~W. van~der Vegt}, {\em A local
  discontinuous {G}alerkin method for the (non)-isothermal
  {N}avier-{S}tokes-{K}orteweg equations}, J. Comput. Phys. \textbf{295}
  (2015), pp.~685--714.

\bibitem{UDK12}
{\sc Y.~Ueda, R.~Duan, and S.~Kawashima}, {\em Decay structure for symmetric
  hyperbolic systems with non-symmetric relaxation and its application}, Arch.
  Ration. Mech. Anal. \textbf{205} (2012), no.~1, pp.~239--266.

\bibitem{UDK18}
{\sc Y.~Ueda, R.~Duan, and S.~Kawashima}, {\em New structural
  conditions on decay property with regularity-loss for symmetric hyperbolic
  systems with non-symmetric relaxation}, J. Hyperbolic Differ. Equ.
  \textbf{15} (2018), no.~1, pp.~149--174.

\bibitem{vdW1894}
{\sc J.~D. van~der Waals}, {\em Thermodynamische {T}heorie der
  {K}apillarit\"{a}t unter {V}oraussetzung stetiger {D}ichte\"{a}nderung}, Z.
  Phys. Chem. \textbf{13} (1894), no.~1, pp.~657--725.

\bibitem{VoH72}
{\sc A.~I. Vol\cprime$\!\!$~pert and S.~I. Hudjaev}, {\em The {C}auchy problem
  for composite systems of nonlinear differential equations}, Math. USSR Sb.
  \textbf{16} (1972), no.~4, pp.~517--544.

\bibitem{WaYa14}
{\sc W.~Wang and L.~Yao}, {\em Vanishing viscosity limit to rarefaction waves
  for the full compressible fluid models of {K}orteweg type}, Commun. Pure
  Appl. Anal. \textbf{13} (2014), no.~6, pp.~2331--2350.

\bibitem{WaTa11}
{\sc Y.~Wang and Z.~Tan}, {\em Optimal decay rates for the compressible fluid
  models of {K}orteweg type}, J. Math. Anal. Appl. \textbf{379} (2011), no.~1,
  pp.~256--271.

\bibitem{We49}
{\sc H.~Weyl}, {\em Shock waves in arbitrary fluids}, Comm. Pure Appl. Math.
  \textbf{2} (1949), pp.~103--122.

\bibitem{ZhTa14}
{\sc X.~Zhang and Z.~Tan}, {\em Decay estimates of the non-isentropic
  compressible fluid models of {K}orteweg type in {$R^3$}}, Commun. Math. Sci.
  \textbf{12} (2014), no.~8, pp.~1437--1456.

\end{thebibliography}
\end{document}